\let\ORIlabel\label
\let\ORIrefstepcounter\refstepcounter
\AddToHook{package/hyperref/before}{%
   \let\label\ORIlabel
   \let\refstepcounter\ORIrefstepcounter}
\documentclass{siamart220329}
\usepackage[
  letterpaper,
  left=1in,
  right=1in,
  top=0.9in,
  bottom=1in
]{geometry}
\usepackage{tabularx,booktabs,makecell}
\newcolumntype{Y}{>{\raggedright\arraybackslash}X}
\usepackage{graphicx}
\usepackage{bm}
\usepackage{comment}

\Crefname{equation}{}{}
\usepackage{xcolor}

\usepackage{amsmath,amssymb,fixmath}
\usepackage{mathtools}
\DeclareMathOperator*{\argmin}{arg\,min}

\DeclareMathOperator{\proj}{proj}
\DeclareMathOperator{\prox}{prox}
\DeclareMathOperator{\lnit}{lnit}
\DeclareMathOperator{\sig}{sigm}

\DeclareMathOperator{\CVaR}{CVaR}

\newcommand{\R}{\mathbb{R}}
\newcommand{\E}{\mathbb{E}}

\newcommand{\cX}{{\mathcal X}}

\newcommand{\ind}{{\bf{1}}} 

\def\cvar{{\sf CVaR}}
\def\var{{\sf VaR}}

\newtheorem{remark}{Remark}[section]
\crefname{remark}{remark}{remarks}
\newtheorem{example}{Example}[section]
\crefname{example}{example}{examples}

\usepackage[ruled,vlined,algo2e]{algorithm2e}
\crefname{equation}{}{}
\crefname{algocf}{algorithm}{algorithms}

\author{
	Will~Asness\thanks{\protect
        Division of Applied Mathematics,
        Brown University,
        Providence, RI 02912 USA
        (\email{will\_asness@brown.edu} and \email{brendan\_keith@brown.edu}).
    }
    \and
    Brendan~Keith\footnotemark[2]
    \and
    Boyan~Lazarov\thanks{\protect
         Center for Design Optimization,
         Lawrence Livermore National Laboratory,
         Livermore, CA 94550
         (\email{lazarov2@llnl.gov})
    }
    \and
    Anton~Malandii\footnotemark[2]\ \thanks{\protect
         Department of Applied Mathematics and Statistics,
         Stony Brook University,
         Stony Brook, NY 11794
         (\email{anton.malandii@stonybrook.edu} and \email{stanislav.uryasev@stonybrook.edu})
    }
    \and
    Stan~Uryasev\footnotemark[4]
}

\begin{document}

\title{
    Exponential Adaptive Smoothing and Importance sampling for optimization of the conditional value-at-risk
    \thanks{Submitted to the editors \today.
        \funding{This work was performed under the auspices of the U.S.~Department of Energy by Lawrence Livermore National Laboratory under Contract DE-AC52-07NA27344 and the LLNL-LDRD Program under Project tracking No.~25-ERD-030 and ~22-ERD-009. Release number LLNL-JRNL-2020035.
        WA, AM, and BK were supported in part by the U.S.\ Department of Energy, Office of Science Early Career Research Program under Award Number DE-SC0024335 and by the Center for Information Geometric Mechanics and Optimization (CIGMO), a PSAAP-IV Focused Investigatory Center funded by the U.S.\ Department of Energy, National Nuclear Security Administration under Award Number DE-NA0004261.
        BK was also supported in part by the Alfred P.\ Sloan Foundation via a Sloan Research Fellowship in Mathematics.
        }
    }
}

\maketitle

\begin{abstract}
    We present a novel method for solving conditional value-at-risk (CVaR) optimization problems based on the dual representation of CVaR, which is defined as the worst-case expectation over a risk envelope.
    The method is based on the Bregman proximal point algorithm and alternates between stochastic primal and dual stages.
    Every (inner) primal stage involves a subproblem solved by sampling from a probability distribution updated at each dual stage (outer iteration).
    The likelihood ratio of the dual probability distributions relative to the distribution underlying the original problem converges to the risk identifier of the solution's CVaR.
    Thus, the dual distribution provides the algorithm with a built-in importance sampling mechanism that draws from the tail of the underlying distribution.
    Because only samples in the tail influence the CVaR, and samples outside the tail are drawn with decreasing probability, the algorithm delivers exceptional performance over other stochastic approximation methods.
    We prove the convergence of the algorithm for convex objective functions.
    Our numerical experiments target representative problems in financial mathematics and machine learning,
    focusing on portfolio optimization and support-vector machines, 
    respectively.

\end{abstract}

\begin{keywords}
  Conditional value-at-risk, stochastic optimization, importance sampling, Bregman divergence, proximal point, financial mathematics, machine learning
\end{keywords}

\begin{AMS}
  65K05, 90C15, 90C47, 90C90, 91G70
\end{AMS}

\section{Introduction and motivation}

Decision-making under uncertainty arises across various disciplines, such as engineering design, defense planning, finance, and modern machine learning.
In such settings, uncertainty is commonly modeled probabilistically, and its impact on system performance is quantified via risk measures \cite{royset2024riskadaptiveapproachesstochasticoptimization,quadrangle, quadrangle2}.
Risk-averse optimization then seeks decisions that trade off average (risk-neutral) and worst-case (robust) performance across possible states (scenarios) \cite{tyrrell2015engineering,royset2024riskadaptiveapproachesstochasticoptimization,kodakkal2022risk}.

Low-probability, high-consequence events are often modeled by specifying a confidence level $\alpha \in (0,1)$ and optimizing the associated tail expectation; i.e., the conditional value-at-risk (CVaR) \cite{rockafellar2000optimization}.
Two difficulties are central in CVaR optimization.
First, CVaR is, in general, nonsmooth, which typically leads to subgradient-based algorithms \cite{Shor1998Nondifferentiable} whose convergence can be slow in the sense of modern complexity theory \cite{nesterov2018lectures}.
Second, when the underlying distribution is accessed only through sampling, stochastic approximation schemes may produce high-variance subgradient estimates; the issue becomes particularly severe as $\alpha \uparrow 1$, since most samples contribute nothing to the tail expectation.

\subsection{Related work on adaptive importance sampling for CVaR optimization}

The high variance of Monte Carlo and stochastic approximation estimators for tail-risk objectives has motivated a substantial line of work on importance sampling for VaR and CVaR estimation.
Bardou, Frikha, and Pagès~\cite{BardouFrikhaPages2009} proposed stochastic approximation schemes for estimating VaR and CVaR and combined them with adaptive unconstrained importance sampling.
Their approach targets the estimation problem and uses recursive updates of the change of measure to reduce the variance of tail estimators.
Related ideas also appear in risk-sensitive simulation and reinforcement learning.
Prashanth~\cite{Prashanth2014} developed policy-gradient methods for CVaR-constrained Markov decision processes and incorporated importance-sampling-based variance reduction, while Tamar, Glassner, and Mannor~\cite{TamarGlassnerMannor2015} studied sampling-based optimization of CVaR and likelihood-ratio estimators for CVaR gradients.

More recent work has further emphasized the difficulty of learning an efficient sampling distribution when the relevant tail region depends on an unknown optimizer.
Deo and Murthy~\cite{DeoMurthy2021} developed black-box importance-sampling procedures for VaR and CVaR estimation, where the sampling distribution is constructed adaptively from less rare samples.
He, Jiang, Lam, and Fu~\cite{HeJiangLamFu2024} studied adaptive importance sampling for stochastic root finding and quantile estimation, highlighting the circular dependence between the target solution and an effective change of measure.
Closest in spirit to the present work, Pieraccini and Vanzan~\cite{PieracciniVanzan2025} proposed an adaptive importance sampling algorithm for risk-averse optimization in which both the sample size and the sampling distribution are updated during the optimization process.

Our approach is complementary to this literature.
Rather than constructing an external change of measure for the underlying uncertainty distribution, we exploit the dual representation of CVaR~\cite{rockafellar2006generalized} and update a dual probability distribution over scenarios within the CVaR risk envelope.
This distribution serves a twofold purpose: it defines the adaptive smoothing of the primal subproblem and induces an adaptive sampling mechanism that concentrates computational effort on tail-relevant scenarios.
Thus, the sampling distribution is not learned as a separate variance-reduction device, but is coupled directly to the minimax structure of the CVaR objective.

Motivated by this perspective, this paper proposes a framework that addresses \emph{both} difficulties (nonsmoothness of the CVaR and high-variance of its subgradient estimates) simultaneously by working with a dual representation of CVaR.
At each outer iteration, we (i) solve an \emph{adaptively smoothed} stochastic optimization subproblem in the decision variable, and (ii) update an \emph{adaptive sampling distribution} that increasingly concentrates on tail scenarios.
Thus, the sampling distribution is learned hand-in-hand with the decision variable rather than prescribed in advance.

\subsection{Background}
Let $(\Omega, \mathcal{A}, \mathbb{P})$ be a discrete probability space, where $\Omega = \{\omega_1, \ldots, \omega_n\}$ denotes the sample space, $\mathcal{A} = 2^\Omega$ denotes the set of outcomes, and $\mathbb{P}$ denotes the probability measure with associated probability mass function $\bm{p} = (p_1, \ldots, p_n)$ having $p_i \coloneqq \mathbb{P}(\omega_i)$ for each $i =1,\ldots,n$.
We consider the stochastic optimization problem
\begin{equation}\label{eq:cvar_opt}
    \min_{\mathbold{x}\in\mathcal{X}}~\cvar_\alpha\bigl(F(\mathbold{x},\omega)\bigr),
\end{equation}
where $\mathcal{X}\subseteq\mathbb{R}^d$ is closed, convex, and nonempty, $F:\mathcal{X}\times\Omega\to\mathbb{R}$ is convex in $\mathbold{x}$ for each $\omega\in\Omega$, and $\cvar_\alpha:L^1(\Omega)\to(-\infty,\infty]$ denotes the CVaR risk measure at confidence level $\alpha\in(0,1)$.
For any distribution $\bm{r}$ on $(\Omega, \mathcal{A})$, we write
\[
    \E_{\bm{r}}[F(\mathbold{x},\omega)]
    =\sum_{i=1}^n F(\mathbold{x},\omega_i)r_i
    \eqqcolon \sum_{i=1}^n F_i(\mathbold{x})r_i.
\]

Rockafellar and Uryasev \cite{rockafellar2000optimization} showed that CVaR admits the primal representation
\begin{equation}\label{eq:CVaRPrimal}
    \cvar_\alpha\bigl(F(\mathbold{x}, \omega)\bigr)
    =
    \min_{t \in \mathbb{R}}
    \left\{
    t + \frac{1}{1-\alpha}\E_{\bm{p}}\bigl[(F(\mathbold{x}, \omega) - t)_+\bigr]
    \right\},
\end{equation}
where $(\cdot)_+=\max\{0,\cdot\}$.
This formulation is widely used in software packages \cite{mathworks2024cvar,PSG,riskfolio} because it recasts the original problem as a standard (expectation-based) stochastic optimization problem, adding a single auxiliary scalar variable $t$ to the original decision space $\mathcal{X}$.
However, two drawbacks --- both associated with the partial moment function
$
\E_{\bm{p}}\bigl[(F(\mathbold{x}, \omega) - t)_+\bigr]
$ --- are intrinsic to~\cref{eq:CVaRPrimal}.

First, the partial moment function
is nonsmooth in $(\mathbold{x},t)$ due to $(\cdot)_+$. Thus, from the standpoint of algorithmic optimization theory, standard subgradient-based methods suffer from slow theoretical and practical convergence rates \cite{nesterov2018lectures}. In practice, state-of-the-art variable-metric subgradient methods such as Shor's $r$-algorithm \cite{Shor1998Nondifferentiable} often exhibit substantially faster convergence than standard subgradient schemes on a variety of academic and practical problems. However, the convergence theory for these methods remains incomplete, while extensions to stochastic settings are limited and inefficient.
Classical smoothing ideas \cite{nesterov2018lectures} can significantly improve theoretical convergence rates in nonsmooth optimization, but the resulting smoothing mechanisms are typically nonadaptive and ambiguous, thereby limiting their practical appeal.

Second, when $\bm{p}$ is accessed only via sampling, the associated stochastic subgradients exhibit high variance.
In particular, an increasing fraction of samples satisfy $F(\mathbold{x},\omega)\le t$ as $\alpha$ approaches $1$.
Such samples neither contribute to the objective nor to its (sub)gradient, yet they still incur computational cost.
This limits the efficiency of both sample-average approximation (SAA) and stochastic approximation (SA) schemes in high-confidence (i.e., $\alpha \geq 0.95$) CVaR optimization \cite{robbins1951stochastic,royset2024riskadaptiveapproachesstochasticoptimization}.
Developing a general framework that mitigates \emph{both} the nonsmoothness and sampling inefficiencies of the primal representation~\cref{eq:CVaRPrimal} is the main goal of this paper.

\subsection{Contributions}
Our principal contribution is an algorithmic framework that combines smoothing and adaptive importance sampling via a dual, saddle-point view of CVaR.
Specifically, using the dual representation \cite{rockafellar2006generalized,artzner1999coherent},
\begin{equation}\label{eq:cvardual2}
 \cvar_\alpha\big(F(\mathbold{x},\omega)\big)
    = \max_{\bm{q}\in\mathcal{Q}}~\E_{\bm{q}}\big[F(\mathbold{x},\omega)\big],
\end{equation}
where
\[
\mathcal{Q}
:=
\Big\{
\bm{q}\in \mathcal{Q}_\alpha:
\mathbf{1}^\top \bm{q}=1
\Big\}, \qquad \mathcal{Q}_\alpha:= \Big\{\bm{q}\in \mathbb{R}^n: \bm{0}\le \bm{q}\le \frac{\bm{p}}{1-\alpha}\Big\},
\]
we rewrite~\cref{eq:cvar_opt} as the saddle-point problem
\begin{equation}\label{eq:cvarsaddlepoint}
    \min_{\mathbold{x}\in\mathcal{X}}\ \max_{\bm{q}\in\mathcal{Q}}~\E_{\bm{q}}[F(\mathbold{x},\omega)].
\end{equation}

The maximizer(s) $\bm{q}^\star = \bm{q}^\star(\mathbold{x})$ in~\cref{eq:cvarsaddlepoint}, while usually expensive to compute, characterize the \emph{ideal} tail-focused sampling distribution(s) associated to the decision variable $\mathbold{x}$.
These optimal distributions saturate the upper bound $p_i/(1-\alpha)$ in tail scenarios, with almost all other probabilities equal to zero.\footnote{Fractional mass scenarios can appear at the $\alpha$-quantile, which is also the source of possible non-uniqueness; see, e.g., \cite{RoysetWets2021}.}
This structure is useful for sample estimation but problematic for sampling during optimization.
Indeed, the inherent sparsity of $\bm{q}^\star$ can prevent exploration, leading to prematurely ``locking in'' to a potentially incorrect tail set and making it impossible to accurately update the iterates $\mathbold{x}^k$ thereafter.
A similar conclusion can be drawn if~\cref{eq:cvarsaddlepoint} is treated by quadratic epi-regularization, as proposed in~\cite{kouri2020epi,kouri2022primal}. Therefore, a clear challenge (and objective) is ensuring that every sampling distribution $\bm{q}^k \approx \bm{q}^\star(\mathbold{x}^k)$ belongs to the \emph{interior} of $\mathcal{Q}$.

We design an interior-preserving update rule for $\bm{q}^k$ by applying a (block) Bregman proximal point method to~\cref{eq:cvarsaddlepoint}, with a divergence tailored to the box constraints in $\mathcal{Q}$.
The divergence is generated by a generalized Fermi--Dirac entropy (introduced in \Cref{sec:AlgorithmDerivation}), which acts as a weak barrier-like regularizer for
$\mathcal{Q}_\alpha$, always yielding iterates $\bm{q}^k\in\operatorname{int}\mathcal{Q}$.

\medskip
\noindent\textbf{Main contributions.}
\medskip
\begin{itemize}
    \item \textit{Adaptive smoothing.}
    We introduce a generalized Fermi--Dirac entropy and its associated Bregman divergence to regularize the CVaR envelope $\mathcal{Q}$ and keep $\bm{q}^k$ in its interior. This regularization yields an adaptive CVaR smoothing mechanism within the Bregman proximal point framework.
    \item \textit{Adaptive importance sampling.}
    At each outer iteration, we update $\bm{q}^k$ via a closed-form block Bregman proximal step. This produces an adaptive importance sampler for the inner subproblem that increasingly concentrates on tail scenarios, $\bm{q}^k \to \bm{q}^\star$.
    \item \textit{Theory and practice.}
    We establish convergence guarantees (including variants with inexact inner solves) and demonstrate performance on portfolio optimization, support vector classification, and topology optimization.
\end{itemize}

\medskip
\noindent
A compact version of the proposed Exponential Adaptive Smoothing and Importance Sampling Technique (\textsc{EASIeST}) is given in \Cref{alg:ShortAlg}. A full derivation with precise definitions of the block proximal operator and the smoothed subproblem appears in \Cref{sec:AlgorithmDerivation}.

\begin{algorithm2e}[h]
\DontPrintSemicolon
\caption{\label{alg:ShortAlg} \textsc{EASIeST} for CVaR optimization.}
\SetKwInOut{Input}{Input}
\SetKwInOut{Output}{Output}
\Input{initial $\bm{q}^0 \in \operatorname{int}\mathcal{Q}$, $k=0$.}
\Output{an approximate saddle point $(\mathbold{x}^\star,\bm{q}^\star)\in\mathcal{X}\times\mathcal{Q}$.}
\Repeat{\textsl{converged}}{
\textit{Step 1.} Generate a block $\mathcal{B}_k \subseteq \{1,\ldots,n\}$ with $|\mathcal{B}_k|\ge 2$.\;
\BlankLine
\textit{Step 2.} Compute an (approximate) minimizer $\mathbold{x}_k^\star$ of the smoothed CVaR subproblem associated with $(\bm{q}^k,\gamma_k,\mathcal{B}_k)$ (cf. \eqref{eq:block_epi_cvar}).\;
\BlankLine
\textit{Step 3.} Update $\bm{q}^{k+1} = \operatorname{prox}^{\mathcal{B}_k}_{\gamma_k \varphi}(\bm{q}^k)$ (cf. \cref{eq:Prox_Formula_block}).\;
\BlankLine
$k\leftarrow k+1$.\;
}
\end{algorithm2e}

We briefly comment on Steps~1--2.
The block-generation procedure (Step~1) is described in \Cref{sub:Complete_algorithm}. For now, we note that the blocks are randomly generated by sampling from $\bm{q}^k$. This plays a central role in the method's efficiency and convergence.
Step~2 can be implemented using either the SA or the SAA methods, depending on the application and computational budget. The expression for the epi-smoothed CVaR is derived in \Cref{sub:BregmanForCVaR}. A variant of the SA scheme for computing $\bm{x}_k^\star$ is provided in \Cref{sub:Complete_algorithm}. Practical guidance (including importance sampling using $\bm{q}^k$ and inexactness heuristics) is provided in \Cref{sec:PracticalImplementation}.

\subsection{Paper organization}
\Cref{sec:AlgorithmDerivation} derives the method from a Bregman proximal point view of the saddle-point formulation and introduces the generalized Fermi--Dirac entropy and the resulting block proximal update.
\Cref{sec:Analysis} establishes convergence guarantees.
\Cref{sec:PracticalImplementation} discusses implementation details, including hyperparameter selection, adaptive step-size, and inexactness considerations.
\Cref{sec:NumericalExperiments} reports numerical results in portfolio optimization, support vector classification, and topology optimization.
\Cref{sec:Conclusion} concludes.


\section{Algorithm derivation}
\label{sec:AlgorithmDerivation}

This section derives~\Cref{alg:ShortAlg}. We begin by introducing the essential aspects of the Bregman proximal point method \cite{chen1993convergence, teboulle2018simplified} in the context of our CVaR optimization problem (\Cref{sub:Bregman_divergences,sub:Bregman_P_P}). Next, we introduce the concept of Bregman epi-smoothing of CVaR (\Cref{sub:BregmanEpiReg}).
Relying on the results derived in the aforementioned sections, we rewrite CVaR optimization as a maximization problem over scenario weights $\bm{q}$ on the set $\mathcal{Q}$ and derive a closed-form Bregman proximal step, which reduces each outer update to solving a smoothed CVaR subproblem in the primal variable $\bm{x}$ (\Cref{sub:BregmanForCVaR}).

\subsection{Bregman divergences}\label{sub:Bregman_divergences}

Bregman divergences form a broad class of dissimilarity measures for $n$-dimensional vectors. Introduced by L.M.\ Bregman \cite{BREGMAN1967200}, these divergences generalize the squared Euclidean distance and have since become fundamental tools in a range of scientific and engineering disciplines.
To define a Bregman divergence, we begin with its generating function.
\begin{definition}[Legendre function] Let $\mathcal{P} \subset \mathbb{R}^n$ be closed convex with non\-empty interior. Then a convex function \( \psi: \mathbb{R}^n \to (-\infty, \infty] \) with value $+\infty$ outside $\mathcal{P}$ is Legendre if
\begin{itemize}
    \item[(i)] the restriction of $\psi$ to $\mathcal{P}$ is continuous;
    \item[(ii)] $\psi$ is essentially smooth: continuously differentiable on the interior of its domain, $\operatorname{int} \mathcal{P}$, such that for all $\bm{p} \in \operatorname{bd}\mathcal{P}$ and $\bm{q} \in \operatorname{int}\mathcal{P}$,
    $$\lim_{t \to 0, t>0} \bigl\langle\nabla \psi\bigl(\bm{p} + t(\bm{q}-\bm{p})\bigr),  \bm{q}-\bm{p}\bigr\rangle = -\infty;
    $$
    \item[(iii)] $\psi$ is strictly convex on the interior of its domain, $\operatorname{int}\mathcal{P}$.
\end{itemize}  
\end{definition}

For any such Legendre function $\psi$, the associated Bregman divergence \( D_\psi(\mathbold{p}, \mathbold{q}) \) between two points $\mathbold{p} \in \mathcal{P}$ and $\mathbold{q} \in \operatorname{int} \mathcal{P}$ is defined as:
\begin{equation}\label{eq:bregmandivgen}
D_\psi(\mathbold{p}, \mathbold{q}) \coloneqq \psi(\mathbold{p}) - \psi(\mathbold{q}) - \langle \nabla \psi(\mathbold{q}), \mathbold{p} - \mathbold{q} \rangle
\,,
\end{equation}
where \( \langle \cdot, \cdot \rangle \) represents the inner product in \( \mathbb{R}^n \), and $D_\psi(\mathbold{p}, \mathbold{q}) \coloneqq +\infty$ for all other $\bm{p},\bm{q} \in \R^n$.

One of the significant features of this divergence is its separable form,
\begin{equation}\label{eq:bregmandivsep}
    D_\psi(\bm{p}, \bm{q}) = \sum_{i=1}^n \left[\psi_i(p_i) - \psi_i(q_i) -\psi_i'(q_i)(p_i-q_i)\right]
    \,,
\end{equation}
that appears when $\psi(\bm{q}) \coloneqq \sum_{i=1}^n \psi_i(q_i)$ is defined as a sum of component  functions $\psi_i \colon \mathbb{R} \to \mathbb{R}$.
For instance, the functions $\psi_i(x) = \frac{1}{2}(x-1)^2$ yield the one half the standard squared Euclidean distance
\begin{equation}
\label{eq:MoreauDivergence}
D_\psi(\bm{p}, \bm{q}) = \frac{1}{2}\sum_{i=1}^n (p_i-q_i)^2
\,.
\end{equation}

Bregman distances have been extensively utilized and studied in optimization theory \cite{nemirovski1983interior, chen1993convergence, csiszar2008minimization, csiszar2009minimization, laude2026allroadsrome} in their general form~\cref{eq:bregmandivgen}. On the other hand, in information theory and statistics, they are typically considered in the separable form~\cref{eq:bregmandivsep}, where vectors $\bm{p}$ and $\bm{q}$ represent generalized distributions (finite discrete measures) \cite{csiszar1991least, csiszar1994maximum,csiszar1995generalized}.
As demonstrated below, we use the Bregman divergence to construct a distance function on a subset of a probability simplex, wherein the separable form is convenient for us.
The next section utilizes a Bregman divergence within the framework of the proximal point method.

\subsection{Bregman proximal point}\label{sub:Bregman_P_P}

Bregman proximal point generalizes the classical proximal point method of Martinet \cite{martinet1970regularisation} and Rockafellar \cite{rockafellar1976monotone}. For a convex objective $\varphi:\mathcal{Q}\to(-\infty,\infty]$, it generates a sequence $\{\bm{q}^k\}_{k\in\mathbb{N}}$ in $\mathcal{Q}$ via
\begin{equation}\label{eq:ProxPointAlg}
\bm{q}^0\in\operatorname{int}\operatorname{dom}\psi,\qquad
\bm{q}^{k+1}=\prox_{\gamma_k\varphi}(\bm{q}^k),\qquad k=0,1,\ldots,
\end{equation}
where $\{\gamma_k\}\subset(0,\infty)$ satisfies $\sum_{k=0}^\infty \gamma_k=+\infty$, and the (Bregman) proximal operator is
\begin{equation}\label{eq:ProxOperator}
\prox_{\gamma\varphi}(\bm{r})
\coloneqq
\argmin_{\bm{q}\in\mathcal{Q}}
\Big\{\gamma \varphi(\bm{q})+D_\psi(\bm{q},\bm{r})\Big\}.
\end{equation}
When $\psi(\bm{q})=\tfrac12\|\bm{q}\|_2^2$ (cf.~\eqref{eq:MoreauDivergence}), one recovers the classical proximal point method.

\paragraph{Property 1 (function-value convergence)}
The following standard estimate will be used repeatedly.

\begin{theorem}[Theorem 3.4 in \cite{chen1993convergence}]
\label{thm:BregmanConvergence}
Assume $\varphi:\R^n\to(-\infty,\infty]$ is proper, lower semicontinuous, and convex. Then the iterates~\eqref{eq:ProxPointAlg} satisfy
\begin{equation}\label{eq:BregmanConvergence}
\varphi(\bm{q}^k)-\varphi(\bm{q}^\star)
\le
\frac{D_\psi(\bm{q}^\star,\bm{q}^0)}{\sum_{i=0}^{k-1}\gamma_i},
\end{equation}
where $\bm{q}^\star\in\argmin\limits_{\bm{q}\in\mathcal{Q}}\varphi(\bm{q})$.
\end{theorem}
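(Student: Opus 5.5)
The plan is the standard Chen--Teboulle argument. It rests on two facts: the optimality condition of the proximal step, and the three-point identity for Bregman divergences. Fix $\bm{q}^\star\in\argmin_{\mathcal{Q}}\varphi$; if $\varphi(\bm{q}^\star)=+\infty$ there is nothing to prove, so assume $\varphi(\bm{q}^\star)<\infty$.

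\textbf{Step 1: optimality condition.} Since $\psi$ is Legendre, the essential smoothness in (ii) makes the minimizer in \eqref{eq:ProxOperator} lie in $\operatorname{int}\operatorname{dom}\psi$. Hence $\nabla\psi(\bm{q}^{k+1})$ exists. The first-order condition for \eqref{eq:ProxOperator}, with $\varphi$ proper, lsc and convex, gives
\[
\bm{g}^{k+1}\coloneqq \gamma_k^{-1}\bigl(\nabla\psi(\bm{q}^k)-\nabla\psi(\bm{q}^{k+1})\bigr)\in\partial\varphi(\bm{q}^{k+1}),
\]
where the indicator of $\mathcal{Q}$ is absorbed into $\varphi$. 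This step requires a constraint qualification for the subdifferential sum rule; I expect it to be the main technical obstacle. I would first handle it by appealing to the interiority of the iterate and the convexity of $\psi$, using $\operatorname{dom}\varphi\cap\operatorname{int}\operatorname{dom}\psi\neq\emptyset$.

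\textbf{Step 2: the basic inequality.} Apply the subgradient inequality at an arbitrary $\bm{u}\in\mathcal{Q}$, together with the three-point identity
\[
\langle\nabla\psi(\bm{q}^k)-\nabla\psi(\bm{q}^{k+1}),\bm{u}-\bm{q}^{k+1}\rangle = D_\psi(\bm{u},\bm{q}^{k+1})-D_\psi(\bm{u},\bm{q}^k)+D_\psi(\bm{q}^{k+1},\bm{q}^k).
\]
Combining the two yields
\[
\gamma_k\bigl(\varphi(\bm{q}^{k+1})-\varphi(\bm{u})\bigr)\le D_\psi(\bm{u},\bm{q}^k)-D_\psi(\bm{u},\bm{q}^{k+1})-D_\psi(\bm{q}^{k+1},\bm{q}^k).
\]

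\textbf{Step 3: monotonicity.} Take $\bm{u}=\bm{q}^k$. Then $\gamma_k\bigl(\varphi(\bm{q}^{k+1})-\varphi(\bm{q}^k)\bigr)\le -D_\psi(\bm{q}^k,\bm{q}^{k+1})-D_\psi(\bm{q}^{k+1},\bm{q}^k)\le 0$, so the sequence $\varphi(\bm{q}^k)$ is nonincreasing.

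\textbf{Step 4: telescoping.} Take $\bm{u}=\bm{q}^\star$, drop the nonpositive term $-D_\psi(\bm{q}^{j+1},\bm{q}^j)$, and sum over $j=0,\dots,k-1$:
\[
\sum_{j=0}^{k-1}\gamma_j\bigl(\varphi(\bm{q}^{j+1})-\varphi(\bm{q}^\star)\bigr)\le D_\psi(\bm{q}^\star,\bm{q}^0)-D_\psi(\bm{q}^\star,\bm{q}^k)\le D_\psi(\bm{q}^\star,\bm{q}^0).
\]
By Step 3, each $\varphi(\bm{q}^{j+1})\ge\varphi(\bm{q}^k)$, so the left-hand side is at least $\bigl(\sum_{j=0}^{k-1}\gamma_j\bigr)\bigl(\varphi(\bm{q}^k)-\varphi(\bm{q}^\star)\bigr)$. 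Dividing by $\sum_{j=0}^{k-1}\gamma_j$ gives \eqref{eq:BregmanConvergence}.
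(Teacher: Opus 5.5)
Your proposal is correct and follows essentially the same route as the paper's proof in the appendix. Both combine the first-order optimality condition of the proximal step with the three-point identity and the subgradient inequality, then telescope using monotonicity of $\varphi(\bm{q}^k)$. The only differences are that the paper gets monotonicity directly by comparing the proximal objective at $\bm{q}^{k+1}$ and at $\bm{q}^k$, which needs no subgradients, whereas you set $\bm{u}=\bm{q}^k$ in your basic inequality; and that you explicitly address the interiority and constraint-qualification issue behind the optimality condition, which the paper takes for granted.
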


\noindent\textit{Proof sketch.}
The result follows from (i) optimality of $\bm{q}^{k+1}$ in~\eqref{eq:ProxOperator}, (ii) the three-point identity for $D_\psi$, and (iii) the subgradient inequality for $\varphi$, yielding a telescoping bound on $D_\psi(\bm{q}^\star,\bm{q}^k)$ and hence~\eqref{eq:BregmanConvergence}.
The full proof is included in~\Cref{app:thm_bregman_convergence} for completeness.

\begin{definition}[Linear convergence rates]
We say that a sequence $\{\bm{x}^k\}_{k\in\mathbb{N}}$ converges to $\bm{x}^\star$ with order $q\ge 1$ and rate $r\ge 0$ if
\[
\lim_{k\to\infty}\frac{\|\bm{x}^{k+1}-\bm{x}^\star\|}{\|\bm{x}^k-\bm{x}^\star\|^q}=r.
\]
If $q=1$ and $r\in(0,1)$, then $\bm{x}^k$ converges \emph{$\mathrm{Q}$-linearly} to $\bm{x}^\star$.
If $\|\bm{x}^k-\bm{x}^\star\|\le \epsilon_k$ for all $k$, where $\epsilon_k$ converges $\mathrm{Q}$-linearly to $0$, then $\bm{x}^k$ converges \emph{$\mathrm{R}$-linearly} to $\bm{x}^\star$.
\end{definition}

\begin{corollary}
\label{cor:geomsteps}
If the step sizes grow geometrically, i.e., $\gamma_{k+1}=c\gamma^{k}$ for some $c>0$ and $\gamma>1$, then the values $\varphi(\bm{q}^k)$ from~\eqref{eq:ProxPointAlg} converge at least $\mathrm{R}$-linearly to $\varphi(\bm{q}^\star)$ with rate $1/\gamma$.
\end{corollary}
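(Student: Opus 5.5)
The plan is to substitute the geometric step sizes into the bound of \Cref{thm:BregmanConvergence} and read off an R-linear rate from the resulting geometric series. We interpret the hypothesis as $\gamma_k = c\gamma^k$ for all $k\ge 0$, with $c>0$ and $\gamma>1$. This choice automatically satisfies $\sum_k\gamma_k=+\infty$, so the standing assumption on the step sizes in \eqref{eq:ProxPointAlg} holds.

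The first step is to evaluate the denominator in \eqref{eq:BregmanConvergence}:
\[
\sum_{i=0}^{k-1}\gamma_i = c\,\frac{\gamma^k-1}{\gamma-1}.
\]
For $k\ge 1$ we have $\gamma^k-1 \ge \gamma^k(1-1/\gamma) = \gamma^{k-1}(\gamma-1)$, so $\sum_{i=0}^{k-1}\gamma_i \ge c\gamma^{k-1}$. Together with $\varphi(\bm q^k)\ge\varphi(\bm q^\star)$, which holds because $\bm q^\star$ is a minimizer and $\bm q^k\in\mathcal Q$, this gives
\[
0\le \varphi(\bm q^k)-\varphi(\bm q^\star) \le \epsilon_k \coloneqq \frac{\gamma\,D_\psi(\bm q^\star,\bm q^0)}{c}\,\gamma^{-k}.
\]
Here $D_\psi(\bm q^\star,\bm q^0)<\infty$ because $\bm q^0\in\operatorname{int}\operatorname{dom}\psi$ and $\bm q^\star\in\mathcal Q$ lies in the domain where $\psi$ is finite and continuous.

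The second step is to check the definition of R-linear convergence. We have $\epsilon_{k+1}/\epsilon_k = 1/\gamma \in (0,1)$ for every $k$, so $\epsilon_k\to0$ Q-linearly with rate $1/\gamma$. There is one degenerate case: if $D_\psi(\bm q^\star,\bm q^0)=0$, then $\bm q^0=\bm q^\star$ and the error is identically zero, which trivially satisfies the bound. Alternatively, we can replace $\epsilon_k$ by $\max\{\epsilon_k,\gamma^{-k}\}$ to keep the ratio well defined. Applying the definition to the scalar sequence $\varphi(\bm q^k)$ with limit $\varphi(\bm q^\star)$ then completes the argument.

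The main obstacle is essentially bookkeeping rather than analysis. We need to interpret the slightly garbled hypothesis ($\gamma_{k+1}=c\gamma^k$) consistently; an index shift only changes the constant. We also need to ensure that the constant in $\epsilon_k$ is finite, which is the point where the interior choice of $\bm q^0$ matters. The core estimate is entirely inherited from \Cref{thm:BregmanConvergence}.
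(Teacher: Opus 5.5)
Your proof is correct and follows the paper's route: you substitute the geometric step sizes $\gamma_k=c\gamma^k$ into the bound of \Cref{thm:BregmanConvergence} and exhibit a majorant that vanishes Q-linearly with rate $1/\gamma$. This is also the indexing the paper's own ratio computation implicitly uses. The only difference is cosmetic. The paper keeps the exact bound $\epsilon_k=D_\psi(\bm{q}^\star,\bm{q}^0)/\sum_{i<k}\gamma_i$ and shows $\epsilon_{k+1}/\epsilon_k=(\gamma^k-1)/(\gamma^{k+1}-1)\to 1/\gamma$. You instead lower-bound the partial sum by $c\gamma^{k-1}$, which gives an exactly geometric majorant (ratio identically $1/\gamma$) at the price of a constant factor of at most $\gamma/(\gamma-1)$, and you also handle the degenerate case $D_\psi(\bm{q}^\star,\bm{q}^0)=0$ that the paper's ratio silently divides through.
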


\begin{proof}
Define $\epsilon_k\coloneqq D_\psi(\bm{q}^\star,\bm{q}^0)/\sum_{i=0}^{k-1}\gamma_i$ and recall
$\sum_{j=0}^{k-1}\gamma^j=(1-\gamma^k)/(1-\gamma)$.
Then for $\gamma_{k+1}=c\gamma^k$,
\[
\frac{\epsilon_{k+1}}{\epsilon_k}
=
\frac{1-\gamma^k}{1-\gamma^{k+1}}
\to
\frac{1}{\gamma}
\qquad\text{as }k\to\infty.
\]
\end{proof}

\paragraph{Property 2 (interior iterates)}
If $\operatorname{dom}\psi=\mathcal{Q}$, then each iterate $\bm{q}^k$ remains in $\operatorname{int}\mathcal{Q}$.

\paragraph{Property 3 (convergence in iterates)}
One more key feature of the Bregman proximal point method is that the Bregman divergence generated by the Legendre function $\psi$ acts as a Lyapunov function along the iterates. In particular, if $\bm{q}^\star$ is any minimizer of the target convex objective and $\bm{q}^{k}$ is obtained from the Bregman proximal update, then the sequence $\{D_\psi(\bm{q}^\star,\bm{q}^k)\}_{k\ge 0}$ is nonincreasing and strictly decreases whenever $\bm{q}^k\neq \bm{q}^\star$. This Fej\'er-type monotonicity implies that $\{\bm{q}^k\}$ is bounded and asymptotically regular (i.e., $D_\psi(\bm{q}^{k+1},\bm{q}^k)\to 0$). Moreover, under the stronger assumption that $\psi$ is strongly convex
with respect to the norm $\|\cdot\|$ used to define the geometry of the method,
the corresponding Bregman divergence controls distances: $D_\psi(\bm{q},\bm{r})\ge c\,\|\bm{q}-\bm{r}\|^2$ for some $c>0$ on the relevant domain. Consequently, the Lyapunov decrease of $D_\psi(\bm{q}^\star,\bm{q}^k)$ yields $\|\bm{q}^k-\bm{q}^\star\|\to 0$, i.e., convergence of iterates in the norm with respect to which $\psi$ is strictly convex.
\begin{remark}[The choice of Legendre function]\label{rem:natural-psi-simplex}
When the iterates are probability distributions (e.g., $\bm q^k\in\mathcal{Q}$), the choice of Legendre function $\psi$ is not merely aesthetic: it determines the \emph{notion of proximity} enforced by the proximal step and, consequently, the qualitative behavior of the iterates. A Euclidean choice such as $\psi(\bm q)=\tfrac12\|\bm q\|_2^2$ induces the $\ell_2$ geometry, which can severely under-penalize redistributions of mass in high dimensions. Indeed, for
\[
\bm q=\Big(\tfrac1n,\ldots,\tfrac1n\Big),\qquad 
\bm r=\Big(\underbrace{0,\ldots,0}_{\alpha n},\tfrac{1}{(1-\alpha)n},\ldots,\tfrac{1}{(1-\alpha)n}\Big),
\]
one has $\tfrac12\|\bm q-\bm r\|_1=\alpha$, i.e., $\bm r$ differs from $\bm q$ by moving an $\alpha$-fraction of probability mass, which is a \emph{large} perturbation in a distributional sense. However,
\[
\|\bm q-\bm r\|_2=\sqrt{\frac{\alpha}{(1-\alpha)n}},
\]
which becomes small as $n$ grows (e.g., $\|\bm q-\bm r\|_2\approx 3\times 10^{-3}$ for $\alpha=0.9$ and $n=10^6$). Thus, an $\ell_2$-based proximal term may treat dramatically different distributions as “close,” allowing large mass transfers at negligible Euclidean cost. In contrast, entropy-type Legendre functions (e.g., the generalized Fermi--Dirac entropy) generate Bregman divergences that are aligned with the simplex geometry and better reflect statistically meaningful discrepancies (e.g., via Pinsker-type controls \cite{pinsker1964,tsybakov2009}). This is particularly important when the iterate $\bm q^k$ is employed as a sampling distribution (as done here): the natural geometry helps keep $\bm q^k$ in the interior and prevents premature degeneracy of the sampler.
\end{remark}

A potential drawback is that $\prox_{\gamma\varphi}$ is not always efficiently computable. However, for our CVaR dual set $\mathcal{Q}_\alpha$ and an appropriate choice of $\psi$, we can derive a convenient proximal operator in closed-form.

\subsection{Bregman epi-regularization of CVaR}
\label{sub:BregmanEpiReg}

Epi-smoothing (or epi-regularization) of extended real-valued convex functions (or functionals) via infimal convolution (with sufficiently smooth kernels) traces its origins to the foundational work of Moreau \cite{moreau1965proximite}. By choosing the squared Euclidean norm as a smoothing kernel, one recovers the Moreau envelope, also known as Moreau--Yosida regularization, a well-established tool in optimization and convex analysis. In optimization, Moreau--Yosida regularization gives rise to the proximal point algorithm, which plays a crucial role in our work.

Recently, Kouri and Surowiec \cite{kouri2020epi} applied epi-smoothing to convex risk measures and investigated the properties of the resulting measures. In turn, we apply the same technique to smooth CVaR with an appropriately chosen Bregman divergence. Such a divergence (as a smoothing kernel) allows us to \emph{(i)} smooth the CVaR adaptively; and \emph{(ii)} design an importance-sampling procedure, which is equally significant for optimization.
\smallskip

\paragraph{Epi-regularization in dual form}
A coherent risk measure $\mathcal{R}$ admits a dual representation as a supremum of expectations over a convex set of densities/likelihood ratios (see, e.g., \cite{quadrangle}). Epi-regularization adds a strongly convex penalty to this dual formulation. In our finite-sample setting, $X$ is represented by a vector $(X_1,\ldots,X_n)^\top$ and expectations reduce to weighted sums.

Concretely, let $\mathcal{P}\subset\R^n$ be a convex dual feasible set (for CVaR, $\mathcal{P}=\mathcal{Q}$). For a proper closed convex penalty $\psi$ and $\gamma>0$, define the dual-smoothed functional
\begin{equation}\label{eq:epiriskcoherentdual_minimal}
\widetilde{\mathcal{R}}^\gamma(X)
=
\sup_{\bm{q}\in\mathcal{P}}
\left\{
\E_{\bm{q}}[X]-\frac{1}{\gamma}\psi(\bm{q})
\right\}.
\end{equation}
For CVaR, we will apply this construction first to the associated \emph{coherent regret} \cite{quadrangle} (which has a simpler dual set) and then recover the risk via the standard regret-to-risk formula \cite{quadrangle}; we highlight this switch explicitly below.

\begin{example}[Exponential smoothing of CVaR]\label{ex:ExpSmoothCVaR}
Let $\mathcal{R}_\alpha(X)=\CVaR_\alpha(X)$. 

Then, cf.~\eqref{eq:cvardual2},
\begin{equation}\label{eq:cvardual3}
\mathcal{R}_\alpha(X)=\max_{\bm{q}\in\mathcal{Q}}\ \E_{\bm{q}}[X].
\end{equation}

\paragraph{Step 1}
The coherent regret associated with CVaR is
\[
\mathcal{V}_\alpha(X)=\frac{1}{1-\alpha}\E_{\bm{p}}[X_+],
\]
and it admits the dual representation
\begin{equation}\label{eq:cvarregretdual}
\mathcal{V}_\alpha(X)
=
\max_{\bm{q}\in\mathcal{Q}_\alpha}\ \langle \bm{q},X\rangle.
\end{equation}
Introduce the generalized Fermi--Dirac binary entropy
\begin{equation}\label{eq:FermiDirac}
\psi(\bm{q})
\coloneqq
\sum_{i=1}^n
\Big[
q_i\ln q_i+\Big(\frac{p_i}{1-\alpha}-q_i\Big)\ln\Big(\frac{p_i}{1-\alpha}-q_i\Big)
\Big],
\end{equation}
for $\bm{q}\in\mathcal{Q}_\alpha$, and set $\psi(\bm{q})=+\infty$ otherwise.
\begin{lemma}[Generalized Pinsker's inequality]\label{lem:gen-pinsker}
Let $\psi$ be the generalized Fermi--Dirac entropy and $D_{\mathrm{KL}}$ denote the Kullback--Leibler divergence. Then for any probability distributions $\bm{q}, \bm{r} \in \operatorname{int} \mathcal{Q}$, the following chain of inequalities holds:

$$
D_\psi(\mathbold{q},\mathbold{r})
\;\ge\; D_{\mathrm{KL}}(\mathbold{q},\mathbold{r})
\;\ge\; \frac12\|\mathbold{q}-\mathbold{r}\|_1^2 \, ,$$
and in particular
$$\psi(\bm{q}) \geq \psi(\bm{r}) + \langle  \nabla \psi(\bm{r}), \bm{q} -\bm{r} \rangle + \frac{1}{2}\|\mathbold{q}-\mathbold{r}\|_1^2 \, .
$$
\end{lemma}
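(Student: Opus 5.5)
Write $c_i \coloneqq p_i/(1-\alpha)$. The generalized Fermi--Dirac entropy is separable, so by \cref{eq:bregmandivsep} we can compute $D_\psi$ coordinatewise. With $\psi_i(s)=s\ln s+(c_i-s)\ln(c_i-s)$, we have $\psi_i'(s)=\ln s-\ln(c_i-s)$. Expanding $\psi_i(q_i)-\psi_i(r_i)-\psi_i'(r_i)(q_i-r_i)$ and collecting the logarithmic terms gives
\[
D_\psi(\bm{q},\bm{r})=\sum_{i=1}^n\Big[q_i\ln\frac{q_i}{r_i}+(c_i-q_i)\ln\frac{c_i-q_i}{c_i-r_i}\Big].
\]
This is a routine calculation. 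The linear terms cancel because the coefficients of $q_i$ and $c_i-q_i$ are paired.

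The first sum is exactly $D_{\mathrm{KL}}(\bm{q},\bm{r})=\sum_i q_i\ln(q_i/r_i)$. For probability vectors no extra linear term $-q_i+r_i$ appears, since $\mathbf{1}^\top\bm{q}=\mathbf{1}^\top\bm{r}=1$. So the first inequality reduces to showing that the complementary sum
\[
T\coloneqq\sum_i (c_i-q_i)\ln\frac{c_i-q_i}{c_i-r_i}
\]
is nonnegative. Set $a_i=c_i-q_i>0$ and $b_i=c_i-r_i>0$. Using $\ln t\ge 1-1/t$, we get
\[
T\ge\sum_i(a_i-b_i)=\sum_i(r_i-q_i)=0,
\]
where the last equality again uses that both vectors sum to one. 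Equivalently, $T$ is the generalized KL divergence between the positive measures $\bm a$ and $\bm b$, which have equal total mass $\sum_i c_i-1=\alpha/(1-\alpha)$. This step is the only place where the probability-simplex constraint is used in an essential way. I expect it to be the main point to get right, since without equal masses $T$ could be negative.

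The second inequality, $D_{\mathrm{KL}}(\bm{q},\bm{r})\ge\tfrac12\|\bm{q}-\bm{r}\|_1^2$, is the classical Pinsker inequality for probability distributions. I would cite it \cite{pinsker1964,tsybakov2009} rather than reprove it. If a self-contained argument were wanted, I would reduce to the two-point case by the data-processing inequality, applied to the partition $\{i:q_i\ge r_i\}$, and then verify the binary case by differentiating in one variable.

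Finally, the "in particular" statement follows by substituting the definition \cref{eq:bregmandivgen}, $D_\psi(\bm{q},\bm{r})=\psi(\bm{q})-\psi(\bm{r})-\langle\nabla\psi(\bm{r}),\bm{q}-\bm{r}\rangle$, into the combined inequality $D_\psi\ge\tfrac12\|\bm{q}-\bm{r}\|_1^2$ and rearranging. Since $\bm{q},\bm{r}\in\operatorname{int}\mathcal{Q}$, all logarithms are finite and $\nabla\psi(\bm{r})$ exists, so every step is justified.
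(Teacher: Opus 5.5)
Your proof is correct and follows the same route as the paper: it splits $D_\psi(\bm{q},\bm{r})$ into $D_{\mathrm{KL}}(\bm{q},\bm{r})$ plus the complementary sum $\sum_i (c_i-q_i)\ln\frac{c_i-q_i}{c_i-r_i}$, drops that sum as nonnegative, and then invokes Pinsker. The paper only asserts that the complementary sum is nonnegative, so your argument via $\ln t\ge 1-1/t$, which uses that both vectors have equal total mass, supplies the justification the paper leaves implicit.
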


\begin{proof}
For $\bm{q}, \bm{r} \in \operatorname{int} \mathcal{Q}$, define $D_{\mathrm{KL}}(\mathbold{q},\mathbold{r}): = \sum\limits_{i=1}^n q_i \ln \dfrac{q_i}{r_i}$, then (cf. \eqref{eq:Bregman_div_formula}) 
\begin{equation}\label{eq:pinsker_chain}
    \begin{split}
     D_\psi(\mathbold{q},\mathbold{r}) &= D_{\mathrm{KL}}(\mathbold{q},\mathbold{r}) + \sum_{i=1}^n\left(\dfrac{p_i}{1-\alpha} -q_i\right)\ln \dfrac{p_i/(1-\alpha) - q_i}{p_i/(1-\alpha) - r_i}\\
     &\geq D_{\mathrm{KL}}(\mathbold{q},\mathbold{r}) \, .
    \end{split}
\end{equation}
Therefore, applying Pinsker's inequality to \eqref{eq:pinsker_chain} completes the proof. 
\end{proof}

For $\gamma>0$, define the epi-regularized regret
\begin{equation}\label{eq:epiregret}
\widetilde{\mathcal{V}}^\gamma_\alpha(X)
=
\max_{\bm{q}\in\R^n}
\left\{
\langle \bm{q},X\rangle-\frac{1}{\gamma}\psi(\bm{q})
\right\}.
\end{equation}
Since $\psi(\bm{q})=+\infty$ outside $\mathcal{Q}_\alpha$,~\eqref{eq:epiregret} is an unconstrained concave maximization problem and the maximizer is unique.
A straightforward calculation yields
\begin{align}
\tilde{\bm{q}}^{\gamma}(X)
&=
\nabla\psi^{-1}(\gamma X)
=
\frac{\bm{p}}{1-\alpha}\,
\frac{\exp(\gamma X)}{1+\exp(\gamma X)}
\qquad\text{(component-wise)}, \\
\widetilde{\mathcal{V}}^\gamma_\alpha(X)
&=
\frac{1}{\gamma(1-\alpha)}\E_{\bm{p}}\ln\big(1+\exp(\gamma X)\big)
-
\underbrace{\frac{1}{\gamma(1-\alpha)}\E_{\bm{p}}\ln\frac{\bm{p}}{1-\alpha}}_{\text{constant independent of }X}.
\end{align}

\paragraph{Step 2}
Therefore, the exponentially smoothed CVaR is
\begin{equation}\label{eq:expsmoothedcvar}
\widetilde{\cvar}^{\gamma}_{\alpha}(X)
=
\min_{t\in\R}\left\{
t+\frac{1}{\gamma(1-\alpha)}\E_{\bm{p}}\ln\big(1+\exp(\gamma(X-t))\big)
 + C_{\alpha}^{\gamma,\bm{p}}
\right\} \, ,
\end{equation}
where $C_{\alpha}^{\gamma,\bm{p}}=-\dfrac{1}{\gamma(1-\alpha)}\E_{\bm{p}}\ln\dfrac{\bm{p}}{1-\alpha}$.
\end{example}

\begin{remark}[Adjusted exponentially smoothed CVaR]
To remove the constant term $C_{\alpha}^{\gamma,\bm{p}}$ in~\eqref{eq:expsmoothedcvar}, define
\[
\hat{\psi}(\bm{q})
=
\psi(\bm{q})+\sum_{i=1}^n \frac{p_i}{1-\alpha}\ln\frac{p_i}{1-\alpha},
\]
which yields
\begin{equation}\label{eq:expsmoothedcvarnorm}
\widetilde{\cvar}_{\alpha}^{\gamma}(X)
=
\min_{t\in\R}
\left\{
t+\frac{1}{\gamma(1-\alpha)}\E_{\bm{p}}\ln\big(1+\exp(\gamma(X-t))\big)
\right\}.
\end{equation}
This type of smoothing is discussed in \cite{royset2024riskadaptiveapproachesstochasticoptimization} from the primal perspective.
\end{remark}

In this paper, we use an alternative smoothing that is \emph{relative to a reference distribution} $\bm{r}\in\operatorname{int}\mathcal{Q}$. Specifically, for such $\bm{r}$ we use the Bregman divergence generated by~\eqref{eq:FermiDirac},
\begin{equation}\label{eq:Bregman_div_formula}
D_\psi(\bm{q},\bm{r})
=
\sum_{i=1}^n
q_i\ln\frac{q_i}{r_i}
+
\Big(\frac{p_i}{1-\alpha}-q_i\Big)\ln\frac{\frac{p_i}{1-\alpha}-q_i}{\frac{p_i}{1-\alpha}-r_i},
\end{equation}
for $\bm{q}\in\mathcal{Q}$, and $D_\psi(\bm{q},\bm{r})=+\infty$ otherwise.

We then define the epi-smoothed risk via the \emph{Bregman kernel} $D_\psi(\cdot,\bm{r})$:
\begin{equation}\label{eq:Bregman CVaR formula}
\widetilde{\mathcal{R}}^{\gamma, \bm{r}}(X)
=
\min_{t\in\R}
\Big\{
t+\widetilde{\mathcal{V}}^{\gamma, \bm{r}}(X-t)
\Big\},
\end{equation}
where
\begin{equation}\label{eq:Bregman regret}
\widetilde{\mathcal{V}}^{\gamma, \bm{r}}(X)
:=
\max_{\bm{q}\in\R^n}
\left\{
\langle \bm{q},X\rangle-\frac{1}{\gamma}D_\psi(\bm{q},\bm{r})
\right\}.
\end{equation}
\begin{proposition}[Bregman epi-smoothed CVaR]
\label{prop:Bregman epi-regularized CVaR}
Let $D_\psi$ be the Bregman divergence associated with the Fermi--Dirac entropy~\eqref{eq:FermiDirac}. Then for any $\bm{r}\in\operatorname{int}\mathcal{Q}$,
\begin{align}
\tilde{\bm{q}}^{\gamma, \bm{r}}(X)
&=
\nabla\psi^{-1}\big(\nabla\psi(\bm{r})+\gamma X\big)
\in\operatorname{int}\mathcal{Q}_\alpha,
\label{eq:Bregman q}\\
\widetilde{\mathcal{V}}_\alpha^{\gamma,\bm{r}}(X)
&=
\frac{1}{\gamma(1-\alpha)}\E_{\bm{p}}
\ln\Big(1+\exp\big(\nabla\psi(\bm{r})+\gamma X\big)\Big)
+
C_{\alpha}^{\bm{p},\bm{r}, \gamma},
\label{eq:Bregman regret explicit}\\
\widetilde{\cvar}_{\alpha}^{\gamma, \bm{r}}(X)
&=
\min_{t\in\R}
\left\{
t+\frac{1}{\gamma(1-\alpha)}\E_{\bm{p}}
\ln\Big(1+\exp\big(\nabla\psi(\bm{r})+\gamma(X-t)\big)\Big)
+
C_{\alpha}^{\bm{p},\bm{r}, \gamma}
\right\},
\label{eq:Bregman CVaR}
\end{align}
where
\[
C_{\alpha}^{\bm{p},\bm{r}, \gamma}
:=
\frac{1}{\gamma(1-\alpha)}\E_{\bm{p}}
\ln\frac{\bm{p}/(1-\alpha)-\bm{r}}{\bm{p}/(1-\alpha)}.
\]
\end{proposition}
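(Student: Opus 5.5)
The maximization in \eqref{eq:Bregman regret} will be solved explicitly. It is separable, so it reduces to $n$ scalar problems; substituting back gives the regret, and the risk formula then follows from \eqref{eq:Bregman CVaR formula}.

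\textbf{Maximizer.} Set $b_i \coloneqq p_i/(1-\alpha)$. By \eqref{eq:Bregman_div_formula}, the objective is
\[
\langle \bm{q},X\rangle - \frac{1}{\gamma}\Big(\psi(\bm{q}) - \psi(\bm{r}) - \langle\nabla\psi(\bm{r}),\bm{q}-\bm{r}\rangle\Big).
\]
Here I read $D_\psi(\cdot,\bm{r})$ as finite on all of $\mathcal{Q}_\alpha$, since the regret is built on the box, as in Step~1 of \Cref{ex:ExpSmoothCVaR}. Each coordinate function $q_i\mapsto q_i\ln q_i+(b_i-q_i)\ln(b_i-q_i)$ is strictly convex on $(0,b_i)$. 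Its derivative is
\[
\psi_i'(q_i)=\ln\frac{q_i}{b_i-q_i},
\]
which tends to $\mp\infty$ at the endpoints. Hence the maximizer is unique and interior, and it is given by the first-order condition
\[
\nabla\psi(\bm{q})=\nabla\psi(\bm{r})+\gamma X.
\]
Inverting the logit map gives
\[
\tilde q_i=b_i\,\frac{e^{s_i}}{1+e^{s_i}},\qquad s_i\coloneqq\psi_i'(r_i)+\gamma X_i,
\]
which is \eqref{eq:Bregman q}, with each $\tilde q_i\in(0,b_i)$.

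\textbf{Value.} Substituting $\tilde{\bm q}$ back, I will use the conjugate identity $\psi^*(\bm{y})=\langle \bm{q},\bm{y}\rangle-\psi(\bm{q})$ at $\bm{y}=\nabla\psi(\bm{q})$. The scalar conjugate is
\[
\psi_i^*(y)=b_i\ln(1+e^{y})+b_i\ln b_i - b_i\ln b_i\ \text{-type terms},
\]
and I will compute it precisely: at $q=b\sigma(y)$ one gets $q\ln q+(b-q)\ln(b-q)=b\ln b+b[\sigma\ln\sigma+(1-\sigma)\ln(1-\sigma)]$, so $\psi_i^*(y)=b_i\ln(1+e^y)-b_i\ln b_i$. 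Then
\[
\widetilde{\mathcal V}=\frac1\gamma\Big[\psi^*(\nabla\psi(\bm r)+\gamma X)-\langle\nabla\psi(\bm r),\bm r\rangle+\psi(\bm r)\Big]
=\frac1\gamma\Big[\psi^*(\nabla\psi(\bm r)+\gamma X)-\psi^*(\nabla\psi(\bm r))\Big],
\]
using Fenchel equality at $\bm r$.

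\textbf{Constant.} It remains to evaluate $\psi_i^*(\psi_i'(r_i))$. Since $1+e^{\psi_i'(r_i)}=b_i/(b_i-r_i)$, we get
\[
\psi_i^*(\psi_i'(r_i))=-b_i\ln\frac{b_i-r_i}{b_i}-b_i\ln b_i .
\]
In the difference the $b_i\ln b_i$ terms cancel. Using $\sum_i b_i f_i=\frac1{1-\alpha}\E_{\bm p}f$, this yields \eqref{eq:Bregman regret explicit} with constant $\frac{1}{\gamma(1-\alpha)}\E_{\bm p}\ln\frac{\bm p/(1-\alpha)-\bm r}{\bm p/(1-\alpha)}$. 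Finally, \eqref{eq:Bregman CVaR} follows by inserting $X-t$ into \eqref{eq:Bregman CVaR formula}; the constant does not depend on $t$.

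\textbf{Main obstacle.} The main risk is bookkeeping of signs and constants in the conjugate, which must match the stated $C_\alpha^{\bm p,\bm r,\gamma}$ exactly. The stated constant is $\frac{1}{\gamma}\sum_i b_i\ln\frac{b_i-r_i}{b_i}$, which is $+\frac{1}{\gamma}\sum_i b_i\ln(1-r_i/b_i)$. The constant I obtain is $-\frac{1}{\gamma}\sum_i\psi_i^*(\psi_i'(r_i))$ up to the cancelled $b_i\ln b_i$ terms, i.e.\ $+\frac{1}{\gamma}\sum_i b_i\ln\frac{b_i-r_i}{b_i}$, so the two agree. I will double-check this by testing $\gamma X=0$: then $\widetilde{\mathcal V}=0$ must hold, since the maximizer is $\bm q=\bm r$ with $D_\psi=0$, and indeed the two log terms cancel.
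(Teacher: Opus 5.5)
Your proposal is correct and follows the paper's own route. First-order optimality over the box $\mathcal{Q}_\alpha$, with no simplex multiplier (the same reading the paper's proof uses), gives $\nabla\psi(\tilde{\bm q})=\nabla\psi(\bm r)+\gamma X$; substituting yields the regret, and inserting $X-t$ into \eqref{eq:Bregman CVaR formula} yields \eqref{eq:Bregman CVaR}. Your conjugate computation $\psi_i^*(y)=\frac{p_i}{1-\alpha}\bigl(\ln(1+e^{y})-\ln\frac{p_i}{1-\alpha}\bigr)$, combined with Fenchel equality at $\bm r$, spells out the substitution step the paper leaves implicit, and it recovers $C_{\alpha}^{\bm{p},\bm{r},\gamma}$ exactly.
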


\begin{proof}
Applying first-order optimality to~\eqref{eq:Bregman regret} yields
\[
X-\frac{1}{\gamma}\big(\nabla\psi(\bm{q})-\nabla\psi(\bm{r})\big)=\bm{0}
\quad\Longleftrightarrow\quad
\bm{q}=\nabla\psi^{-1}\big(\nabla\psi(\bm{r})+\gamma X\big),
\]
which proves~\eqref{eq:Bregman q}. Substituting the optimizer into~\eqref{eq:Bregman regret} gives~\eqref{eq:Bregman regret explicit}. Finally, using~\eqref{eq:Bregman CVaR formula} yields~\eqref{eq:Bregman CVaR}.
\end{proof}

\begin{corollary}[Optimal $t^\star$ in~\eqref{eq:Bregman CVaR}]
\label{cor:optimal_t}
For the Bregman epi-smoothed CVaR \eqref{eq:Bregman CVaR}, the minimizer $t^\star$ satisfies
\begin{equation}\label{eq:optimal t for CVaR}
\mathbf{1}^\top \tilde{\bm{q}}^{\gamma, \bm{r}}\big(X-t^\star\big)=1 \quad \text{and} \quad \tilde{\bm{q}}^{\gamma, \bm{r}}\big(X-t^\star\big) \in \operatorname{int} \mathcal{Q}.
\end{equation}
\end{corollary}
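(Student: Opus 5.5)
The plan is to use first-order optimality for the scalar problem in \eqref{eq:Bregman CVaR}. Define
\[
g(t) \coloneqq t+\frac{1}{\gamma(1-\alpha)}\E_{\bm{p}}\ln\Big(1+\exp\big(\nabla\psi(\bm{r})+\gamma(X-t)\big)\Big)+C_{\alpha}^{\bm{p},\bm{r},\gamma}.
\]
This is a smooth convex function of $t$, since log-sum-exp type terms composed with affine maps are convex. I will compute $g'(t)$ and identify it with $1-\mathbf{1}^\top\tilde{\bm{q}}^{\gamma,\bm{r}}(X-t)$. Stationarity then gives the normalization condition directly.

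First I need the componentwise form of $\nabla\psi^{-1}$. From \eqref{eq:FermiDirac}, writing $u_i \coloneqq p_i/(1-\alpha)$, we have $\psi_i'(q)=\ln\bigl(q/(u_i-q)\bigr)$. Hence $(\nabla\psi^{-1}(y))_i = u_i\, e^{y_i}/(1+e^{y_i})$, which matches the unshifted formula in \Cref{ex:ExpSmoothCVaR}. Therefore, by \eqref{eq:Bregman q},
\[
\tilde q_i^{\gamma,\bm{r}}(X-t)=u_i\,\frac{e^{z_i(t)}}{1+e^{z_i(t)}}, \qquad z_i(t)=\psi_i'(r_i)+\gamma(X_i-t).
\]
Differentiating $g$, the factor $p_i/(1-\alpha)=u_i$ from $\E_{\bm{p}}$ combines with $-\gamma\cdot\frac{1}{\gamma}$ to give
\[
g'(t)=1-\sum_i u_i\,\sigma(z_i(t))=1-\mathbf{1}^\top\tilde{\bm{q}}^{\gamma,\bm{r}}(X-t),
\]
where $\sigma(s)=e^s/(1+e^s)$.

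Next I show that a minimizer exists and is characterized by $g'(t^\star)=0$. The map $h(t)\coloneqq\mathbf{1}^\top\tilde{\bm{q}}^{\gamma,\bm{r}}(X-t)$ is continuous and strictly decreasing in $t$, because each $\sigma(z_i(t))$ is strictly decreasing and each $u_i>0$. As $t\to-\infty$, $h(t)\to\sum_i u_i=1/(1-\alpha)>1$. As $t\to+\infty$, $h(t)\to 0<1$. By the intermediate value theorem there is a unique $t^\star$ with $h(t^\star)=1$. This $t^\star$ is the unique minimizer of the convex differentiable $g$, since $g'<0$ to the left of $t^\star$ and $g'>0$ to the right. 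This step, namely existence and uniqueness through the limits, is the only real obstacle. It needs $\alpha\in(0,1)$ so that the total box mass strictly exceeds one, and it needs every $p_i>0$, which I will assume as is implicit in $\bm{r}\in\operatorname{int}\mathcal{Q}$. If some $p_i=0$, that coordinate contributes nothing and the argument is unchanged.

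Finally, for the interior claim, $\tilde{\bm{q}}\coloneqq\tilde{\bm{q}}^{\gamma,\bm{r}}(X-t^\star)$ satisfies $0<\tilde q_i<u_i$ strictly, because $\sigma\in(0,1)$. This agrees with \eqref{eq:Bregman q}, which places $\tilde{\bm{q}}$ in $\operatorname{int}\mathcal{Q}_\alpha$. Together with $\mathbf{1}^\top\tilde{\bm{q}}=1$, this gives $\tilde{\bm{q}}\in\mathcal{Q}$. Moreover, $\tilde{\bm{q}}$ lies in the relative interior of $\mathcal{Q}$ within the affine hyperplane $\{\mathbf{1}^\top\bm{q}=1\}$, which is the sense of $\operatorname{int}\mathcal{Q}$ used throughout, e.g.\ for $\bm{r}\in\operatorname{int}\mathcal{Q}$. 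This completes the plan.
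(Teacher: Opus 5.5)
Your proposal is correct and is essentially the argument the paper leaves implicit. The paper gives no separate proof: the corollary follows from the stationarity condition $g'(t^\star)=1-\mathbf{1}^\top\tilde{\bm{q}}^{\gamma,\bm{r}}(X-t^\star)=0$ of the scalar problem in \eqref{eq:Bregman CVaR} (equivalently, $t$ acts as the multiplier for $\mathbf{1}^\top\bm{q}=1$, as in the proof of \Cref{lem:Prox}), and the sigmoid form of $\nabla\psi^{-1}$ gives the strict box bounds. Your existence and uniqueness argument via the limits $1/(1-\alpha)>1$ and $0$, and your reading of $\operatorname{int}\mathcal{Q}$ as the relative interior within $\{\mathbf{1}^\top\bm{q}=1\}$, fill in details the paper takes for granted.
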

The next \Cref{sub:BregmanForCVaR} provides the technical details and relevant aspects of this approach applied to \eqref{eq:cvar_opt}. Here, we mention only that the Bregman smoothing method \eqref{eq:Bregman CVaR} is relative to a chosen distribution $\bm{r} \in \operatorname{int} \mathcal{Q}$, which makes the smoothing adaptive in the Bregman proximal point method framework---something that classical exponential smoothing \eqref{eq:expsmoothedcvar} lacks.

\subsection{Bregman proximal point for CVaR optimization}
\label{sub:BregmanForCVaR}

Define
\begin{equation}\label{eq:phi_for_sub}
\varphi(\bm{q})
\coloneqq
\begin{cases}
\displaystyle
-\min_{\bm{x}\in\mathcal{X}}\ \E_{\bm{q}}\big[F(\bm{x},\omega)\big],
&\text{if }\mathbf{1}^\top\bm{q}=1,\\[6pt]
+\infty,&\text{otherwise,}
\end{cases}    
\end{equation}
and rewrite~\cref{eq:cvarsaddlepoint} as the convex minimization problem
\begin{equation}\label{eq:DualProblem}
\min_{\bm{q}\in\mathcal{Q}}\ \varphi(\bm{q}) \, .
\end{equation}
Here, $\varphi$ is convex as the negative pointwise minimum of linear functions in $\bm{q}$.
We apply the Bregman proximal point method with the Fermi--Dirac $\psi$ to~\eqref{eq:DualProblem}. The next result gives the key closed-form proximal step and smoothed CVaR primal subproblem structure behind~\Cref{alg:ShortAlg}.

For $\bm r\in\operatorname{int}\mathcal Q$, $\gamma>0$,
$\bm x\in\mathcal X$, and $t\in\mathbb R$, define
\begin{equation}\label{eq:q_update_map}
\bm q_{\gamma,\bm r}(\bm x,t)
:=
\nabla\psi^{-1}\!\Big(
\nabla\psi(\bm r)
+
\gamma\big(F(\bm x,\omega)-t\big)
\Big).
\end{equation}

\begin{proposition}[Bregman step for CVaR]
\label{lem:Prox}
Assume $F:\mathcal{X}\times\Omega\to\R$ is convex in $\bm{x}\in\mathcal{X}$ for all $\omega\in\Omega$. Then for all $\bm{r}\in\operatorname{int}\mathcal{Q}$,
\begin{subequations}
\begin{equation}\label{eq:Prox_Formula}
\prox_{\gamma\varphi}(\bm{r})
= \bm q_{\gamma,\bm r}(\overline{\bm{x}},\overline{t})
\in\operatorname{int}\mathcal{Q},
\end{equation}
where $\overline{\bm{x}}$ solves the regularized subproblem
\begin{equation}\label{eq:Prox_RegularizedSubproblem}
\overline{\bm{x}}
\in
\argmin_{\bm{x}\in\mathcal{X}}
\ \widetilde{\cvar}_{\alpha}^{\gamma, \bm{r}}\big(F(\bm{x},\omega)\big),
\end{equation}
and $\overline{t}$ is chosen so that $\mathbf{1}^\top\bm q_{\gamma,\bm r}(\overline{\bm{x}},\overline{t}) =1$.
\end{subequations}
\end{proposition}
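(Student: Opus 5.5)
The plan is to write the proximal step as a min--max problem and exchange the order of optimization. By definition~\eqref{eq:ProxOperator} and~\eqref{eq:phi_for_sub},
\[
\prox_{\gamma\varphi}(\bm r)=\argmin_{\bm q\in\mathcal Q}\Big\{-\gamma\min_{\bm x\in\mathcal X}\E_{\bm q}[F(\bm x,\omega)]+D_\psi(\bm q,\bm r)\Big\},
\]
which is equivalent to the saddle problem
\[
\max_{\bm q\in\mathcal Q}\ \min_{\bm x\in\mathcal X}\ L(\bm x,\bm q),\qquad L(\bm x,\bm q)\coloneqq \langle \bm q,F(\bm x,\cdot)\rangle-\tfrac1\gamma D_\psi(\bm q,\bm r).
\]
The function $L$ is convex in $\bm x$, since $\bm q\ge 0$ and each $F_i$ is convex. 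It is strictly concave in $\bm q$ and continuous on $\mathcal Q$, and $\mathcal Q$ is compact. Sion's minimax theorem therefore gives
\[
\max_{\bm q}\min_{\bm x}L=\min_{\bm x}\max_{\bm q}L.
\]

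Next I evaluate the inner maximum. Dualize the constraint $\mathbf 1^\top\bm q=1$ with a multiplier $t$. Because $D_\psi(\cdot,\bm r)$ is $+\infty$ off $\mathcal Q_\alpha$, and since strong duality holds for this linearly constrained strictly concave problem (Slater: $\bm r\in\operatorname{int}\mathcal Q$),
\[
\max_{\bm q\in\mathcal Q}L(\bm x,\bm q)=\min_{t\in\R}\Big\{t+\max_{\bm q\in\R^n}\big[\langle\bm q,F(\bm x,\cdot)-t\rangle-\tfrac1\gamma D_\psi(\bm q,\bm r)\big]\Big\}=\widetilde{\cvar}^{\gamma,\bm r}_\alpha(F(\bm x,\omega)).
\]
The last equality uses~\eqref{eq:Bregman CVaR formula}, \eqref{eq:Bregman regret} and Proposition~\ref{prop:Bregman epi-regularized CVaR}. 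So the outer minimization in $\bm x$ is exactly the subproblem~\eqref{eq:Prox_RegularizedSubproblem}, and $\overline{\bm x}$ is a minimizer of it.

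To identify $\prox_{\gamma\varphi}(\bm r)$ I still need a saddle point $(\overline{\bm x},\overline{\bm q})$, not just equal optimal values. If $\overline{\bm x}$ attains the min--max value and $\overline{\bm q}$ is the unique maximizer of the max--min, then $(\overline{\bm x},\overline{\bm q})$ is a saddle point. Hence $\overline{\bm q}$ maximizes $L(\overline{\bm x},\cdot)$ over $\mathcal Q$. Strict concavity makes this maximizer unique, and it is the prox point. Applying Proposition~\ref{prop:Bregman epi-regularized CVaR} with $X=F(\overline{\bm x},\omega)-\overline t$ and Corollary~\ref{cor:optimal_t} identifies it as
\[
\overline{\bm q}=\nabla\psi^{-1}\big(\nabla\psi(\bm r)+\gamma(F(\overline{\bm x},\omega)-\overline t)\big)=\bm q_{\gamma,\bm r}(\overline{\bm x},\overline t),
\]
with $\overline t$ the optimal $t$, characterized by $\mathbf 1^\top\bm q_{\gamma,\bm r}(\overline{\bm x},\overline t)=1$. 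The map $t\mapsto \mathbf 1^\top\bm q_{\gamma,\bm r}(\overline{\bm x},t)$ is continuous and strictly decreasing, from $\mathbf 1^\top\bm p/(1-\alpha)>1$ down to $0$, so such a $\overline t$ exists and is unique. Membership in $\operatorname{int}\mathcal Q$ follows because the sigmoid form of $\nabla\psi^{-1}$ gives $0<q_i<p_i/(1-\alpha)$.

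The main obstacle is justifying the minimax exchange and attainment of the minimum in $\bm x$ when $\mathcal X$ is unbounded. Sion's theorem needs compactness only on the $\bm q$ side, which we have, so the value equality is safe. Attainment of $\overline{\bm x}$, however, is assumed through the hypothesis ``$\overline{\bm x}$ solves~\eqref{eq:Prox_RegularizedSubproblem}''. I would state the result conditional on that existence and verify the saddle-point inequalities directly. First, $\overline{\bm q}$ maximizes $L(\overline{\bm x},\cdot)$ by the computation above. Second, $\overline{\bm x}$ minimizes $L(\cdot,\overline{\bm q})$. This second inequality comes from the value equality $\min_{\bm x}\max_{\bm q}L=\max_{\bm q}\min_{\bm x}L$ combined with uniqueness of the $\bm q$-maximizer.
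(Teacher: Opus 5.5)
Your proposal is correct and follows the paper's argument: exchange the min and max, dualize $\mathbf{1}^\top\bm{q}=1$ with the multiplier $t$, evaluate the inner maximization in closed form via Proposition~\ref{prop:Bregman epi-regularized CVaR} to obtain $\widetilde{\cvar}_{\alpha}^{\gamma,\bm{r}}(F(\bm{x},\omega))$, and recover the prox point by substituting $(\overline{\bm{x}},\overline{t})$ and invoking Corollary~\ref{cor:optimal_t}. You make explicit what the paper compresses into ``using strong duality'' and ``substituting the minimizers'': compactness of $\mathcal{Q}$ for the exchange, Slater at $\bm{r}$ for the multiplier, and the saddle-point plus strict-concavity step showing that the prox point maximizes $L(\overline{\bm{x}},\cdot)$ for every minimizer $\overline{\bm{x}}$; the one small fix is that the exchange is better justified by the Kneser--Fan minimax theorem, since plain Sion would also require lower semicontinuity of $F(\cdot,\omega)$ on $\mathcal{X}$, which the paper does not assume.
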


\begin{proof}
Using strong duality, we can write the proximal step as
\begin{equation}\label{eq:Prox_StrongDuality}
\min_{\bm{x}\in\mathcal{X},\,t\in\R}\ \max_{\bm{q}\in\R^n}\ \frac{1}{\gamma}\mathcal{L}_{\bm{r}}(\bm{q},\bm{x},t),
\end{equation}
where (setting a Lagrange multiplier $\lambda=\gamma t$ for the constraint $\mathbf{1}^\top\bm{q}=1$)
\[
\mathcal{L}_{\bm{r}}(\bm{q},\bm{x},t)
=
\gamma t+\gamma\langle \bm{q},F(\bm{x},\omega)-t\rangle-D_\psi(\bm{q},\bm{r}).
\]
Applying \Cref{prop:Bregman epi-regularized CVaR} to the inner maximization over $\bm{q}$ yields the optimizer
$\bm{q}=\nabla\psi^{-1}\big(\nabla\psi(\bm{r})+\gamma(F(\bm{x},\omega)-t)\big)$, and the resulting reduced objective in $\bm{x}$ is precisely
$\widetilde{\cvar}_{\alpha}^{\gamma, \bm{r}}(F(\bm{x},\omega))$ up to constants (with implicit minimization in $t$). Minimizing over $\bm{x}$ gives~\eqref{eq:Prox_RegularizedSubproblem}, and substituting the minimizers and applying \Cref{cor:optimal_t} yields~\eqref{eq:Prox_Formula}.
\end{proof}

\begin{remark}[Notation]
For each $i=1,\ldots,n$, define
\begin{equation}\label{eq:Logexp}
h_i(\bm{x},t;\bm{r})
\coloneqq
\ln\Big(1+\exp\big(\nabla_i\psi(r_i)+\gamma(F_i(\bm{x})-t)\big)\Big),
\end{equation}
and the vector form
\begin{equation}\label{eq:LogexpVec}
\bm{h}(\bm{x},t;\bm{r})
\coloneqq
\ln\Big(1+\exp\big(\nabla\psi(\bm{r})+\gamma(F(\bm{x})-t)\big)\Big),
\end{equation}
where $\nabla_i\psi(r_i)$ denotes the scalar derivative of $\psi_i$ at $r_i$, and all exponentials/logarithms are component-wise.
We also denote
\begin{equation}\label{eq:prox_notation}
\overline{\bm q}
:=
\prox_{\gamma\varphi}(\bm r)
=
\bm q_{\gamma,\bm r}(\overline{\bm x},\overline t).
\end{equation}
\end{remark}

The full Bregman step in \Cref{lem:Prox} updates all components of the dual
probability vector $\bm q$. While this is natural in the deterministic setting,
it may be unnecessarily expensive when the number of scenarios is large or when
the objective is accessed through stochastic estimates. We therefore introduce a
block version of the Bregman step, in which only the components indexed by a
subset $\mathcal B\subseteq\{1,\ldots,n\}$ are updated, while the remaining
components are kept fixed. This preserves the structure of the full Bregman
update on the active block, but reduces the amount of information required at
each iteration.

The block formulation is particularly useful in the stochastic setting, where only stochastic estimates of the relevant quantities (e.g., subgradient and objective) are available. The
current dual probability vector can be used to select a block of scenarios and
to define the corresponding importance-sampling weights. Thus, the method
updates the dual distribution only on the sampled scenarios, while retaining the
previous probabilities on the complement. Since the Bregman geometry keeps the
dual probabilities in the interior of the risk envelope, all scenarios remain
eligible for future sampling, which provides a natural exploration mechanism.

For $\bm r\in\operatorname{int}\mathcal Q$, $\gamma>0$,
$\bm x\in\mathcal X$, $t\in\mathbb R$, and a block
$\mathcal B\subseteq\{1,\ldots,n\}$, define the block dual update vector
$\bm q_{\gamma,\bm r}^{\mathcal B}(\bm x,t)$ componentwise by
\begin{equation}\label{eq:q_update_map_block}
\big[\bm q_{\gamma,\bm r}^{\mathcal B}(\bm x,t)\big]_i
:=
\begin{cases}
\nabla_i\psi^{-1}\!\Big(
\nabla_i\psi(r_i)+\gamma\big(F_i(\bm x)-t\big)
\Big),
& i\in\mathcal B,
\\[0.4em]
r_i,
& i\in\mathcal B^c.
\end{cases}
\end{equation}

\begin{corollary}[Block Bregman step for CVaR]
\label{cor:BlockBregmanStep}
Let the assumptions of \Cref{lem:Prox} hold. Fix a block
$\mathcal B\subseteq\{1,\ldots,n\}$ with $|\mathcal B|\ge 2$ and let
$\mathcal B^c:=\{1,\ldots,n\}\setminus\mathcal B$. Then, for every
$\bm r\in\operatorname{int}\mathcal Q$ and $\gamma>0$,
\begin{equation}\label{eq:Prox_Formula_block}
\prox_{\gamma\varphi}^{\mathcal B}(\bm r)
=
\bm q_{\gamma,\bm r}^{\mathcal B}(\hat{\bm x},\hat t)
\in\operatorname{int}\mathcal Q,
\end{equation}
where $(\hat{\bm x},\hat t)$ solves
\begin{equation}\label{eq:Prox_RegularizedSubproblem_block}
(\hat{\bm x},\hat t)
\in
\argmin_{\bm x\in\mathcal X,\,t\in\mathbb R}
\left\{
t
+
\sum_{i\in\mathcal B^c} r_i\big(F_i(\bm x)-t\big)
+
\frac{1}{\gamma(1-\alpha)}
\sum_{i\in\mathcal B} p_i\,h_i(\bm x,t;\bm r)
\right\}.
\end{equation}
\end{corollary}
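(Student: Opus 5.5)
Throughout, I read $\prox^{\mathcal B}_{\gamma\varphi}(\bm r)$ as the Bregman proximal step \eqref{eq:ProxOperator} restricted to $\{\bm q\in\mathcal Q:\ q_i=r_i,\ i\in\mathcal B^c\}$. The plan is to repeat the Lagrangian/minimax argument of \Cref{lem:Prox} on this smaller feasible set. The key point is that the separable form \eqref{eq:bregmandivsep} of $D_\psi$ lets the inner maximization decouple: coordinates in $\mathcal B$ are handled exactly as in \Cref{prop:Bregman epi-regularized CVaR}, and coordinates in $\mathcal B^c$ are frozen at $r_i$.

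\emph{Minimax reformulation.} By \eqref{eq:phi_for_sub}, $\gamma\varphi(\bm q)=\max_{\bm x\in\mathcal X}\{-\gamma\langle\bm q,F(\bm x)\rangle\}$. The block prox is therefore
\[
\min_{\bm q}\ \max_{\bm x\in\mathcal X}\ \Big\{-\gamma\langle \bm q,F(\bm x)\rangle+D_\psi(\bm q,\bm r)\Big\},
\]
where $\bm q$ ranges over $\mathcal Q_\alpha$ subject to $\bm q_{\mathcal B^c}=\bm r_{\mathcal B^c}$ and $\mathbf 1^\top\bm q=1$. I will dualize the equality constraint $\sum_{i\in\mathcal B}q_i=1-\sum_{i\in\mathcal B^c}r_i$ with multiplier $\gamma t$. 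Slater's condition holds, since $\bm r$ itself is feasible and lies in $\operatorname{int}\mathcal Q$.

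The objective is convex and lower semicontinuous in $\bm q$ on the compact set $\mathcal Q_\alpha$. It is concave in $\bm x$, because each $F_i$ is convex in $\bm x$ and $q_i\ge 0$. Sion's minimax theorem therefore lets me swap the order and write the problem as $\max_{\bm x,t}\min_{\bm q}$, equivalently (after a sign change) $\min_{\bm x,t}\max_{\bm q}$, matching the structure of \eqref{eq:Prox_StrongDuality}. This swap, together with the attainment of a saddle point $(\hat{\bm x},\hat t)$, is the step I expect to need the most care. For attainment I would use coercivity of the reduced objective in $t$ (it grows like $|t|$ in both directions when $|\mathcal B|\ge 2$ and $\bm r\in\operatorname{int}\mathcal Q$). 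For $\bm x$, existence is implicitly assumed in the statement via ``$(\hat{\bm x},\hat t)$ solves''.

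\emph{Solving the inner problem coordinate-wise.} For fixed $(\bm x,t)$ the Lagrangian splits over $i$.
\begin{itemize}
\item For $i\in\mathcal B^c$ the coordinate is frozen, $q_i=r_i$ and $D$-term zero, leaving the linear term $r_i(F_i(\bm x)-t)$.
\item For $i\in\mathcal B$ one maximizes $q_i\big(F_i(\bm x)-t\big)-\gamma^{-1}D_{\psi_i}(q_i,r_i)$.
\end{itemize}
With $c_i=p_i/(1-\alpha)$ we have $\psi_i'(q)=\ln\big(q/(c_i-q)\big)$, so $(\psi_i')^{-1}(y)=c_i\,e^y/(1+e^y)$ and $\psi_i^*(y)=c_i\ln(1+e^y)$. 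Hence the maximizer for $i\in\mathcal B$ is exactly the $i$-th entry of \eqref{eq:q_update_map_block}. The maximal value is $\frac{p_i}{\gamma(1-\alpha)}h_i(\bm x,t;\bm r)$ plus a term depending only on $r_i$, as in \eqref{eq:Bregman regret explicit}. Adding the multiplier term $t$ and discarding constants gives precisely the objective in \eqref{eq:Prox_RegularizedSubproblem_block}. The outer minimization over $(\bm x,t)$ is then the stated subproblem, and substituting the saddle point $(\hat{\bm x},\hat t)$ into the maximizer gives \eqref{eq:Prox_Formula_block}.

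\emph{Interiority and normalization.} Each block coordinate is of the form $c_i\cdot\sigma(\cdot)\in(0,c_i)$, and each coordinate in $\mathcal B^c$ equals $r_i\in(0,c_i)$. So $\bm q\in\operatorname{int}\mathcal Q_\alpha$. For normalization I would use the first-order condition in $t$ at $\hat t$. Since $\partial_t h_i=-\gamma\,\sigma(\cdot)$, this condition reads
\[
1-\sum_{i\in\mathcal B^c}r_i-\sum_{i\in\mathcal B}\big[\bm q^{\mathcal B}_{\gamma,\bm r}(\hat{\bm x},\hat t)\big]_i=0,
\]
which is exactly $\mathbf 1^\top\bm q=1$, the analogue of \Cref{cor:optimal_t}. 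Solvability in $t$ follows because the block sum decreases continuously and strictly from $\sum_{i\in\mathcal B}c_i$ to $0$ as $t$ runs from $-\infty$ to $+\infty$. The target value $1-\sum_{\mathcal B^c}r_i=\sum_{\mathcal B}r_i$ lies strictly between these limits because $\bm r$ is interior. Uniqueness of the prox point follows from strict convexity of $D_\psi(\cdot,\bm r)$ on the block.
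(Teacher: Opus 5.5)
Your proposal is correct and takes the paper's route. The paper's proof just says to repeat the strong-duality/Lagrangian argument of \Cref{lem:Prox} with the coordinates in $\mathcal B^c$ frozen at $r_i$, and you carry that out in full detail (Sion for the swap, the inner maximization solved coordinatewise, the $t$-stationarity condition giving $\mathbf 1^\top\bm q=1$, and interiority). One harmless slip: the conjugate is $\psi_i^*(y)=c_i\ln(1+e^{y})-c_i\ln c_i$, not $c_i\ln(1+e^{y})$, but the extra term does not depend on $(\bm x,t)$ and is dropped with the other constants.
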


\begin{proof}
Repeat the proof of \Cref{lem:Prox}, treating the coordinates $q_i=r_i$ for $i\in\mathcal{B}^c$ as fixed and optimizing only over $\{q_i:i\in\mathcal{B}\}$.
\end{proof}

\begin{remark}[Block notation]
Equation~\eqref{eq:Prox_Formula_block} defines the block Bregman proximal operator, denoted $\prox_{\gamma\varphi}^{\mathcal{B}}$: it coincides with $\prox_{\gamma\varphi}$ on $\mathcal{B}$ and is the identity on $\mathcal{B}^c$. Following~\eqref{eq:prox_notation}, we write
\[
\overline{\bm q}^{\mathcal{B}}
:=
\prox_{\gamma\varphi}(\bm r) = \bm{q}^{\mathcal{B}}_{\gamma,\bm r}(\hat{\bm{x}},\hat{t}).
\]
Define the block logistic functions
\begin{align}
h_i^{\mathcal{B}}(\bm{x},t;\bm{r})
&:=
\frac{1}{\gamma(1-\alpha)}\frac{p_i}{r_i}\,h_i(\bm{x},t;\bm{r}),
\qquad i\in\mathcal{B},\\
h_i^{\mathcal{B}}(\bm{x},t;\bm{r})
&:=
F_i(\bm{x})-t,
\hspace{2.6cm} i\in\mathcal{B}^c,
\end{align}
and let $\bm{h}^{\mathcal{B}}(\bm{x},t;\bm{r})$ be the random variable taking values $h_i^{\mathcal{B}}(\bm{x},t;\bm{r})$ with probabilities $r_i$. Then the objective in~\eqref{eq:Prox_RegularizedSubproblem_block} can be written as
\begin{equation}\label{eq:ProxSubObj_block}
f^{\mathcal{B}}(\bm{x},t;\bm{r})
:=
t+\E_{\bm{r}}\big[\bm{h}^{\mathcal{B}}(\bm{x},t;\bm{r})\big].
\end{equation}
Finally, define the block-Bregman epi-smoothed CVaR as
\begin{equation}\label{eq:block_epi_cvar}
\widetilde{\cvar}_{\alpha, \mathcal{B}}^{\gamma, \bm{r}}\big(F(\bm{x},\omega)\big)
:=
\min_{t\in\R}\ f^{\mathcal{B}}(\bm{x},t;\bm{r}).
\end{equation}
\end{remark}

\subsection{Optimality conditions}
\label{sub:OptimalityCond}

This section derives optimality conditions for minimizing the block-Bregman epi-smoothed CVaR~\eqref{eq:block_epi_cvar}, i.e.,
\begin{equation}\label{eq:block_cvar_opt}
\min_{\bm{x}\in\mathcal{X}}\ \widetilde{\cvar}_{\alpha, \mathcal{B}}^{\gamma, \bm{r}}\big(F(\bm{x},\omega)\big).
\end{equation}
We first treat the full-block case $\mathcal{B}=\{1,\ldots,n\}$.

\begin{proposition}[Optimality conditions]
\label{thm:OptimalityConditions}
Under the assumptions of \Cref{lem:Prox}, any optimal decision variable $\overline{\bm{x}}$ and the corresponding optimal multiplier $\overline{t}$ satisfy
\begin{subequations}\label{eq:OptimalityConditions}
\begin{gather}
\label{eq:OptimalityConditions_Feasibility}
\mathbf{1}^\top\overline{\bm q}=1,\\[4pt]
\label{eq:OptimalityConditions_Gradient}
-\E_{\bm{q}^+(\overline{\bm{x}},\overline{t})}\big[g_F(\overline{\bm{x}},\omega)\big]\in \mathcal{N}_{\mathcal{X}}(\overline{\bm{x}}),
\end{gather}
\end{subequations}
where $\mathcal{N}_{\mathcal{X}}(\overline{\bm{x}})$ is the normal cone to $\mathcal{X}$ at $\overline{\bm{x}}$ and $g_F(\overline{\bm{x}},\omega)\in\partial F(\overline{\bm{x}},\omega)$ is a (measurable) subgradient of $F$ in $\bm{x}$.
\end{proposition}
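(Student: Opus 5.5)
We treat the full-block subproblem \eqref{eq:block_cvar_opt} with $\mathcal{B}=\{1,\ldots,n\}$ as a joint convex minimization in $(\bm{x},t)$ over $\mathcal{X}\times\R$. By \Cref{prop:Bregman epi-regularized CVaR}, the objective is
\[
G(\bm{x},t) := t+\frac{1}{\gamma(1-\alpha)}\sum_{i=1}^n p_i\,h_i(\bm{x},t;\bm{r}) + C_\alpha^{\bm p,\bm r,\gamma}.
\]
The plan has three steps. First show $G$ is convex. Then write Fermat's rule $0\in\partial G(\overline{\bm x},\overline t)+\mathcal N_{\mathcal X}(\overline{\bm x})\times\{0\}$. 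Finally split the result into its $t$-component and its $\bm x$-component. We read $\bm q^+(\overline{\bm x},\overline t)$ as the updated dual vector $\bm q_{\gamma,\bm r}(\overline{\bm x},\overline t)=\overline{\bm q}$ from \eqref{eq:q_update_map}.

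\textbf{Convexity.} The map $s\mapsto\ln(1+e^{s})$ is convex and nondecreasing. Each $F_i(\bm x)-t$ is jointly convex in $(\bm x,t)$, and $\nabla_i\psi(r_i)$ is a constant. Hence each $h_i$ is convex, being a nondecreasing convex function composed with a convex one. The chain rule for such compositions then gives
\[
\partial_{(\bm x,t)} h_i=\sigma_i\,\gamma\,\big(\partial F_i(\bm x)\times\{-1\}\big),\qquad \sigma_i:=\frac{e^{s_i}}{1+e^{s_i}},\quad s_i=\nabla_i\psi(r_i)+\gamma(F_i(\bm x)-t).
\]
The key identity is
\[
\frac{p_i}{1-\alpha}\,\sigma_i=\big[\nabla\psi^{-1}(s)\big]_i=q_{\gamma,\bm r,i}(\bm x,t).
\]
This follows from $\nabla_i\psi(q)=\ln\frac{q}{p_i/(1-\alpha)-q}$, whose inverse is the scaled logistic function already computed in \Cref{ex:ExpSmoothCVaR}. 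Multiplying by the prefactor $\frac{1}{\gamma(1-\alpha)}$ therefore cancels $\gamma$ and $1-\alpha$. The subdifferential of the sum is the sum of the subdifferentials, since all terms are finite convex functions on the relevant domain; if needed we add a Slater-type qualification that $\mathcal X$ meets the interior of the domain of $F$. This gives
\[
\partial G(\bm x,t)=\Big(\sum_i q_i(\bm x,t)\,\partial F_i(\bm x)\Big)\times\Big\{1-\mathbf 1^\top\bm q(\bm x,t)\Big\}.
\]

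\textbf{Splitting the optimality condition.} The $t$-component is unconstrained, since $t\in\R$, so it must vanish. That yields $\mathbf 1^\top\overline{\bm q}=1$, which is \eqref{eq:OptimalityConditions_Feasibility}; it is consistent with \Cref{cor:optimal_t}. The $\bm x$-component gives a measurable selection $g_F(\overline{\bm x},\omega_i)\in\partial F_i(\overline{\bm x})$ with
\[
-\sum_i \overline q_i\, g_F(\overline{\bm x},\omega_i)\in\mathcal N_{\mathcal X}(\overline{\bm x}).
\]
Measurability is trivial on a finite $\Omega$. Because $\mathbf 1^\top\overline{\bm q}=1$, this sum is exactly $\E_{\overline{\bm q}}[g_F]$, which gives \eqref{eq:OptimalityConditions_Gradient}.

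We must also check that joint optimality in $(\bm x,t)$ coincides with optimality of $\overline{\bm x}$ in \eqref{eq:block_cvar_opt} with $\overline t$ the inner minimizer. This holds by definition of the partial minimum.

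\textbf{Main obstacle.} The main difficulty is rigor in the subdifferential calculus: applying the chain rule to $\ln(1+e^{\cdot})$ composed with nonsmooth $F_i$, and applying the sum rule together with the normal cone. The first attempt is the standard rule for composing a differentiable nondecreasing convex outer function with a convex inner one. For the sum with the indicator of $\mathcal X$, we invoke the Moreau–Rockafellar theorem, which applies because $G$ is finite-valued on a neighbourhood of $\mathcal X\times\R$.
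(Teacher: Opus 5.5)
Your proposal is correct and follows essentially the same route as the paper. The paper's two-line proof treats $\mathbf{1}^\top\overline{\bm q}=1$ as the defining condition for $\overline t$, which is $t$-stationarity, and treats the normal-cone inclusion as first-order optimality for the convex subproblem; your joint Fermat rule in $(\bm x,t)$ makes both explicit via the chain rule and the identity $\frac{p_i}{1-\alpha}\sigma_i=q_{\gamma,\bm r,i}(\bm x,t)$, and your extra proviso that $F(\cdot,\omega)$ be finite near $\mathcal{X}$ (so the sum rule with $\mathcal{N}_{\mathcal{X}}$ applies) is tacitly assumed in the paper's statement as well.
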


\begin{proof}
Equation~\eqref{eq:OptimalityConditions_Feasibility} restates the defining condition for $\overline{t}$.
Given $\overline{t}$, the inclusion~\eqref{eq:OptimalityConditions_Gradient} follows from first-order optimality for the convex problem~\eqref{eq:Prox_RegularizedSubproblem}.
\end{proof}

\begin{corollary}[Block optimality conditions]
\label{cor:BlockOpt}
Under the assumptions of \allowbreak\Cref{cor:BlockBregmanStep}, any optimal $\hat{\bm{x}}$ and corresponding $\hat{t}$ satisfy
\begin{subequations}\label{eq:OptimalityConditions_block}
\begin{gather}
\label{eq:OptimalityConditions_Feasibility_block}
\mathbf{1}^\top \overline{\bm q}^{\mathcal{B}}=1,\\[4pt]
\label{eq:OptimalityConditions_Gradient_block}
-\E_{\bm{q}^{\mathcal{B}+}(\hat{\bm{x}},\hat{t})}\big[g_F(\hat{\bm{x}},\omega)\big]\in \mathcal{N}_{\mathcal{X}}(\hat{\bm{x}}).
\end{gather}
\end{subequations}
\end{corollary}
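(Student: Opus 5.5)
The plan is to mirror the proof of \Cref{thm:OptimalityConditions}, now working with the block subproblem \eqref{eq:Prox_RegularizedSubproblem_block}. Here $\bm{q}^{\mathcal{B}+}(\hat{\bm x},\hat t)$ is read as the block update $\bm q^{\mathcal B}_{\gamma,\bm r}(\hat{\bm x},\hat t)$ from \eqref{eq:q_update_map_block}. That subproblem is jointly convex in $(\bm x,t)$. To see this, note two facts. First, each $h_i$ is the composition of the convex, nondecreasing softplus $s\mapsto\ln(1+e^s)$ with the convex map $(\bm x,t)\mapsto \nabla_i\psi(r_i)+\gamma(F_i(\bm x)-t)$. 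Second, the $\mathcal B^c$ terms are linear in $t$ and convex in $\bm x$, because $r_i>0$. Joint convexity means first-order optimality conditions are necessary at any minimizer $(\hat{\bm x},\hat t)$, which is all the statement requires. I will therefore write the optimality system as $0\in\partial_t f^{\mathcal B}$ together with $0\in\partial_{\bm x} f^{\mathcal B}+\mathcal N_{\mathcal X}(\hat{\bm x})$.

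\emph{Step 1 (the $t$-condition).} The objective is differentiable in $t$, so I will differentiate term by term. The key identity is the derivative of softplus, $\frac{d}{ds}\ln(1+e^s)=e^s/(1+e^s)$. Combined with $\nabla_i\psi^{-1}(y)=\frac{p_i}{1-\alpha}\frac{e^y}{1+e^y}$ (from \Cref{ex:ExpSmoothCVaR}), this gives
\[
\frac{1}{\gamma(1-\alpha)}p_i\,\partial_t h_i(\bm x,t;\bm r)=-\nabla_i\psi^{-1}\big(\nabla_i\psi(r_i)+\gamma(F_i(\bm x)-t)\big)=-[\bm q^{\mathcal B}_{\gamma,\bm r}(\bm x,t)]_i,\qquad i\in\mathcal B.
\]
For $i\in\mathcal B^c$, the derivative is $-r_i=-[\bm q^{\mathcal B}_{\gamma,\bm r}]_i$. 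Setting $\partial_t f^{\mathcal B}=0$ then yields $1-\mathbf 1^\top\bm q^{\mathcal B}_{\gamma,\bm r}(\hat{\bm x},\hat t)=0$, which is \eqref{eq:OptimalityConditions_Feasibility_block}. Membership in $\operatorname{int}\mathcal Q$ already follows from \Cref{cor:BlockBregmanStep}.

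\emph{Step 2 (the $\bm x$-condition).} With $\hat t$ fixed, I will apply the subdifferential chain rule for a convex nondecreasing differentiable outer function composed with a convex inner function. This gives $\partial_{\bm x}\big[\ln(1+e^{c_i+\gamma F_i(\bm x)})\big]=\gamma\frac{e^{(\cdot)}}{1+e^{(\cdot)}}\partial F_i(\bm x)$, and by the computation in Step 1 the resulting weight on $\partial F_i$ is exactly $[\bm q^{\mathcal B}_{\gamma,\bm r}]_i$ for $i\in\mathcal B$. For $i\in\mathcal B^c$ the weight is $r_i$, which is the same component. Because the sum of finitely many finite convex functions on the domain has subdifferential equal to the sum of the subdifferentials (Moreau--Rockafellar), I get
\[
\partial_{\bm x} f^{\mathcal B}(\hat{\bm x},\hat t)=\sum_i [\bm q^{\mathcal B}_{\gamma,\bm r}(\hat{\bm x},\hat t)]_i\,\partial F_i(\hat{\bm x}).
\]
The condition $0\in\partial_{\bm x}f^{\mathcal B}+\mathcal N_{\mathcal X}(\hat{\bm x})$ then means there exist subgradients $g_F(\hat{\bm x},\omega_i)$ such that $-\E_{\bm q^{\mathcal B+}}[g_F(\hat{\bm x},\omega)]\in\mathcal N_{\mathcal X}(\hat{\bm x})$, which is \eqref{eq:OptimalityConditions_Gradient_block}.

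\emph{Main obstacle.} The delicate point is justifying the subdifferential calculus, namely both the sum rule including the normal cone and the separation into $\bm x$- and $t$-parts of the joint subdifferential. I would handle it by assuming each $F_i$ is finite and convex on an open neighbourhood of $\mathcal X$, so that all summands are continuous and the sum rule needs no constraint qualification. The $t$-part is simple because $f^{\mathcal B}$ is $C^1$ in $t$. Finally, the joint condition $0\in\partial f^{\mathcal B}(\hat{\bm x},\hat t)+\mathcal N_{\mathcal X\times\R}$ implies both partial conditions, since each partial subdifferential contains the corresponding projection of the joint subdifferential.
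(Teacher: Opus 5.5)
Your proposal is correct and follows the paper's route. Both use first-order optimality for the block subproblem \eqref{eq:Prox_RegularizedSubproblem_block}: $\partial_t f^{\mathcal B}=0$ gives the mass condition, which the paper writes equivalently as $\sum_{i\in\mathcal B}\bar q_i^{\mathcal B}=\sum_{i\in\mathcal B}r_i$, and $\bm x$-stationarity gives the normal-cone inclusion. Your softplus-derivative computation, the chain and sum rules, and your added assumption that each $F_i$ is finite and convex near $\mathcal X$ make explicit what the paper's two-line proof leaves implicit.
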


\begin{proof}
The feasibility condition becomes $\sum_{i\in\mathcal{B}} \bar{q}^{\mathcal{B}}_i=\sum_{i\in\mathcal{B}} r_i$ and the stationarity condition follows from first-order optimality for~\eqref{eq:Prox_RegularizedSubproblem_block}.
\end{proof}

\begin{remark}[Size of the block $\mathcal{B}$]
\label{rem:SizeofBlock}
The block size must satisfy $|\mathcal{B}|\ge 2$. Indeed,~\eqref{eq:OptimalityConditions_Feasibility_block} is equivalent to
$\sum_{i\in\mathcal{B}} \bar{q}^{\mathcal{B}}_i=\sum_{i\in\mathcal{B}} r_i$.
If $|\mathcal{B}|=1$, this forces $\bar{q}^{\mathcal{B}}_i=r_i$ for the unique index in the block, hence $\prox_{\gamma\varphi}^{\mathcal{B}}=\operatorname{Id}$ and the update is trivial.
\end{remark}

\section{Convergence analysis}\label{sec:Analysis}

This section establishes convergence guarantees for \Cref{alg:ShortAlg}. 
We begin in \Cref{sub:Complete_algorithm} by presenting the complete
\textsc{EASIeST} scheme, including the outer randomized block Bregman updates
and the inner routines used to solve the regularized subproblems.

The convergence analysis then proceeds in two steps. First, we analyze the
stochastic inner subproblem routine \eqref{eq:tupdate}--\eqref{eq:xupdate} for
a \emph{fixed} dual distribution $\bm q$ and prove sublinear convergence for
general convex objectives in \Cref{th:convergencesubprob}.
Second, we use these inner guarantees to analyze the outer randomized block
Bregman updates in \Cref{alg:ShortAlg}, proving convergence in function values
in \Cref{th:blockproxcovfvalues} and almost sure convergence of the iterates in
\Cref{th:blockproxconvergence}.

\subsection{Complete EASIeST algorithm}
\label{sub:Complete_algorithm}

The derivations in \Crefrange{sub:Bregman_divergences}{sub:OptimalityCond}
lead to the \textsc{EASIeST}, summarized in \Cref{alg:ShortAlg}. At a high level,
\textsc{EASIeST} alternates between:
(i) a \emph{primal} inner stage, which approximately solves a smoothed
CVaR subproblem, and
(ii) a \emph{dual} outer stage, which updates the dual probability vector
$\bm q^k$ by a randomized block Bregman proximal step. We now describe the
two main components of the method.

\paragraph{Step 1. Block generation}
Block-coordinate and randomized coordinate methods have been extensively
studied; see, for example, Chapter~5 of \cite{ryu2022large} and the references
therein. Common block-selection strategies include cyclic rules, randomized
selection from a fixed partition, uniform or nonuniform coordinate sampling
\cite{ShalevShwartzTewari2009,ShalevShwartzTewari2011,Nesterov2012,
RichtarikTakac2014,RichtarikTakac2016,PatrascuNecoara2015}, importance
sampling based on coordinate-wise smoothness constants
\cite{Zhang2004,ZhangXiao2015,LeventhalLewis2010}, and greedy selection rules
\cite{Tseng1990,LuoTseng1992,ChenHeLiZhang2012}.

In our setting, however, a fixed-partition strategy is not appropriate. The block Bregman update preserves the total mass of the selected block
(cf.~\Cref{rem:SizeofBlock}); hence, if the coordinates were partitioned into
fixed blocks, the sequence $\{\bm q^k\}_{k\in\mathbb N}$ would remain confined
to a restricted subset of $\mathcal Q$, which need not contain an optimal dual
solution. We therefore sample a fresh block at every outer iteration.

Specifically, at iteration $k$, we choose a sampling distribution $\bm\pi^k$
over $\{1,\ldots,n\}$ with $\pi_i^k>0$ for all $i$, draw $m_k\ge 2$ indices
without replacement according to $\bm\pi^k$, and denote the resulting block by
$\mathcal B_k$. Since every index has a positive probability of selection, the randomized block updates are not restricted by fixed block-mass constraints and
can explore the full feasible region $\mathcal Q$.

For CVaR optimization, the natural choice is
\[
\bm\pi^k=\bm q^k.
\]
Indeed, the Bregman geometry guarantees
$\bm q^k\in\operatorname{int}\mathcal Q$, so $q_i^k>0$ for all
$i=1,\ldots,n$. Thus, $\bm q^k$ is a valid block-sampling distribution. This
choice is also consistent with the stochastic inner solver, which uses the same
dual probability vector $\bm q^k$ for importance sampling.

\begin{remark}\label{rem:PD_block_sampling}
The choice $\bm\pi^k=\bm q^k$ relies on the fact that the Bregman update keeps
the dual iterates in $\operatorname{int}\mathcal Q$. In contrast, a classical
quadratic proximal update may produce sparse dual iterates with some zero
components. Such iterates are not suitable as block-sampling distributions in
the stochastic setting, since zero-mass scenarios would no longer be sampled.
Although one could introduce a separate distribution $\bm\pi^k\neq\bm q^k$,
this would break the natural link between the outer dual update and the inner
importance-sampling distribution.
\end{remark}

\paragraph{Step 2. Subproblem solution}
At each outer iteration $k$, the block Bregman step requires solving the
regularized subproblem~\eqref{eq:Prox_RegularizedSubproblem_block}. We
distinguish between deterministic and stochastic implementations of
\textsc{EASIeST}. Here, the deterministic setting refers to either the case
where full function values and full (sub)gradients with respect to all scenarios
can be computed, or to the SAA setting, where a fixed finite scenario set is treated as the deterministic objective.

\emph{Deterministic subproblem solution.}
In the deterministic setting, \eqref{eq:Prox_RegularizedSubproblem_block} can be
solved by any appropriate deterministic optimization method. The choice of the
inner solver depends on the structure of $F$, the geometry of $\mathcal X$, the
dimension of the decision variable, and the cost of evaluating $F$ and its
(sub)gradients. For example, if only the convexity of $F(\cdot,\omega)$ is assumed,
one may use a projected subgradient-type method. If $F$ is smooth and the
dimension is moderate, deterministic first-order, quasi-Newton, or second-order methods may be preferable. Thus, the deterministic version of \textsc{EASIeST}
is not tied to a particular inner solver; the convergence analysis only requires
that the regularized subproblems be solved to the prescribed accuracy.

\emph{Stochastic subproblem solution.}
In the stochastic setting, we solve~\eqref{eq:Prox_RegularizedSubproblem_block}
using a projected stochastic subgradient method with $\bm q^k$ as the sampling
distribution. Unlike Step~1, the inner routine uses mini-batch sampling
\emph{with replacement}: scenarios are sampled independently, and repetitions
are allowed. As $\bm q^k$ adapts over the outer iterations, this yields an
adaptive importance-sampling mechanism.

Fix a mini-batch multiset $\mathcal I\subseteq\{1,\ldots,n\}$ with
$|\mathcal I|\ge 1$, where indices in $\mathcal I$ may repeat. For outer
iteration $k$ and inner iteration $j$, we use
\begin{align}
t_k^{j}
&=
\argmin_{t\in\R}\ f^{\mathcal B_k}(\bm x_k^{j},t;\bm q^k),
\label{eq:tupdate}\\
\bm x_k^{j+1}
&=
\operatorname{proj}_{\mathcal X}
\Big[
\bm x_k^{j}
-
\beta_j\,G_{\bm x}(\bm x_k^{j},t_k^{j};\mathcal I)
\Big],
\label{eq:xupdate}
\end{align}
where $\{\beta_j\}_{j\in\mathbb N}$ is a step-size sequence and
$G_{\bm x}(\bm x_k^{j},t_k^{j};\mathcal I)$ is an unbiased mini-batch
stochastic subgradient of $f^{\mathcal B_k}$ with respect to $\bm x$.
Convergence guarantees for~\eqref{eq:tupdate}--\eqref{eq:xupdate} are given
in~\Cref{th:convergencesubprob}.

\subsection{Subproblem convergence for general convex functions}
This subsection analyzes a specific stochastic routine for approximately
solving the regularized subproblems generated by \textsc{EASIeST}. In
particular, we focus on the projected stochastic subgradient scheme
\eqref{eq:tupdate}--\eqref{eq:xupdate} under a fixed dual distribution
$\bm q$. This choice is convenient for the stochastic version of the method,
but it is not intrinsic to the outer block Bregman framework. In deterministic
implementations, or when the subproblem admits additional exploitable structure,
one may replace this inner routine by any solver capable of producing the
required approximate solution.

For a fixed $\bm q \in \mathcal{Q}$, consider the subproblem objective $f^\mathcal{B}(\bm x,t;\bm q)$ and the update rule
\eqref{eq:tupdate}--\eqref{eq:xupdate}, where at iteration $j$ we take
$t^j\in\argmin_{t\in\R} f^\mathcal{B}(\bm x^j,t;\bm q)$ and then perform a projected stochastic (sub)gradient step in $\bm x$.
Throughout this subsection, we view $t^j$ as \emph{exact minimizer} given $\bm x^j$; this allows us to treat the method as projected stochastic subgradient descent on the reduced function
\[
\hat f^\mathcal{B}(\bm x;\bm q)\;:=\;\min_{t\in\R} f^\mathcal{B}(\bm x,t;\bm q),
\qquad\text{so that}\qquad
\hat f^\mathcal{B}(\bm x^j;\bm q)=f^\mathcal{B}(\bm x^j,t^j;\bm q).
\]

\begin{theorem}[Subproblem convergence]\label{th:convergencesubprob}
Let $\{(\bm{x}^j, t^j)\}_{j=0}^{T}$ be generated by \eqref{eq:tupdate}--\eqref{eq:xupdate}.
Assume:
\begin{enumerate}
\item[(i)] (\emph{Bounded second moment}) For all $\bm x\in\mathcal X$ and $t\in\R$,
\[
\mathbb{E}\!\left[\bigl\|G_{\bm{x}}(\bm{x},t;\mathcal{I}) \bigr\|_2^2\right]\;\leq\; B^2.
\]
\item[(ii)] (\emph{Bounded domain}) There exists $R<\infty$ such that $\|\bm x-\bm x^\star\|_2^2\le R^2$ for all $\bm x\in\mathcal X$,
where $\bm x^\star\in\arg\min\limits_{\bm x\in\mathcal X}\hat f^\mathcal{B}(\bm x;\bm q)$.
\end{enumerate}
Define the ergodic averages
$\Bar{\bm{x}}^T := \dfrac{1}{T}\sum\limits_{j=0}^{T-1}\bm{x}^j$ and
$\Bar{t}^T := \dfrac{1}{T}\sum\limits_{j=0}^{T-1}t^j$.
Then
\[
\mathbb{E}\!\left[f^\mathcal{B}(\Bar{\bm{x}}^T,\Bar{t}^T;\bm{q}) - f^\mathcal{B}_*\right]
\;\leq\;
\frac{R^2}{2T\,\beta_{T-1}}
\;+\;
\frac{1}{2T}\sum_{j=0}^{T-1}\beta_j B^2,
\]
where $f^\mathcal{B}_*:=\min\limits_{\bm x\in\mathcal X,\,t\in\R} f^\mathcal{B}(\bm x,t;\bm q)$.
\end{theorem}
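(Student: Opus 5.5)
Since $t^j$ is an exact minimizer, the iteration \eqref{eq:tupdate}--\eqref{eq:xupdate} is projected stochastic subgradient descent on the reduced function $\hat f^{\mathcal B}(\cdot;\bm q)$. The plan is to run the standard nonincreasing-step-size analysis on $\hat f^{\mathcal B}$, and then transfer the resulting bound to $f^{\mathcal B}$ at the averaged pair $(\bar{\bm x}^T,\bar t^T)$ using joint convexity.

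First I check the structure. $f^{\mathcal B}(\bm x,t;\bm q)$ is jointly convex in $(\bm x,t)$: the $\mathcal B^c$ terms are affine in $t$ and convex in $\bm x$, and each $h_i$ is a softplus (convex, nondecreasing) composed with the convex function $\gamma(F_i(\bm x)-t)+\text{const}$. Hence $\hat f^{\mathcal B}$ is convex, being a partial minimization of a jointly convex function.

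Next I need the fact that a partial subgradient $g\in\partial_{\bm x} f^{\mathcal B}(\bm x^j,t^j)$ is a subgradient of $\hat f^{\mathcal B}$ at $\bm x^j$. Since $t^j$ minimizes in $t$, $(g,0)\in\partial f^{\mathcal B}(\bm x^j,t^j)$, at least when $f$ is differentiable in $t$, which holds here because softplus is smooth. Then for any $\bm y$,
\[
\hat f^{\mathcal B}(\bm y)=f^{\mathcal B}(\bm y,t_{\bm y})\ge f^{\mathcal B}(\bm x^j,t^j)+\langle g,\bm y-\bm x^j\rangle .
\]
This is the step I expect to be the main obstacle. Joint subdifferential decomposition can fail for nonsmooth $F$, so I would argue via the chain rule: $\partial f$ contains $\sum_i w_i(\partial F_i,-1)$ with a single weight vector $w$, and the $t$-optimality condition forces the $t$-component to vanish. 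Unbiasedness gives $\E[G_{\bm x}\mid \bm x^j]=g^j$.

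With that in hand, the standard three-line argument applies. Nonexpansiveness of the projection gives
\[
\|\bm x^{j+1}-\bm x^\star\|^2\le \|\bm x^j-\bm x^\star\|^2-2\beta_j\langle G^j,\bm x^j-\bm x^\star\rangle+\beta_j^2\|G^j\|^2 .
\]
Take conditional expectations, use the subgradient inequality and assumption (i), divide by $2\beta_j$, and sum over $j$. For the telescoping term with nonincreasing $\beta_j$, I would use Abel summation together with assumption (ii). This yields
\[
\sum_j \E[\hat f(\bm x^j)-\hat f^\star]\le \frac{R^2}{2\beta_{T-1}}+\frac{B^2}{2}\sum_j\beta_j .
\]
Here I implicitly assume $\beta_j$ nonincreasing; for constant steps the bound is immediate. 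Dividing by $T$ gives the right-hand side of the theorem.

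Finally I pass to the averages. By joint convexity and Jensen,
\[
f^{\mathcal B}(\bar{\bm x}^T,\bar t^T)\le \frac1T\sum_j f^{\mathcal B}(\bm x^j,t^j)=\frac1T\sum_j\hat f^{\mathcal B}(\bm x^j).
\]
Moreover $f^{\mathcal B}_*=\min_{\bm x}\hat f^{\mathcal B}(\bm x)=\hat f^{\mathcal B}(\bm x^\star)$. Combining these completes the proof.
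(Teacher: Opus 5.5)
Your proposal is correct and follows the paper's proof essentially step for step. Both run projected stochastic subgradient descent on the reduced function $\hat f^{\mathcal B}(\cdot;\bm q)$, use $t^j$-optimality to make $g^j$ a subgradient of $\hat f^{\mathcal B}$, telescope with nonincreasing $\beta_j$ and assumption (ii), and pass to $(\bar{\bm x}^T,\bar t^T)$ by joint convexity and Jensen. Your chain-rule argument for the subgradient-transfer step (the $t$-component $1-\sum_i w_i$ of every joint subgradient vanishes at $t^j$ because $f^{\mathcal B}$ is smooth in $t$) and your Abel-summation treatment of the telescoping sum supply details that the paper only asserts.
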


\begin{proof}
Let $g^j:=\mathbb{E}\!\left[G_{\bm x}(\bm x^j,t^j;\mathcal I)\mid \bm x^j,t^j\right]$ and
$\eta_j:=G_{\bm x}(\bm x^j,t^j;\mathcal I)-g^j$, so that
$\mathbb{E}[\eta_j\mid \bm x^j,t^j]=0$.
By the nonexpansiveness of the Euclidean projection,
\begin{align*}
\|\bm x^{j+1}-\bm x^\star\|_2^2
&=
\Big\|\proj_{\mathcal X}\!\big(\bm x^j-\beta_j G_{\bm x}(\bm x^j,t^j;\mathcal I)\big)-\bm x^\star\Big\|_2^2\\
&\le
\big\|\bm x^j-\beta_j G_{\bm x}(\bm x^j,t^j;\mathcal I)-\bm x^\star\big\|_2^2\\
&=
\|\bm x^j-\bm x^\star\|_2^2
+\beta_j^2\|G_{\bm x}(\bm x^j,t^j;\mathcal I)\|_2^2
-2\beta_j\langle \bm x^j-\bm x^\star,\, G_{\bm x}(\bm x^j,t^j;\mathcal I)\rangle .
\end{align*}
Taking conditional expectation given $\bm x^j,t^j$ and using $\mathbb{E}[\eta_j\mid \bm x^j,t^j]=0$ gives
\begin{equation*}
\begin{split}
  \mathbb{E}\!\left[\|\bm x^{j+1}-\bm x^\star\|_2^2\mid \bm x^j,t^j\right]
\le
\|\bm x^j-\bm x^\star\|_2^2
&+\beta_j^2\,\mathbb{E}\!\left[\|G_{\bm x}(\bm x^j,t^j;\mathcal I)\|_2^2\mid \bm x^j,t^j\right]\\
&-2\beta_j\langle \bm x^j-\bm x^\star,\, g^j\rangle .  
\end{split}  
\end{equation*}
Since $t^j\in\arg\min_t f^\mathcal{B}(\bm x^j,t;\bm q)$, any expected $\bm x$-(sub)gradient at $(\bm x^j,t^j)$ is a valid subgradient of the reduced function $\hat f^\mathcal{B}(\cdot;\bm q)$ at $\bm x^j$, hence $g^j\in\partial \hat f^\mathcal{B}(\bm x^j;\bm q)$.
By convexity of $\hat f^\mathcal{B}$,
$\langle \bm x^j-\bm x^\star, g^j\rangle\ge \hat f^\mathcal{B}(\bm x^j;\bm q)-\hat f^\mathcal{B}(\bm x^\star;\bm q)= f^\mathcal{B}(\bm x^j,t^j;\bm q)-f^\mathcal{B}_*$.
Using Assumption \emph{(i)} and rearranging yields
\[
\mathbb{E}\!\left[f^\mathcal{B}(\bm x^j,t^j;\bm q)-f^\mathcal{B}_*\right]
\le
\frac{\mathbb{E}\!\left[\|\bm x^j-\bm x^\star\|_2^2\right]-\mathbb{E}\!\left[\|\bm x^{j+1}-\bm x^\star\|_2^2\right]}{2\beta_j}
+\frac{\beta_j}{2}B^2.
\]
Summing over $j=0,\dots,T-1$, using nonincreasing $\beta_j$  to telescope with $\beta_{T-1}$, and then dividing by $T$ gives
\[
\frac{1}{T}\sum_{j=0}^{T-1}\mathbb{E}\!\left[f^\mathcal{B}(\bm x^j,t^j;\bm q)-f^\mathcal{B}_*\right]
\le
\frac{R^2}{2T\,\beta_{T-1}}+\frac{1}{2T}\sum_{j=0}^{T-1}\beta_jB^2.
\]
Finally, Jensen's inequality and convexity of $f^\mathcal{B}(\cdot,\cdot;\bm q)$ imply
$$f^\mathcal{B}(\Bar{\bm x}^T,\Bar t^T;\bm q)\le \frac1T\sum_{j=0}^{T-1} f^\mathcal{B}(\bm x^j,t^j;\bm q),$$ completing the proof.
\end{proof}

\subsection{Convergence of the \textsc{EASIeST}}

We now analyze the outer dual updates of \Cref{alg:ShortAlg}. 
Recall that $\varphi(\bm q)$ denotes the outer objective over $\mathcal Q$, and that $\psi$ is the Fermi--Dirac Legendre function used to define the Bregman divergence $D_\psi$.
We assume the algorithm is initialized at $\bm q^0\in\operatorname{int}\mathcal Q$; by construction of the Bregman proximal step with $\psi$, the iterates remain in $\operatorname{int}\mathcal Q$.

\begin{theorem}[\textsc{EASIeST} convergence in function values]\label{th:blockproxcovfvalues}
Let $\{\bm{q}^k\}_{k\in \mathbb{N}}$ be the sequence of iterates of the \Cref{alg:ShortAlg} with step‐sizes $\{\gamma_k\}_{k\in \mathbb{N}}$ and random blocks $\{\mathcal B_k\}_{k\in \mathbb{N}}$.

If each coordinate $i \in \{1,\ldots,n\}$ is sampled (without replacement) with probability $\pi_i^k > 0$, then the sequence $\{\bm{q}^k\}_{k\in \mathbb{N}}\subset \operatorname{int}\mathcal{Q}$ converges in function values as follows:
  \begin{equation}
    \label{eq:BregmanConvergence_Block}
        \varphi(\bm{q}^k) - \varphi(\bm{q}^\star) \leq \frac{D_\psi(\bm{q}^\star,\bm{q}^0)}{\sum_{j = 0}^{k-1} \gamma_j}
        \,.
    \end{equation}
In particular, if $\sum_{k=0}^\infty \gamma_k=+\infty$, then $\varphi(\bm q^k)\downarrow \varphi(\bm q^\star)$.
\end{theorem}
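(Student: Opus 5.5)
The plan is to reproduce the telescoping argument behind \Cref{thm:BregmanConvergence}, adapted to the block proximal step. Each block update will be viewed as an exact Bregman proximal step for a modified objective. The modified objective is $\varphi$ restricted to the affine slice
\[
\mathcal Q_k:=\{\bm q\in\mathcal Q:\ q_i=q_i^k\ \text{for } i\in\mathcal B_k^c\}.
\]
By \Cref{cor:BlockBregmanStep}, $\bm q^{k+1}=\argmin_{\bm q\in\mathcal Q_k}\{\gamma_k\varphi(\bm q)+D_\psi(\bm q,\bm q^k)\}$. The first-order optimality condition on $\mathcal Q_k$ is
\[
0\in\gamma_k\partial\varphi(\bm q^{k+1})+\nabla\psi(\bm q^{k+1})-\nabla\psi(\bm q^k)+\mathcal N_{\mathcal Q_k}(\bm q^{k+1}).
\]
Since $\bm q^{k+1}\in\operatorname{int}\mathcal Q$, the normal cone only contains the constraint directions $\operatorname{span}\{\mathbf 1_{\mathcal B_k}\}+\mathbb R^{\mathcal B_k^c}$.

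The first step is \emph{monotone descent}. Take $\bm q=\bm q^k\in\mathcal Q_k$ as a competitor in the block problem. This gives $\gamma_k\varphi(\bm q^{k+1})+D_\psi(\bm q^{k+1},\bm q^k)\le\gamma_k\varphi(\bm q^k)$, so $\varphi(\bm q^{k+1})\le\varphi(\bm q^k)$. This holds for every realization of the random block. Consequently, for any $k$,
\[
\Big(\sum_{j<k}\gamma_j\Big)\big(\varphi(\bm q^k)-\varphi^\star\big)\le\sum_{j<k}\gamma_j\big(\varphi(\bm q^{j+1})-\varphi^\star\big).
\]
It therefore suffices to show $\sum_{j<k}\gamma_j(\varphi(\bm q^{j+1})-\varphi^\star)\le D_\psi(\bm q^\star,\bm q^0)$.

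The second step is to obtain that bound from the three-point identity
\[
D_\psi(\bm u,\bm q^{k+1})-D_\psi(\bm u,\bm q^k)+D_\psi(\bm q^{k+1},\bm q^k)=\langle\nabla\psi(\bm q^k)-\nabla\psi(\bm q^{k+1}),\bm u-\bm q^{k+1}\rangle,
\]
combined with the subgradient inequality for $\varphi$. The comparison point must be feasible for the block problem. So at step $k$ I would take $\bm u=\bm u^k\in\mathcal Q_k$, a point agreeing with $\bm q^\star$ on $\mathcal B_k$ up to a mass correction and equal to $\bm q^k$ off the block.

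\textbf{Main obstacle.} This step is where I expect the difficulty to lie. The comparison point changes with $k$, so the Bregman terms $D_\psi(\bm u^k,\bm q^{k+1})-D_\psi(\bm u^{k},\bm q^k)$ do not telescope directly. My first attempt is to use separability of $D_\psi$ (\eqref{eq:bregmandivsep}): off the block, $q^{k+1}_i=q^k_i$, so those coordinates contribute identically to $D_\psi(\bm q^\star,\bm q^{k+1})$ and $D_\psi(\bm q^\star,\bm q^k)$. On the block, the three-point identity applied coordinatewise with $\bm q^\star$ leaves only a residual pairing between $\nabla\psi(\bm q^k)-\nabla\psi(\bm q^{k+1})$ and $\bm q^\star-\bm q^{k+1}$ along $\mathbf 1_{\mathcal B_k}$. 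That residual is handled by the mass-preservation constraint of \Cref{rem:SizeofBlock}. This should give
\[
\gamma_k\big(\varphi(\bm q^{k+1})-\varphi(\bm q^\star)\big)\le D_\psi(\bm q^\star,\bm q^k)-D_\psi(\bm q^\star,\bm q^{k+1}),
\]
possibly with the block subgradient restricted to $\mathcal B_k$. If the restriction costs a factor, I would instead argue in conditional expectation over $\mathcal B_k$, using $\pi_i^k>0$ to ensure every coordinate's contribution is accounted for. Once the displayed inequality is in hand, summing telescopes to $D_\psi(\bm q^\star,\bm q^0)$, and the first step yields \eqref{eq:BregmanConvergence_Block}. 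The final claim $\varphi(\bm q^k)\downarrow\varphi(\bm q^\star)$ then follows from monotonicity together with $\sum\gamma_k=\infty$.
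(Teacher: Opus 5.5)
Your Step~1 (monotone descent, using $\bm q^k$ as a competitor on the slice $\mathcal Q_k$) is correct and is the same as Step~1 of the paper's proof. The gap is in Step~2, exactly at the obstacle you flagged, and the fix you propose does not work. Block optimality gives some $g\in\partial\varphi(\bm q^{k+1})$ and a multiplier $\lambda_k$ with $\gamma_k g_i+\nabla_i\psi(q^{k+1}_i)-\nabla_i\psi(q^k_i)=\lambda_k$ for $i\in\mathcal B_k$. It imposes nothing on $g_{\mathcal B_k^c}$, because that part is absorbed by the normal cone of $\mathcal Q_k$. Insert this into the coordinatewise three-point identity with $\bm u=\bm q^\star$ and use the subgradient inequality. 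This gives
\[
\begin{aligned}
&D_\psi(\bm q^\star,\bm q^{k+1})-D_\psi(\bm q^\star,\bm q^k)+D_\psi(\bm q^{k+1},\bm q^k)\\
&\qquad\le\gamma_k\bigl(\varphi(\bm q^\star)-\varphi(\bm q^{k+1})\bigr)+\bigl\langle\gamma_k g_{\mathcal B_k^c}-\lambda_k\mathbf 1,\ \bm q^k_{\mathcal B_k^c}-\bm q^\star_{\mathcal B_k^c}\bigr\rangle .
\end{aligned}
\]
Mass preservation, $\mathbf 1^\top\bm q^{k+1}_{\mathcal B_k}=\mathbf 1^\top\bm q^{k}_{\mathcal B_k}$, has already been used to reach this form. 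It says nothing about $\mathbf 1^\top\bm q^\star_{\mathcal B_k}$. So the last term has no sign, is not controlled by the subproblem, and is generically of order $\gamma_k$. The per-step inequality you aim for therefore does not follow.

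In fact that inequality, and the bound \eqref{eq:BregmanConvergence_Block} itself, are false whenever $|\mathcal B_k|<n$. No pathwise or conditional-expectation argument can rescue them. Here is a counterexample.
\begin{itemize}
\item Take $n=3$, $p_i=\tfrac13$, $\alpha=\tfrac13$, so $\mathcal Q=\{\bm q\in[0,\tfrac12]^3:\mathbf 1^\top\bm q=1\}$, and $F_i\equiv c_i$ with $c=(2,1,0)$.
\item Then $\varphi(\bm q)=-\langle c,\bm q\rangle$, $\bm q^\star=(\tfrac12,\tfrac12,0)$ and $\varphi(\bm q^\star)=-\tfrac32$.
\item Start from $\bm q^0=(\tfrac13,\tfrac13,\tfrac13)$. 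A block of size two freezes one coordinate at $\tfrac13$ and the block mass at $\tfrac23$. Checking the three blocks gives $\varphi(\bm q^1)-\varphi(\bm q^\star)\ge\tfrac16$ for every realization and every $\gamma_0$.
\item On the other hand, $D_\psi(\bm q^\star,\bm q^0)=\ln(3\sqrt3/2)<1$.
\end{itemize}
So the claimed bound at $k=1$ fails for $\gamma_0\ge 6$. It fails surely, hence also in expectation, even though every $\pi_i^0>0$.

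Even the limit claim is not automatic. With $n=3$ and pairwise blocks, each step moves along some $e_i-e_j$. Take $\varphi(\bm q)=\max\{\langle a,\bm q\rangle,\langle b,\bm q\rangle\}$, i.e.\ $F_i(x)=-(xa_i+(1-x)b_i)$ with $x\in[0,1]$. Choose $a,b$ so that the kink passes through an interior $\bm q^0$ whose nonempty cone of descent directions contains none of the six directions $\pm(e_i-e_j)$; this is possible since those directions are $60^\circ$ apart. Then every block step returns $\bm q^0$, which is not optimal.

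The paper's proof has the same hole. It asserts that Step~2 of \Cref{app:thm_bregman_convergence} carries over verbatim, but that step tests the prox optimality condition at $\bm s=\bm q^\star$, which is generally infeasible for the block subproblem. The statement is justified only for $\mathcal B_k=\{1,\dots,n\}$, where it reduces to \Cref{thm:BregmanConvergence}. A genuine block result needs a different statement (and, for nonsmooth $\varphi$, additional structure), not a repaired Step~2.
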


\begin{proof} 
The proof here follows exactly that of \Cref{thm:BregmanConvergence}, noting that at each iteration $k$ the proximal operator acts only on the coordinate block $\mathcal B_k$. 

\paragraph{Step 1} Since the Bregman divergence $D_\psi$ is additive by definition, denote by $D^{\mathcal B_k}_\psi(\bm{q}^{k+1}, \bm{q}^{k})$ the Bregman divergence between $\bm{q}^k$ and $ \bm{q}^{k+1}$ on the block $\mathcal B_k$. Note that
\[D_\psi(\bm{q}^{k+1}, \bm{q}^{k}) = D^{\mathcal B_k}_\psi(\bm{q}^{k+1}, \bm{q}^{k}) + D^{\mathcal B_k^c}_\psi(\bm{q}^{k+1}, \bm{q}^{k})\, .
\]
Moreover, since $q_i^{k+1} = q_i^k$ for $i \in \mathcal B_k^c$, then $D^{\mathcal B_k^c}_\psi(\bm{q}^k, \bm{q}^{k+1}) = D^{\mathcal B_k^c}_\psi(\bm{q}^k, \bm{q}^{k}) = 0$.
With this, the monotonicity property $\varphi(\bm{q}^{k+1}) \leq \varphi(\bm{q}^{k})$ holds by the same reasoning used in Step 1 of \Cref{app:thm_bregman_convergence}.
\paragraph{Step 2} As in the previous step, observe that the Fermi--Dirac entropy $\psi$ is additive; we denote its value on block $\mathcal B_k$ at $\bm{q}^{k+1}$ by $\psi^{\mathcal B_k}(\bm{q}^{k+1})$. Evidently,
\[\psi(\bm{q}^{k+1}) = \psi^{\mathcal B_k}(\bm{q}^{k+1}) + \psi^{\mathcal B_k^c}(\bm{q}^{k+1}) \, ,
\]
and since $q_i^{k+1} = q_i^k$ for $i \in \mathcal B_k^c$, $\psi^{\mathcal B_k^c}(\bm{q}^{k+1}) = \psi^{\mathcal B_k^c}(\bm{q}^{k})$.

Since each coordinate $i$ of the decision variable $\bm{q}$ has a a strictly positive sampling probability, the sequence of iterates $\{\bm{q}^k\}_{k\in \mathbb{N}}$ is guaranteed to evolve within the $\operatorname{int}\mathcal{Q}$  making $\bm{q}^\star \in \mathcal{Q}$ feasible in the limit.

From here, the proof proceeds as in Step 2 of \Cref{app:thm_bregman_convergence}.
\end{proof}

\begin{theorem}[\textsc{EASIeST} convergence in iterates]\label{th:blockproxconvergence} 
Let $\{\bm{q}^k\}_{k\in \mathbb{N}}\subset \operatorname{int}\mathcal{Q}$ be the sequence of iterates of the \Cref{alg:ShortAlg} with positive step‐sizes $\{\gamma_k \}_{k\in \mathbb{N}}$ such that $\sum_{k=0}^\infty \gamma_k = +\infty$, and random blocks $\{\mathcal{B}_k\}_{k\in \mathbb{N}}$.

Assume that each coordinate $i \in \{1,\ldots,n\}$ is sampled (without replacement) with probability $\pi_i^k > 0$ and each subproblem is solved exactly (cf. Step 2 in \Cref{alg:ShortAlg}). Then
there exists a (random) limit point $\bm{q}^\star\in \argmin\limits_{\bm{q} \in \mathcal{Q}}  \ \varphi(\bm{q})$ such that
\begin{equation}\label{eq;a.s.convergence}
    \|\bm{q}^k - \bm{q}^\star\|_1\;\xrightarrow[k\to\infty]{}\;0
\quad\text{a.s.}
\end{equation}
\end{theorem}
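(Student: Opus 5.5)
We will follow the standard Fejér-monotonicity route for Bregman proximal point methods, adapted to the random blocks. We work with the Bregman divergence $D_\psi$ of the Fermi--Dirac entropy as a Lyapunov function and use \Cref{lem:gen-pinsker} to pass from divergences to $\ell_1$ distances. Let $\mathcal{Q}^\star:=\argmin_{\mathcal{Q}}\varphi$, which is nonempty because $\mathcal{Q}$ is compact and $\varphi$ is lower semicontinuous and convex.

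\emph{Step 1 (pathwise descent inequality).} Fix any $\bm{q}^\star\in\mathcal{Q}^\star$. Since the subproblem is solved exactly, $\bm q^{k+1}$ minimizes $\gamma_k\varphi(\bm q)+D_\psi(\bm q,\bm q^k)$ over the face $\{\bm q\in\mathcal{Q}: q_i=q_i^k,\ i\in\mathcal B_k^c\}$. The optimality condition plus the three-point identity give
\[
\gamma_k\big(\varphi(\bm q^{k+1})-\varphi(\bm u)\big)\le D_\psi(\bm u,\bm q^k)-D_\psi(\bm u,\bm q^{k+1})-D_\psi(\bm q^{k+1},\bm q^k)
\]
for every $\bm u$ in that face. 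The difficulty is that $\bm q^\star$ generally does not lie in the face. To handle this, we use instead the separable structure: on $\mathcal B_k^c$ the divergence terms cancel, so the inequality only involves the block coordinates. We apply it with $\bm u$ equal to the point of the face obtained by replacing $\bm q^k_{\mathcal B_k}$ with a rescaled block of $\bm q^\star$, chosen so that it has the correct block mass. Taking conditional expectation over $\mathcal B_k$ with $\pi_i^k>0$, we aim for a stochastic quasi-Fejér inequality
\[
\E\big[D_\psi(\bm q^\star,\bm q^{k+1})\mid\mathcal F_k\big]\le D_\psi(\bm q^\star,\bm q^k)-c_k\big(\varphi(\bm q^{k+1})-\varphi(\bm q^\star)\big)-\E\big[D_\psi(\bm q^{k+1},\bm q^k)\mid\mathcal F_k\big]+\varepsilon_k,
\]
with $c_k\gtrsim \gamma_k\min_i\pi_i^k$ and $\sum\varepsilon_k<\infty$. \Cref{th:blockproxcovfvalues} already supplies the monotonicity of $\varphi(\bm q^k)$ and the value rate, and we reuse it here.

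\emph{Step 2 (Robbins--Siegmund).} By the Robbins--Siegmund supermartingale lemma, for each fixed $\bm q^\star$ the sequence $D_\psi(\bm q^\star,\bm q^k)$ converges almost surely. Choosing a countable dense subset of $\mathcal{Q}^\star$ and using continuity of $D_\psi(\cdot,\bm r)$ on $\mathcal{Q}$, we obtain an almost sure event on which $\lim_k D_\psi(\bm q^\star,\bm q^k)$ exists for all $\bm q^\star\in\mathcal{Q}^\star$ simultaneously. By \Cref{th:blockproxcovfvalues} and $\sum\gamma_k=\infty$, we have $\varphi(\bm q^k)\to\varphi^\star$ along the whole sequence, which does not require the expected-decrease term.

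\emph{Step 3 (cluster points and Opial-type uniqueness).} Since $\mathcal{Q}$ is compact, $\{\bm q^k\}$ has cluster points. By lower semicontinuity of $\varphi$ and Step 2, every cluster point $\bar{\bm q}$ lies in $\mathcal{Q}^\star$. Take a subsequence $\bm q^{k_j}\to\bar{\bm q}$. Since $\psi$ is continuous on $\mathcal{Q}_\alpha$, we need $D_\psi(\bar{\bm q},\bm q^{k_j})\to 0$. This is the Bregman ``zone consistency'' property. It holds for Fermi--Dirac entropy, because $\psi$ is continuous on the closed box and the term $\langle\nabla\psi(\bm q^{k_j}),\bar{\bm q}-\bm q^{k_j}\rangle\to 0$: each coordinate behaves like $x\ln x$ near the boundary, and $\ln x\,(\bar q-x)\to 0$ when $\bar q=0$ or $\bar q=p_i/(1-\alpha)$ (this is checked coordinatewise). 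Hence $\lim_k D_\psi(\bar{\bm q},\bm q^k)=0$ along the full sequence, and \Cref{lem:gen-pinsker} gives $\|\bm q^k-\bar{\bm q}\|_1\le\sqrt{2D_\psi(\bar{\bm q},\bm q^k)}\to0$ almost surely, with random limit $\bm q^\star:=\bar{\bm q}$.

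\emph{Main obstacle.} The hardest step is Step 1: obtaining a Fejér inequality toward a \emph{fixed} $\bm q^\star$ when each block update conserves the block mass, so that $\bm q^\star$ is not an admissible comparison point. My first attempt is the mass-rescaled comparison point described above, combined with the lower bound on the sampling probabilities. If the error $\varepsilon_k$ cannot be made summable this way, the fallback is to use the positive selection probabilities (through a Borel--Cantelli argument over blocks) to show that the expected block-prox decrease dominates a full-prox step with a reduced stepsize.
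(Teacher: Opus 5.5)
\textbf{The gap is your Step 1, and it cannot be closed in the stated generality.} You state Step 1 only as a target. When $|\mathcal B_k|<n$, the conclusion of the theorem can actually fail. A block step compares $\bm q^{k+1}$ only with points of the slice $\{\bm q\in\mathcal Q:\ q_i=q_i^k \text{ for } i\in\mathcal B_k^c\}$. Because $\varphi$ is nonsmooth and not separable, a point can be optimal on every such slice without minimizing $\varphi$.

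Here is a concrete instance. Take $n=3$, $\bm p=(\tfrac13,\tfrac13,\tfrac13)$ and $\alpha=\tfrac12$, so $\mathcal Q=\{\bm q:\ 0\le q_i\le \tfrac23,\ \mathbf 1^\top\bm q=1\}$. Take $\mathcal X=[0,1]$ and $F(x,\omega)=(0,\ 5x-3,\ 2-5x)$. Then $\varphi(\bm q)=\max\{3q_2-2q_3,\ 3q_3-2q_2\}$ on $\mathbf 1^\top\bm q=1$.
\begin{itemize}
\item At $\bar{\bm q}=(\tfrac13,\tfrac13,\tfrac13)\in\operatorname{int}\mathcal Q$, the slices for the blocks $\{1,2\}$, $\{1,3\}$, $\{2,3\}$ give $\varphi=\tfrac13+\max\{-3t,2t\}$, $\tfrac13+\max\{2t,-3t\}$ and $\tfrac13+5|t|$.
\item Each is strictly minimized at $t=0$, where $D_\psi(\cdot,\bar{\bm q})$ also vanishes. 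Hence every two-element block step returns $\bar{\bm q}$, for every $\gamma_k$.
\item With $\bm q^0=\bar{\bm q}$ and $m_k=2$ (so $\bm\pi^k=\bm q^k$ is strictly positive), $\bm q^k\equiv\bar{\bm q}$ surely.
\item The unique minimizer, however, is $\bm q^\star=(\tfrac23,\tfrac16,\tfrac16)$, with $\varphi(\bm q^\star)=\tfrac16<\tfrac13=\varphi(\bar{\bm q})$.
\end{itemize}

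\textbf{Consequences for your Steps 1 and 2.} Inserting this example into your target inequality forces $\varepsilon_k\ge c_k/6$. Your $c_k\gtrsim\gamma_k\min_i\pi_i^k=\gamma_k/3$ satisfies $\sum_k c_k=\infty$, so $\sum_k\varepsilon_k=\infty$. No choice of mass-rescaled comparison point can avoid this. Your fallback (``the expected block step dominates a damped full prox step'') is the standard randomized-coordinate argument, and it needs the nonsmooth part to be block separable. $\varphi$ is not: it is a maximum of linear forms, further coupled by mass conservation. The same example violates the bound in \Cref{th:blockproxcovfvalues}, so the value convergence you import from it in Step 2 is not available either.

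\textbf{The paper's proof does not supply the missing idea.} Its step (i) writes the full-prox three-point inequality with test point $\bm q^\star$. That applies the block optimality condition at a point outside the slice, which is exactly the substitution you rightly refused to make. In the example, every $\bm g\in\partial\varphi(\bar{\bm q})$ satisfies $\langle\bm g,\bm q^\star-\bar{\bm q}\rangle=-\tfrac16$, so the paper's displayed inequality is false there.

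\textbf{What survives is the full-block case $|\mathcal B_k|=n$.} There the Fej\'er inequality holds pathwise for every minimizer. The countable-dense-subset device in your Step 2 then becomes unnecessary. As written, that device would also need control of $\nabla\psi(\bm q^k)$ uniformly in $k$, which is unavailable near $\operatorname{bd}\mathcal Q_\alpha$, where minimizers typically lie. Your coordinatewise zone-consistency computation in Step 3 is correct. It gives a self-contained substitute for the external polytope result the paper cites.
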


\begin{proof}
We break down the proof into 3 main steps.

\vspace{0.3cm}
\paragraph{(i) Supermartingale structure} First, we show that the stochastic sequence of Bregman divergences $\left\{D_\psi(\bm{q}^\star, \bm{q}^k)\right\}_{k\in \mathbb{N}}$ is quasi-Fej\'er monotone. 
By the three-point identity and the block-optimality condition
\begin{align*}
    D_\psi(\bm{q}^\star,\bm{q}^{k+1}) + D_\psi(\bm{q}^{k+1},\bm{q}^{k})
        &\leq
        D_\psi(\bm{q}^\star,\bm{q}^{k})
        +
        \gamma_k
        \langle
            g_\varphi(\bm{q}^{k+1}),
            \bm{q}^\star - \bm{q}^{k+1}
        \rangle\\
     \text{convexity of } \varphi \quad  & \leq D_\psi(\bm{q}^\star,\bm{q}^{k})
        + \gamma_k (\varphi(\bm{q}^\star) - \varphi(\bm{q}^{k+1}))\\
          \quad &\leq D_\psi(\bm{q}^\star,\bm{q}^{k}) \, .
\end{align*}
Thus, 
\begin{equation}\label{eq:1stepdescent}
    D_\psi(\bm{q}^\star,\bm{q}^{k+1}) \leq D_\psi(\bm{q}^\star,\bm{q}^{k}) - D_\psi(\bm{q}^{k+1},\bm{q}^{k}) 
\end{equation}
Denote $\Delta_k:=D_\psi(\bm{q}^\star,\bm{q}^{k})$, $\delta_k := D_\psi(\bm{q}^{k+1},\bm{q}^{k})$, and $\mathcal{F}_k  := \sigma(\bm{q}^k, \ldots, \bm{q}^0)$ the filtration generated by the sequence if iterates $\{\bm{q}^i\}_{i=0}^k$. Then, taking the conditional expectation of both sides of \eqref{eq:1stepdescent} yields
\begin{align*}
    \mathbb{E}[\Delta_{k+1}|\mathcal{F}_k] &\leq \Delta_k - \mathbb{E}[\delta_k|\mathcal{F}_k]\\
  (\delta_k \geq 0) \quad  &\leq \Delta_k\, . 
\end{align*}
Hence $\Delta_k \geq 0$ is a positive supermartingale and therefore by the Robbins--Siegmund theorem \cite{robbinsSiegmund1985}
\begin{enumerate}
    \item $\Delta_k \;\xrightarrow[k\to\infty]{}\;\Delta_\infty\ \geq 0  \ \text{a.s.}$
    \vspace{0.25cm}
    \item $\sum_{k=0}^\infty \mathbb E[\delta_k\mid\mathcal F_k]<\infty
\quad\Longrightarrow\quad
\delta_k\to 0\ \text{a.s.} \quad\Longrightarrow\quad D_\psi(\bm{q}^{k+1},\bm{q}^k)\;\to\;0
\quad\text{a.s.}$
\end{enumerate}

\vspace{0.3cm}
\paragraph{(ii) Cluster points} Since the iterates $\{\bm{q}^k\}$ lie in the compact set $\mathcal{Q}$, by the Bolzano--Weierstraß theorem there exists a (random) subsequence $\{\bm{q}^{k_j}\}_{j\in \mathbb{N}}$ and a point $\bm{q}^\infty \in \mathcal{Q}$ such that
\[
\bm{q}^{k_j}\;\xrightarrow[j\to\infty]{}\;\bm{q}^\infty
\quad\text{a.s.}
\]
We now show $\bm{q}^\infty \in \argmin\limits_{\bm{q}\in \mathcal{Q}} \varphi(\bm{q})$.

\begin{enumerate}
    \item[(a)] \emph{Convergence in function values.} Since $\sum_{k=0}^\infty \gamma_k = +\infty$, then by \Cref{th:blockproxcovfvalues} $\varphi(\bm{q}^k) \to \varphi(\bm{q}^\star)$ as $k \to \infty$ for any $\bm{q}^\star \in \argmin\limits_{\bm{q}\in \mathcal{Q}} \varphi(\bm{q})$.

    \item[(b)] \emph{The limit.} Since $\varphi(\bm{q}^k) \to \varphi(\bm{q}^\star)$, $\varphi(\bm{q}^{k_j}) \to \varphi(\bm{q}^\star)$ as $j \to \infty$, and so $\varphi(\bm{q}^\infty) \leq \varphi(\bm{q}^\star)$ because $\varphi$ is lower semicontinuous.  
\end{enumerate}
Thus $\bm{q}^\infty \in \argmin\limits_{\bm{q}\in \mathcal{Q}} \varphi(\bm{q})$.
\vspace{0.3cm}
\paragraph{(iii) Bregman--Opial argument and convergence} Now, let us take a  subsequence $\{\bm{q}^{k_j}\}_{j\in \mathbb{N}}$ and a point $\bm{q}^\star \in \mathcal{Q}$ such that
\[
\bm{q}^{k_j}\;\xrightarrow[j\to\infty]{}\;\bm{q}^\star
\quad\text{a.s.}
\]
From part \textit{(ii)}, we have that $\bm{q}^\star \in \argmin\limits_{\bm{q}\in \mathcal{Q}} \varphi(\bm{q})$. Then, since $\mathcal{Q}$ is a polytope, \cite[Theorem 1]{PAUWELS2024107183} implies that 
\[D_\psi(\bm{q}^\star,\bm{q}^{k_j}) \;\xrightarrow[j\to\infty]{}\; D_\psi(\bm{q}^\star,\bm{q}^\star) = 0 \text{ a.s.}
\]
Therefore, since $\Delta_k \to \Delta_\infty$ by \textit{(i)} and $D_\psi(\bm{q}^\star,\bm{q}^{k_j}) \to 0$, we conclude that 
\[D_\psi(\bm{q}^\star,\bm{q}^{k}) \to 0 \text{ a.s. as } k \to \infty\, .
\] 
Finally, invoking \Cref{lem:gen-pinsker} yields 
\[
 \|\bm{q}^k - \bm{q}^\star\|_1\;\xrightarrow[k\to\infty]{}\;0
\quad\text{a.s.}
\]
which concludes the proof.
\end{proof}

\section{Practical implementation}\label{sec:PracticalImplementation}
This section discusses the practical implementation of~\Cref{alg:ShortAlg}. We consider two settings: \emph{(i) deterministic}, where CVaR, smoothed CVaR, and their respective (sub)gra\-dients are computed either exactly or via sample average approximation (SAA); and \emph{(ii) stochastic}, where only stochastic (e.g., mini-batch) estimates of the relevant quantities are available. Our focus here is on the practical implementation of the proposed method. For completeness, the deterministic and stochastic baseline algorithms used for performance comparisons in~\Cref{sec:NumericalExperiments}, together with the corresponding CVaR subgradient constructions, are deferred to~\Cref{app:baselines}.

\subsection{Deterministic setting}\label{sub:DeterministicSetting}
We first describe the deterministic implementation of \textsc{EASIeST}. For numerical comparisons, we also use a deterministic baseline subgradient method with adaptive stepsize control based on~\cite{uryasev1983adaptive}; its pseudocode and the explicit deterministic CVaR subgradient formula are given in~\Cref{app:det_baseline}.

\paragraph{Inexact Bregman proximal point method}
Before presenting \Cref{alg:inexact-easi-det}, we discuss a key practical aspect: inexactness.
Recall $\varphi(\bm{q})$ defined in \eqref{eq:phi_for_sub}. 
The (exact) Bregman proximal point method computes $\mathbold{q}^{k+1}$ by solving
\[
\mathbf{0} \in \partial \varphi(\mathbold{q}^{k+1}) + \frac{1}{\gamma_k}\big(\nabla\psi(\mathbold{q}^{k+1})-\nabla\psi(\mathbold{q}^{k})\big),
\]
equivalently,
\[
\mathbold{q}^{k+1}\in\argmin_{\bm{q}}\Big\{\varphi(\bm{q})+\frac{1}{\gamma_k}D_\psi(\bm{q},\mathbold{q}^k)\Big\} \, ,
\]
which in our case leads to the subproblem \eqref{eq:Prox_RegularizedSubproblem} (or \eqref{eq:Prox_RegularizedSubproblem_block}). 
In practice, this subproblem is rarely solved exactly, and one therefore replaces the exact optimality condition by an \emph{inexact} one. Two widely used criteria are due to Eckstein \cite{eckstein1998approximate} and Solodov--Svaiter \cite{solodov2000inexact}.

\paragraph{Eckstein-type inexactness}
Eckstein models the inexactness as an additive error in the Bregman optimality relation:
\[
\mathbold{e}^k + \nabla\psi(\mathbold{q}^k)
= \nabla\psi(\mathbold{q}^{k+1}) + \gamma_k \mathbold{g}^{k+1},
\qquad \mathbold{g}^{k+1}\in\partial\varphi(\mathbold{q}^{k+1}),
\]
together with summability conditions on the error sequence, e.g.,
$\sum_{k=1}^\infty \|\mathbold{e}^k\|_2<\infty$ and
$\sum_{k=1}^\infty \langle \mathbold{e}^k,\mathbold{q}^k\rangle<\infty$.
Intuitively, $\mathbold{e}^k$ quantifies the residual in the dual (mirror) variable
$\nabla\psi(\mathbold{q})$: the method behaves like the exact Bregman proximal point method up to a perturbation whose total accumulated effect is finite, which is sufficient for convergence of the outer iterates under standard assumptions.

\paragraph{Solodov--Svaiter relative error criterion}
Solodov and Svaiter propose a constructive \emph{relative} termination rule based on the Bregman geometry.
Given a candidate subgradient $\mathbold{g}^{k+1}\in\partial\varphi(\mathbold{q}^{k+1})$, define the
\emph{exact} Bregman-prox point associated with $(\mathbold{q}^k,\mathbold{g}^{k+1})$ as
\[
\mathbold{r}
:=\nabla\psi^{-1}\Big(\nabla\psi(\mathbold{q}^k)-\gamma_k \mathbold{g}^{k+1}\Big).
\]
Then the inexactness requirement is
\[
D_\psi(\mathbold{q}^{k+1},\mathbold{r}) \;\le\; \rho^2\,D_\psi(\mathbold{q}^{k+1},\mathbold{q}^k),
\qquad \rho\in(0,1).
\]
This condition enforces that $\mathbold{q}^{k+1}$ is significantly closer (in Bregman distance) to the
ideal mirror step $\mathbold{r}$ than to the previous iterate $\mathbold{q}^k$, i.e., the inner solve makes
\emph{relative progress} measured in the same divergence that defines the proximal regularization.
As a result, one obtains a globally convergent inexact Bregman proximal point method without requiring absolute summability of residuals or preconstructed error sequences, making the criterion appealing for stopping rules in iterative inner solvers.

\paragraph{A computable Solodov--Svaiter-type criterion}
We adopt a Solodov--Svaiter-type \emph{relative} rule, but the original criterion is not directly implementable
in our setting. Indeed, $\varphi$ is defined through an inner minimization (cf.~\eqref{eq:phi_for_sub}),
\[
\varphi(\bm q)=-\min_{\bm x\in\cX} \E_{\bm q}[F(\bm x,\omega)].
\]
Consequently, by Danskin's theorem, see e.g.,  \cite{danskin1967, bertsekas1999nonlinear}, evaluating (or selecting) $\mathbold{g}^{k+1}\in\partial\varphi(\mathbold{q}^{k+1})$
requires access to a minimizer of the auxiliary problem
\[
\min_{\bm x\in\cX}\ \E_{\bm q}[F(\bm x,\omega)],
\]
which would either force an (almost) exact inner solve or introduce an additional auxiliary optimization that we aim
to avoid.

To quantify the effect of reusing an inner solution obtained for a nearby probability vector, define\footnote{assuming $F(\cdot,\omega)$ is differentiable for all $\omega \in \Omega$.}
\[
f_{\bm q}(\bm x)\coloneqq \E_{\bm q}\!\big[F(\bm x,\omega)\big],
\qquad
\nabla f_{\bm q}(\bm x)=\E_{\bm q}\!\big[\nabla_{\bm x}F(\bm x,\omega)\big],
\]
and, for any $\beta>0$, the projected gradient mapping
\[
\mathcal{G}_\beta(\bm x;\bm q)
\coloneqq
\frac{1}{\beta}\Big(\bm x-\operatorname{proj}_{\cX}\big(\bm x-\beta\,\nabla f_{\bm q}(\bm x)\big)\Big),
\]
where a small $\|\mathcal{G}_\beta(\bm x;\bm q)\|_2$ certifies near-stationarity of $\bm x$ for
$\min_{x\in\cX} f_{\bm q}(x)$.
The following result quantifies how $\|\mathcal{G}_\beta(\bm x;\bm q)\|_2$ changes when the probability vector $\bm q$
is perturbed.

\begin{proposition}\label{prop:PG_stability_change_measure}
Let $F:\cX\times\Omega\to\mathbb{R}$ be differentiable in $\bm x$ for all $\omega\in\Omega$.
Assume that the scenario-gradient is essentially bounded uniformly over $\cX$, i.e., there exists
$G_{\cX}<\infty$ such that
\begin{equation}\label{eq:esssup_grad_bound}
\operatorname*{ess\,sup}_{\omega\in\Omega}\ \|\nabla_{\bm x}F(\bm x,\omega)\|_2 \le G_{\cX},
\qquad \forall\,\bm x\in\cX.
\end{equation}
Then, for any $\bm q,\tilde{\bm q}\in\mathcal{Q}$, any $\beta>0$, and any $\bm x\in\cX$,
\begin{equation}\label{eq:PG_stability}
\big\|\mathcal{G}_\beta(\bm x;\bm q)\big\|_2
\le
\big\|\mathcal{G}_\beta(\bm x;\tilde{\bm q})\big\|_2
+
G_{\cX}\sqrt{2D_\psi(\bm q,\tilde{\bm q})}.
\end{equation}
\end{proposition}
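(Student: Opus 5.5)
The plan is to compare the two projected gradient mappings directly through the triangle inequality, bound their difference by the difference of the expected gradients, and then control that by total variation and the generalized Pinsker inequality (\Cref{lem:gen-pinsker}).

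\emph{Step 1 (Lipschitz dependence on the gradient).} Fix $\bm x\in\cX$ and $\beta>0$. By definition,
\[
\mathcal{G}_\beta(\bm x;\bm q)-\mathcal{G}_\beta(\bm x;\tilde{\bm q})
=\frac{1}{\beta}\Big(\operatorname{proj}_{\cX}\big(\bm x-\beta\nabla f_{\tilde{\bm q}}(\bm x)\big)-\operatorname{proj}_{\cX}\big(\bm x-\beta\nabla f_{\bm q}(\bm x)\big)\Big).
\]
The Euclidean projection onto the closed convex set $\cX$ is nonexpansive. Hence the norm of this difference is at most $\|\nabla f_{\bm q}(\bm x)-\nabla f_{\tilde{\bm q}}(\bm x)\|_2$, and the factor $\beta$ cancels. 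The triangle inequality then gives $\|\mathcal{G}_\beta(\bm x;\bm q)\|_2\le\|\mathcal{G}_\beta(\bm x;\tilde{\bm q})\|_2+\|\nabla f_{\bm q}(\bm x)-\nabla f_{\tilde{\bm q}}(\bm x)\|_2$.

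\emph{Step 2 (change of measure).} In the finite setting,
\[
\nabla f_{\bm q}(\bm x)-\nabla f_{\tilde{\bm q}}(\bm x)=\sum_{i=1}^n (q_i-\tilde q_i)\nabla_{\bm x}F_i(\bm x).
\]
Bounding each gradient by $G_{\cX}$ via \eqref{eq:esssup_grad_bound} gives $\|\nabla f_{\bm q}-\nabla f_{\tilde{\bm q}}\|_2\le G_{\cX}\|\bm q-\tilde{\bm q}\|_1$. If $p_i>0$ for all $i$, the essential supremum is a true maximum over $\Omega$. If some $p_i=0$, then $\mathcal{Q}$ forces $q_i=\tilde q_i=0$ for those $i$, so they do not contribute.

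\emph{Step 3 (Pinsker).} \Cref{lem:gen-pinsker} gives $\tfrac12\|\bm q-\tilde{\bm q}\|_1^2\le D_\psi(\bm q,\tilde{\bm q})$, that is, $\|\bm q-\tilde{\bm q}\|_1\le\sqrt{2D_\psi(\bm q,\tilde{\bm q})}$. Combining this with Steps 1 and 2 yields \eqref{eq:PG_stability}.

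\emph{Main obstacle.} The main subtlety is the domain of Step 3. \Cref{lem:gen-pinsker} is stated for $\bm q,\tilde{\bm q}\in\operatorname{int}\mathcal{Q}$, while the proposition allows $\bm q,\tilde{\bm q}\in\mathcal{Q}$. If $\tilde{\bm q}\notin\operatorname{int}\mathcal{Q}$, the convention $D_\psi=+\infty$ makes the bound trivial. If $\tilde{\bm q}\in\operatorname{int}\mathcal{Q}$ and $\bm q\in\operatorname{bd}\mathcal{Q}$, the decomposition \eqref{eq:pinsker_chain} still holds with the convention $0\ln0=0$. The extra Fermi--Dirac term is then a generalized KL divergence between the complementary masses $\tfrac{p_i}{1-\alpha}-q_i$ and $\tfrac{p_i}{1-\alpha}-\tilde q_i$. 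These masses have equal totals because $\mathbf 1^\top\bm q=\mathbf 1^\top\tilde{\bm q}=1$, so the term is nonnegative by the log-sum inequality. Classical Pinsker also holds on the closed simplex. Alternatively, one can pass to the limit by continuity of both sides in $\bm q$ along the segment toward $\tilde{\bm q}$.
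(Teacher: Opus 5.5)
Your proof is correct and follows the paper's argument step for step: the triangle inequality, nonexpansiveness of $\operatorname{proj}_{\cX}$ to reduce to $\|\nabla f_{\bm q}(\bm x)-\nabla f_{\tilde{\bm q}}(\bm x)\|_2$, the bound $G_{\cX}\|\bm q-\tilde{\bm q}\|_1$, and finally \Cref{lem:gen-pinsker}. Your extra treatment of boundary points of $\mathcal{Q}$ is a valid refinement of a point the paper's proof passes over, since the lemma is stated only on $\operatorname{int}\mathcal{Q}$; it relies on the complementary masses having equal totals $\alpha/(1-\alpha)$ and on the log-sum inequality.
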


\begin{proof}
By the triangle inequality,
\[
\|\mathcal{G}_\beta(\bm x;\bm q)\|_2
\le
\|\mathcal{G}_\beta(\bm x;\tilde{\bm q})\|_2
+
\|\mathcal{G}_\beta(\bm x;\bm q)-\mathcal{G}_\beta(\bm x;\tilde{\bm q})\|_2.
\]
Using nonexpansiveness of the Euclidean projection $\proj_{\cX}$,
\begin{align*}
\|\mathcal{G}_\beta(\bm x;\bm q)-\mathcal{G}_\beta(\bm x;\tilde{\bm q})\|_2
&=
\frac{1}{\beta}\Big\|
\proj_{\cX}\big(\bm x-\beta\nabla f_{\bm q}(\bm x)\big)
-
\proj_{\cX}\big(\bm x-\beta\nabla f_{\tilde{\bm q}}(\bm x)\big)
\Big\|_2 \\
&\le
\frac{1}{\beta}\Big\|
\big(\bm x-\beta\nabla f_{\bm q}(\bm x)\big)
-
\big(\bm x-\beta\nabla f_{\tilde{\bm q}}(\bm x)\big)
\Big\|_2 \\
&=
\|\nabla f_{\bm q}(\bm x)-\nabla f_{\tilde{\bm q}}(\bm x)\|_2.
\end{align*}
In the finite-scenario setting, $\nabla f_{\bm q}(\bm x)=\sum_{i=1}^n q_i\nabla F_i(\bm x)$, hence
\[
\nabla f_{\bm q}(\bm x)-\nabla f_{\tilde{\bm q}}(\bm x)
=
\sum_{i=1}^n (q_i-\tilde q_i)\nabla F_i(\bm x).
\]
Therefore, using $\|\nabla F_i(\bm x)\|_2\le \max_{j}\|\nabla F_j(\bm x)\|_2 \le G_{\cX}$,
\[
\|\nabla f_{\bm q}(\bm x)-\nabla f_{\tilde{\bm q}}(\bm x)\|_2
\le
\sum_{i=1}^n |q_i-\tilde q_i|\,\|\nabla F_i(\bm x)\|_2
\le
G_{\cX}\|\bm q-\tilde{\bm q}\|_1.
\]
Finally, by \Cref{lem:gen-pinsker}, $\|\bm q-\tilde{\bm q}\|_1 \le \sqrt{2D_\psi(\bm q,\tilde{\bm q})}$, which yields
\[
\|\mathcal{G}_\beta(\bm x;\bm q)\|_2
\le
\|\mathcal{G}_\beta(\bm x;\tilde{\bm q})\|_2
+
G_{\cX}\sqrt{2D_\psi(\bm q,\tilde{\bm q})}.
\]
\end{proof}
\begin{remark}\label{rem:PG_stability_l2}
Under the assumptions of \Cref{prop:PG_stability_change_measure}, one also has
\[
\|\mathcal G_\beta(\bm x;\bm q)\|_2
\le
\|\mathcal G_\beta(\bm x;\tilde{\bm q})\|_2
+
G_{\cX}\sqrt{n}\,\|\bm q-\tilde{\bm q}\|_2,
\]
for any \(\bm q,\tilde{\bm q}\in\mathcal Q\), any \(\beta>0\), and any \(\bm x\in\cX\). This follows from the estimate
\[
\|\nabla f_{\bm q}(\bm x)-\nabla f_{\tilde{\bm q}}(\bm x)\|_2
\le
G_{\cX}\|\bm q-\tilde{\bm q}\|_1
\le
G_{\cX}\sqrt{n}\,\|\bm q-\tilde{\bm q}\|_2.
\]
Such a bound is important for controlling inexactness in the classical proximal point setup \cite{rockafellar1976monotone, rockafellar1976augmented}, which leads to quadratic epi-regularization of the CVaR \cite{kouri2022primal}. Note that the above bound depends explicitly on \(n\). Consequently, the resulting estimate deteriorates with the dimension, which makes the bound in \Cref{prop:PG_stability_change_measure} based on the Bregman divergence more attractive.
\end{remark}

At outer iteration $k$, applying the original Solodov--Svaiter (SS) test at an approximate proximal point $\bm q$
would require a subgradient $\bm g\in\partial\varphi(\bm q)$ and thus, by \eqref{eq:phi_for_sub}, solving the auxiliary
problem $\min_{\bm x\in\cX}\E_{\bm q}[F(\bm x,\omega)]$. To avoid this extra solve, we generate two successive candidate
pairs $(\bm q_1,\bm x_1)$ and $(\bm q_2,\bm x_2)$ from the same inner routine, where $(\bm q_2,\bm x_2)$ is a refinement
of $(\bm q_1,\bm x_1)$, and we accept $(\bm q_1,\bm x_2)$ only when two \emph{simultaneous} conditions hold.

\smallskip
\noindent\textit{(i) Modified SS condition.}
We enforce the computable SS surrogate
\[
D_\psi(\bm q_1,\bm q_2)\ \le\ \rho_k^2\,D_\psi(\bm q_1,\bm q^k),
\qquad \rho_k\in(0,1),\ \rho_k\downarrow 0,
\]
which ensures that the refinement $\bm q_2$ is relatively close to $\bm q_1$ compared to the outer step size.

\smallskip
\noindent\textit{(ii) Gradient (stationarity) condition.}
We also require that the refined inner point $\bm x_2$ satisfies the projected-gradient bound
\[
\big\|\mathcal{G}_\beta(\bm x_2;\bm q_2)\big\|_2
\leq G_{\cX} \rho_k\sqrt{2D_\psi(\bm q_1,\bm q^k)}.
\]
By Proposition~\ref{prop:PG_stability_change_measure}, this implies
\[
\big\|\mathcal{G}_\beta(\bm x_2;\bm q_1)\big\|_2
\ \le\
\big\|\mathcal{G}_\beta(\bm x_2;\bm q_2)\big\|_2
+G_{\cX}\sqrt{2D_\psi(\bm q_1,\bm q_2)}
\ \le\
2G_{\cX}\rho_k\sqrt{2\,D_\psi(\bm q_1,\bm q^k)}.
\]
Hence, if $\rho_k\to 0$, the projected-gradient residual associated with the auxiliary problem
$\min_{x\in\cX}\E_{\bm q_1}[F(x,\omega)]$ is driven to zero \emph{asymptotically}, recovering the auxiliary optimality
conditions without explicitly solving that problem.

\begin{algorithm2e}[h!]
\DontPrintSemicolon
\caption{\label{alg:inexact-easi-det}\textsc{EASIeST} for CVaR Optimization (``deterministic'')}
\SetKwInOut{Input}{Input}
\SetKwInOut{Params}{Parameters}
\SetKwInOut{Output}{Output}
\SetKwInOut{Define}{Define}

\Input{\( \bm{x}^{0} \in \cX\) and \( \bm{s}^{0}\in\mathbb{R}^n\), $\bm q^{0} = \bm \sigma(\bm s^{0})$.}
\Params{\( \gamma_0,\beta>0, \ c_\gamma>1\), \( \varepsilon_g, \varepsilon_{TV}, \varepsilon_q >0\), \( \{\rho_k\}\in(0,1)\), $|\mathcal{B}|>1$, \(K,J\in\mathbb{N}\).}
\Define{
Sigmoid map: \( \sigma_i(s)\coloneqq \dfrac{p_i}{1-\alpha}\,\dfrac{e^{s}}{1+e^{s}} \);
vector form \( \bm{\sigma}(\bm{s}) \).\newline
}
\Output{\( \bm{x}^\star \in \cX\), \( \bm{q}^\star=\bm{\sigma}(\bm{s}^\star) \in \mathcal{Q}\), and \( \CVaR_\alpha(F(\bm{x}^\star,\omega))=\mathbb{E}_{\bm{q}^\star}[F(\bm{x}^\star,\omega)]\).}

\BlankLine
\For(\tcp*[f]{Outer loop}){$k=0,1,2,\ldots,K$}{
\BlankLine
\emph{Step 0.} Initialize $\bm x^{k,0} = \bm x^k,  \ \bm s^{k,0} = \bm s^k, \  \bm q^{k,0} = \bm q^k;$

\BlankLine
\emph{Step 1.}  Generate $\mathcal{B}_k = \operatorname{BlockSampler}(\bm q^k, |\mathcal{B}|)$ and set  $\delta_k = \sum_{i \in \mathcal{B}_k} q^k_i;$

\BlankLine
\For(\tcp*[f]{Inner loop}){$j=0,1,2,\ldots,J$}{
\BlankLine
\emph{Step 2.} Set $(\bm s^{k,j+1},\bm q^{k,j+1})=\operatorname{ProxUpdate}(\bm x^{k,j},\bm s^k,\bm q^k,\mathcal B_k,\gamma_k,\delta_k)$;\;

\BlankLine
\emph{Step 3.} Set $\mathcal{I}_j = \{i:q^{k,j+1}_i\geq\varepsilon_q\}$ and evaluate $\{\nabla F_i(\bm x^{k,j})\}_{i\in \mathcal{I}_j};$\;

\BlankLine
\emph{Step 4.} Compute  \( \bm{g}^{k,j} = \sum\limits_{i \in \mathcal{I}_j} q^{k,j+1}_i \,\nabla F_i(\bm{x}^{k,j});\)\;

\BlankLine
\emph{Step 5.} Set $G_\cX^j = \max\limits_{i \in \mathcal{I}_j} \|\nabla F_i(\bm{x}^{k,j})\|_2$ and $D_j = \rho_k^2D_\psi( \bm{q}^{k,j}, \bm{q}^{k});$

\BlankLine
\If(){$ \|\mathcal{G}_\beta(\bm x^{k,j}; \bm q^{k,j+1})\|_2\le G^j_\cX\sqrt{2D_j} \ \ \textbf{and}\ \ D_\psi\!\left(\bm{q}^{k,j},\bm{q}^{k,j+1}\right) \le D_j$}{
\textbf{break}
}

\BlankLine
\emph{Step 6.} Set $\bm{x}^{k,j+1}  =  \operatorname{Update}(\bm{x}^{k,j}, \bm{g}^{k,j},\textrm{solver parameters});$ \;
}

\BlankLine
\emph{Step 7.} If the inner loop terminates at \emph{Step 5} on index \(j\), set \( \bm{s}^{k+1}  =  \bm{s}^{k,j+1}\), \( \bm{q}^{k+1} = \bm{q}^{k,j+1}\), and \( \bm{x}^{k+1} = \bm{x}^{k, j}\); otherwise, set \( \bm{s}^{k+1}  =  \bm{s}^{k,j+1}\), \( \bm{q}^{k+1} = \bm{q}^{k,j+1}\), and \( \bm{x}^{k+1} = \bm{x}^{k, j+1}\);

\BlankLine
\emph{Step 8.} Update $\gamma_{k+1} = c_{\gamma}\gamma_k;$\;

\BlankLine
\If(){$ \|\mathcal{G}_\beta(\bm x^{k+1}; \bm q^{k+1})\|_2 \le \varepsilon_g \ \ \textbf{and}\ \ \frac{1}{2}\|\bm q^{k+1} - \bm q^{k}\|_1 \le \varepsilon_{TV}$}{
\textbf{break}
}
}
\Return \( \bm{x}^\star \gets \bm{x}^{k+1}, \ \bm{q}^\star \gets \bm{q}^{k+1}\), and \( \CVaR_\alpha(F(\bm{x}^\star,\omega))=\mathbb{E}_{\bm{q}^\star}[F(\bm{x}^\star,\omega)]\).
\end{algorithm2e}

\Cref{alg:inexact-easi-det} is the practical version of \Cref{alg:ShortAlg} in the deterministic setting.
Each outer iteration $k$ samples a block $\mathcal{B}_k$ of scenarios according to the current
dual distribution $\bm q^k$ and updates only the corresponding \emph{logits}
$s^k_i:= \nabla \psi (q^k_i)$ for $i \in \mathcal{B}_k$, while keeping $q^k_i$ fixed for $i\notin\mathcal{B}_k$ and preserving the block mass
$\delta_k=\sum_{i\in\mathcal{B}_k}q_i^k$ via a scalar shift (cf. \Cref{alg:block_prox_update}).
The primal variable is then updated by a \emph{user-specified} routine $\bm x^{k,j+1}=\operatorname{Update}(\bm x^{k,j},\bm g^{k,j},\ldots)$ (e.g., projected gradient, quasi-Newton, or any convex solver), so the algorithm can leverage problem structure and available inner solvers.
Inexactness is controlled by two \emph{simultaneous} stopping tests: a computable Solodov--Svaiter-type condition
based on two consecutive dual iterates w.r.t. the Bregman distance and a projected-gradient stationarity test in $\bm x$.

\begin{remark}\label{rem:prox_stationarity}
The definition of $\mathcal{G}_\beta(\bm x;\bm q)$ above assumes that $F(\cdot,\omega)$ is differentiable, so that
$\nabla f_{\bm q}(\bm x)=\E_{\bm q}[\nabla_{\bm x}F(\bm x,\omega)]$ is well-defined. If $F(\cdot,\omega)$ is
convex but possibly \emph{nondifferentiable}, we replace the projected-gradient mapping by a proximal-gradient
(resolvent) stationarity measure.

Fix $\beta>0$ and select any measurable subgradient field
$\bm g_F(\bm x,\omega)\in \partial_{\bm x}F(\bm x,\omega)$. Define
\[
\partial f_{\bm q}(\bm x)=\E_{\bm q}\!\big[\partial_{\bm x}F(\bm x,\omega)\big]
\quad\text{and pick}\quad
\bm v_{\bm q}(\bm x)\in \partial f_{\bm q}(\bm x)
\ \ \text{(e.g., } \bm v_{\bm q}(\bm x)=\E_{\bm q}[\bm g_F(\bm x,\omega)]\text{)}.
\]
We then define the \emph{proximal stationarity mapping}
\[
\mathcal{P}_\beta(\bm x;\bm q)
\coloneqq
\frac{1}{\beta}\Big(\bm x-\operatorname{prox}_{\beta\,\delta_{\cX}}\big(\bm x-\beta\,\bm v_{\bm q}(\bm x)\big)\Big)
=
\frac{1}{\beta}\Big(\bm x-\operatorname{proj}_{\cX}\big(\bm x-\beta\,\bm v_{\bm q}(\bm x)\big)\Big),
\]
where $\delta_{\cX}$ is the indicator of $\cX$ and $\operatorname{prox}_{\beta\,\delta_{\cX}}=\operatorname{proj}_{\cX}$.
In particular, $\mathcal{P}_\beta(\bm x;\bm q)=\bm 0$ if and only if
\[
\bm 0 \in \partial f_{\bm q}(\bm x)+\mathcal{N}_{\cX}(\bm x),
\]
i.e., $\bm x$ is a (first-order) stationary point of $\min_{\bm x\in\cX} f_{\bm q}(\bm x)$ in the standard
convex-analytic sense. When $f_{\bm q}$ is differentiable, choosing $\bm v_{\bm q}(\bm x)=\nabla f_{\bm q}(\bm x)$
recovers the projected-gradient mapping $\mathcal{G}_\beta$.
\end{remark}

\subsection{Stochastic setting}
We next consider the stochastic setting, where only mini-batch estimates of the relevant quantities are available. For numerical comparisons, we also use a stochastic adaptive-subgradient baseline based on~\cite{uryasev1983adaptive}; its pseudocode and the corresponding mini-batch CVaR subgradient construction are deferred to~\Cref{app:stoch_baseline}.
\begin{algorithm2e}[h!]
\DontPrintSemicolon
\caption{\label{alg:inexact-easi-stoch}\textsc{EASIeST} for CVaR Optimization (``stochastic'')}
\SetKwInOut{Input}{Input}
\SetKwInOut{Params}{Parameters}
\SetKwInOut{Output}{Output}
\SetKwInOut{Define}{Define}

\Input{\( \bm{x}^{0} \in \cX\) and \( \bm{s}^{0}\in\mathbb{R}^n\), $\bm q^{0} = \bm \sigma(\bm s^{0})$.}
\Params{\( \gamma_0,\beta>0, \ c_\gamma>1\), \( \varepsilon_g, \varepsilon_{TV}, \varepsilon_q >0\), \( \{\rho_k\}\in(0,1)\), $|\mathcal{B}|>1$, \(K,J, m\in\mathbb{N}\).}
\Define{
Sigmoid map: \( \sigma_i(s)\coloneqq \dfrac{p_i}{1-\alpha}\,\dfrac{e^{s}}{1+e^{s}} \);
vector form \( \bm{\sigma}(\bm{s}) \).\newline
}
\Output{\( \bm{x}^\star \in \cX\), \( \bm{q}^\star=\bm{\sigma}(\bm{s}^\star) \in \mathcal{Q}\), and \( \CVaR_\alpha(F(\bm{x}^\star,\omega))=\mathbb{E}_{\bm{q}^\star}[F(\bm{x}^\star,\omega)]\).}

\BlankLine
\For(\tcp*[f]{Outer loop}){$k=0,1,2,\ldots,K$}{
\BlankLine
\emph{Step 0.} Initialize $\bm x^{k,0} = \bm x^k,  \ \bm s^{k,0} = \bm s^k, \  \bm q^{k,0} = \bm q^k;$

\BlankLine
\emph{Step 1.}  Generate $\mathcal{B}_k = \operatorname{BlockSampler}(\bm q^k, |\mathcal{B}|)$ and set $\delta_k = \sum_{i \in \mathcal{B}_k} q^k_i;$

\BlankLine
\For(\tcp*[f]{Inner loop}){$j=0,1,2,\ldots,J$}{
\BlankLine
\emph{Step 2.} Sample indices $\xi_1,\ldots,\xi_m \stackrel{\mathrm{iid}}{\sim} \bm q^k$ and construct index sets $\Xi_j = \{\xi_1,\ldots,\xi_m\}, \ \Xi_j^{in} = \{i \in \Xi_j: i \in \mathcal{B}_k \cap \Xi_j\}, \ \Xi_j^{out} = \{i \in \Xi_j: i \notin \mathcal{B}_k \cap \Xi_j\}$ and $w_i = \frac{\# \text{ of times index } i \in \Xi_j \text{ repeats}}{|\Xi_j|};$\;

\BlankLine
\emph{Step 3.} Set $(\bm s^{k,j+1},\bm q^{k,j+1})=\operatorname{ProxUpdate}(\bm x^{k,j},\bm s^{k},\bm q^k,\mathcal B_k,\gamma_k,\delta_k)$;\;

\BlankLine
\emph{Step 4.} Set $\mathcal{I}_j = \{i \in \Xi_j^{in}:q^{k,j+1}_i\geq\varepsilon_q\}$ and evaluate $\{\nabla F_i(\bm x^{k,j})\}_{i\in \mathcal{I}_j \cup \Xi_j^{out}};$\;

\BlankLine
\emph{Step 5.} Set  $\bm{g}^{k,j} = \sum\limits_{i \in \Xi_j^{out}} w_i \,\nabla F_i(\bm{x}^{k,j}) + \sum\limits_{i \in \mathcal{I}_j} w_i \frac{q_i^{k,j+1}}{q_i^{k}}\,\nabla F_i(\bm{x}^{k,j});$\;

\BlankLine
\emph{Step 6.} If $|\mathcal{I}_j| \neq 0$ define $\mathcal{S} = \mathcal{I}_j$ else $\mathcal{S} = \Xi_j^{out}$. Set $G_\cX^j = \max\limits_{i \in \mathcal{S}} \|\nabla F_i(\bm{x}^{k,j})\|_2$ and $D_j = \rho_k^2D_\psi( \bm{q}^{k,j}, \bm{q}^{k});$

\BlankLine
\If(){$ \|\hat{\mathcal{G}}_\beta^{(m)}(\bm x^{k,j}; \bm q^{k,j+1}\mid \bm q^k)\|_2\le G^j_\cX\sqrt{2D_j} \ \ \textbf{and}\ \ D_\psi\!\left(\bm{q}^{k,j},\bm{q}^{k,j+1}\right) \le D_j$}{
\textbf{break}
}

\BlankLine
\emph{Step 7.} Set $\bm{x}^{k,j+1}  =  \operatorname{Update}(\bm{x}^{k,j}, \bm{g}^{k,j},\textrm{solver parameters});$ \;
}

\BlankLine
\emph{Step 8.}   If the inner loop terminates at \emph{Step 6} on index \(j\), set \( \bm{s}^{k+1}  =  \bm{s}^{k,j+1}\), \( \bm{q}^{k+1} = \bm{q}^{k,j+1}\), and \( \bm{x}^{k+1} = \bm{x}^{k, j}\); otherwise, set \( \bm{s}^{k+1}  =  \bm{s}^{k,j+1}\), \( \bm{q}^{k+1} = \bm{q}^{k,j+1}\), and \( \bm{x}^{k+1} = \bm{x}^{k, j+1}\);

\BlankLine
\emph{Step 9.} Update $\gamma_{k+1} = c_{\gamma}\gamma_k;$\;

\BlankLine
\If(){$ \|\hat{\mathcal{G}}^{(m)}_\beta(\bm x^{k+1}; \bm q^{k+1}\mid\bm q^k)\|_2 \le \varepsilon_g \ \ \textbf{and}\ \ \frac{1}{2}\|\bm q^{k+1} - \bm q^{k}\|_1 \le \varepsilon_{TV}$}{
\textbf{break}
}
}
\Return \( \bm{x}^\star \gets \bm{x}^{k+1}, \ \bm{q}^\star \gets \bm{q}^{k+1}\), and \( \CVaR_\alpha(F(\bm{x}^\star,\omega))=\mathbb{E}_{\bm{q}^\star}[F(\bm{x}^\star,\omega)]\).
\end{algorithm2e}

To define the stochastic stationarity tests used in \Cref{alg:inexact-easi-stoch}, let $\xi_1,\ldots,\xi_m \stackrel{\mathrm{iid}}{\sim} \bm q$ and define the mini-batch estimator
\[
\widehat{\nabla} f_{\bm q}^{(m)}(\bm x)
\;\coloneqq\;
\frac{1}{m}\sum_{r=1}^m \nabla F_{\xi_r}(\bm x),
\qquad\text{so that}\qquad
\mathbb E\!\left[\widehat{\nabla} f_{\bm q}^{(m)}(\bm x)\right]=\nabla f_{\bm q}(\bm x).
\]
We then define the mini-batch stochastic reduced (projected-gradient) mapping as
\[
\widehat{\mathcal G}_{\beta}^{(m)}(\bm x;\bm q)
\;\coloneqq\;
\frac{1}{\beta}\Big(\bm x-\proj_{\cX}\big(\bm x-\beta\,\widehat{\nabla} f_{\bm q}^{(m)}(\bm x)\big)\Big).
\]
If the batch is drawn from a reference distribution $\bm r\in\mathcal Q$ (e.g., fixed throughout an inner loop), an unbiased estimator of $\nabla f_{\bm q}(\bm x)$ is
\[
\widehat{\nabla} f_{\bm q}^{(m)}(\bm x\mid \bm r)
\;\coloneqq\;
\frac{1}{m}\sum_{r=1}^m \frac{q_{\xi_r}}{r_{\xi_r}}\,\nabla F_{\xi_r}(\bm x),
\]
and the corresponding certificate is
\[
\widehat{\mathcal G}_{\beta}^{(m)}(\bm x;\bm q\mid \bm r)
\;\coloneqq\;
\frac{1}{\beta}\Big(\bm x-\proj_{\cX}\big(\bm x-\beta\,\widehat{\nabla} f_{\bm q}^{(m)}(\bm x\mid \bm r)\big)\Big).
\]

\Cref{alg:inexact-easi-stoch} is the practical version of \Cref{alg:ShortAlg} in the stochastic setting. It mirrors the deterministic scheme, but replaces exact gradient information by mini-batch estimators and uses an importance-weighted stationarity certificate relative to the reference distribution $\bm q^k$. As in the deterministic case, the method combines a computable Solodov--Svaiter-type stopping condition in the dual variable with a stationarity test for the primal update.

\section{Numerical experiments}\label{sec:NumericalExperiments}
This section reports numerical experiments evaluating \Cref{alg:inexact-easi-det} and \Cref{alg:inexact-easi-stoch}. We benchmark their performance against the corresponding deterministic and
stochastic subgradient baselines with adaptive step sizes based on
\cite{uryasev1983adaptive}; see \Cref{alg:BaselineDet,alg:BaselineStoch} in
\Cref{app:baselines}. In the deterministic setting, we additionally compare against Shor’s $r$-algorithm \cite{Shor1998Nondifferentiable} and with the \textsc{EASIeST} variant corresponding to the
classical proximal point method proposed in \cite{kouri2022primal}, obtained
by replacing the Bregman divergence term $D_\psi(\bm q,\bm r)$ with the
quadratic term $\frac{1}{2}\|\bm q-\bm r\|_2^2$. Because the underlying optimization problem is nonsmooth, we measure computational effort by \emph{first-order oracle complexity}, i.e., the total number of oracle calls returning a function value and a (sub)gradient. Additionally, we report the suboptimality gap
\[
f(\bm x^k)-f^\star,
\]
where \(f^\star\) denotes the reference optimal value, versus the cumulative
number of per-scenario function evaluations and gradient evaluations separately.
This distinction is important for certain classes of problems, such as
PDE-constrained optimization.
\subsection{Support Vector Classification}\label{subsec:SVC}
We consider binary classification with training samples\\ \(\{(y_i, \bm z_i)\}_{i=1}^n\), where
$\bm z_i \in\mathbb{R}^d$ are feature vectors and $y_i\in\{-1,+1\}$ are class labels.
Let the \emph{extended} feature vector be $\bar{\bm z}_i\coloneqq (1, \bm z_i^\top)^\top\in\mathbb{R}^{d+1}$, then define the decision vector
\[
\bm x \coloneqq (x_0,\ldots, x_d)^\top\in\mathbb{R}^{d+1}.
\]
Our goal is to learn a linear decision rule parameterized by $\bm x\in\mathbb{R}^{d+1}$,
\[
a(\bm x,\bar{\bm z}) \coloneqq \operatorname{sign}(\bm x^\top \bar{\bm z}),
\]
which assigns to each observation $i$ with extended feature vector $\bar{\bm z}_i\in\mathbb{R}^{d+1}$ a predicted class
label $a(\bm x,\bar{\bm z}_i)\in\{-1,+1\}$.
For each sample $i$, we define the signed margin loss
\[
F_i(\bm x)\coloneqq -\,y_i\,\bm x^\top \bar{\bm z}_i,
\qquad i=1,\ldots,n,
\]
and write $F(\bm x,\omega)$ as the random variable taking values $\{F_i(\bm x)\}_{i=1}^n$ under the baseline
distribution $\bm p = (p_1,\ldots, p_n)$ (in our experiments we take $p_i=1/n$).
To promote robustness to misclassified or low-margin points, we replace the mean loss with the tail risk
$\CVaR_\alpha(F(\bm x,\omega))$. With $\ell_2$ regularization, the resulting CVaR-SVM model (cf. \cite{takedasvm}) is
\begin{equation}\label{eq:cvar_svm_primal}
\min_{\bm x\in\mathbb{R}^{d+1}}
\Big\{ f(\bm x):=
\CVaR_\alpha\big(F(\bm x,\omega)\big)
+\frac{\lambda}{2}\sum_{i-1}^dx_i^2
\Big\},
\end{equation}
where $\lambda>0$ controls regularization and is taken to be $10^{-3}$ in the numerical experiments.

\paragraph{Data} In our numerical experiments, we fix a decision vector $\bm x=(0,x_1,\ldots,x_d)\in\mathbb{R}^{d+1}$ with $d=85$ and generate $n=2\times 10^4$ feature vectors $\bm z_i\stackrel{\mathrm{iid}}{\sim}\mathcal{N}(\mathbf 0,I_d)$. 
Let $\bar{\bm z}_i\coloneqq (1, \bm z_i^\top)^\top\in\mathbb{R}^{d+1}$ denote the extended feature vector. 
Given $\bm x$ and $\bar{\bm z}_i$, the class label is assigned by
\[
y_i=\operatorname{sign}(\bm x^\top \bar{\bm z}_i).
\]
\subsubsection{Deterministic case} In this setting, we solve the CVaR-SVM problem \eqref{eq:cvar_svm_primal} using \Cref{alg:inexact-easi-det}, Shor's \(r\)-algorithm (implemented following the recommendations in \cite{Stetsyuk2017}), the baseline method \Cref{alg:BaselineDet}, and the variant corresponding to the classical proximal point method proposed in \cite{kouri2022primal}. We abbreviate the latter by \(\mathrm{PD}\), short for \emph{primal-dual}, following the terminology of \cite{kouri2022primal}. However, rather than implementing that method exactly as in the original paper, we employ an implementation consistent with \textsc{EASIeST}, where a quadratic prox term replaces the Bregman prox term with necessary adjustments for handling inexactness (cf. \Cref{rem:PG_stability_l2}). We denote by $f_j = f(\bm x^j)$ the objective value at iterate $j$, and as the optimal CVaR value $f^\star = f(\bm x^\star)$, we use the value reported by the PSG solver \textsc{van}~\cite{PSG} with precision parameter $7$.
\paragraph{Parameter settings for \Cref{alg:inexact-easi-det} and PD} We set $\gamma_0=1 (\gamma_0=10^{-4} \text{ for PD}), \ c_\gamma = 1.08, \ \varepsilon_g = 10^{-6}, \ \varepsilon_{TV} = 10^{-5}, \varepsilon_{q} = 10^{-10}, |\mathcal{B}|=n,  \ K=130,  \ J=600,$
$$\rho_k^2= \begin{cases*}
10^{-3}& \text{if}\;$\alpha$=0.9 \\
3\times10^{-4}& \text{if}\;$\alpha$=0.95 \\
10^{-4}& \text{if}\;$\alpha$=0.98 
\end{cases*}
$$
As the subproblem solver, we use Algorithm~2 (adaptive accelerated gradient descent heuristic) from \cite{malitsky2019adaptive}.

\begin{table}[h]
\scriptsize
\caption{Convergence in oracle calls at prescribed accuracy levels.}
\label{tab:convergence_calls_merged}
\centering
\begin{tabular}{lcccccccccccc}
\toprule
& \multicolumn{4}{c}{$\alpha=0.90$} & \multicolumn{4}{c}{$\alpha=0.95$} & \multicolumn{4}{c}{$\alpha=0.98$} \\
\cmidrule(lr){2-5}\cmidrule(lr){6-9}\cmidrule(lr){10-13}
Gap
& \textsc{EASIeST} & $r$-alg & Baseline & PD
& \textsc{EASIeST} & $r$-alg & Baseline & PD
& \textsc{EASIeST} & $r$-alg & Baseline & PD \\
\midrule
$10^{-1}$ & 53  & 133 & 206 &62
         & 44  & 174 & 183 &26
         & 8   & 85 & 321 &103 \\
$10^{-2}$ & 80  & 182 & 539 &94
         & 117 & 306 & 335 &68
         & 14  & 330 & 444 &251 \\
$10^{-3}$ & 113 & 308 & 1454&137
         & 196 & 450 & 616&208
         & 71  & 473 & 880&510\\
$10^{-4}$ & 192 & 465 & $>$ 3000&273
         & 380 & 599 & 1486&447
         & 314 & 632 & ---&1045 \\
$10^{-5}$ & 423 & 659 & ---&494
         & 722 & 791 & ---&790
         & 602 & 762 & ---&--- \\
$10^{-6}$ & 707 & --- & ---&767
         & 1290 & --- & ---&987
         & 684 & 809 & ---&--- \\
\bottomrule
\end{tabular}
\end{table}
\begin{table}[h]
\scriptsize
\caption{Convergence in (per-scenario) function evaluations ($\times 10^{6}$) at prescribed accuracy levels.}
\label{tab:convergence_fun_merged}
\centering
\begin{tabular}{lcccccccccccc}
\toprule
& \multicolumn{4}{c}{$\alpha=0.90$} & \multicolumn{4}{c}{$\alpha=0.95$} & \multicolumn{4}{c}{$\alpha=0.98$} \\
\cmidrule(lr){2-5}\cmidrule(lr){6-9}\cmidrule(lr){10-13}
Gap
& \textsc{EASIeST} & $r$-alg & Baseline & PD
& \textsc{EASIeST} & $r$-alg & Baseline & PD
& \textsc{EASIeST} & $r$-alg & Baseline & PD \\
\midrule
$10^{-1}$ & 1.06  & 2.66 & 4.12 &1.24
         & 0.8  & 3.5 & 3.7 &0.52
         & 0.16   & 1.7 & 6.42&2.06 \\
$10^{-2}$ & 1.6  & 3.64 & 10.8&1.88
         & 2.3 & 6.1 & 6.7&1.36
         & 0.28  & 6.6 & 8.88&5.02 \\
$10^{-3}$ & 2.2 & 6.2 & 29&2.74
         & 3.7 & 9 & 12.3&4.16
         & 1.42  & 9.46 & 17.6&10.2 \\
$10^{-4}$ & 3.02 & 9.3 & ---&5.46
         & 5.07 & 11.98 & 29.7&8.94
         & 3.36 & 12.6 & ---&20.9 \\
$10^{-5}$ & 3.7 & 13.2 & ---&9.88
         & 5.73 & 15.82 & ---&15.8
         & 3.77 & 15.2 & ---&--- \\
$10^{-6}$ & 4.3 & --- & ---&15.34
         & 6.36 & --- & ---&19.74
         & 3.82 & 16.2 & ---&--- \\
\bottomrule
\end{tabular}
\end{table}

\begin{table}[h]
\scriptsize
\caption{Convergence in (per-scenario) gradient evaluations ($\times 10^6$) at prescribed accuracy levels.}
\label{tab:convergence_grad_merged}
\centering
\begin{tabular}{lcccccccccccc}
\toprule
& \multicolumn{4}{c}{$\alpha=0.90$} & \multicolumn{4}{c}{$\alpha=0.95$} & \multicolumn{4}{c}{$\alpha=0.98$} \\
\cmidrule(lr){2-5}\cmidrule(lr){6-9}\cmidrule(lr){10-13}
Gap
& \textsc{EASIeST} & $r$-alg & Baseline & PD
& \textsc{EASIeST} & $r$-alg & Baseline & PD
& \textsc{EASIeST} & $r$-alg & Baseline & PD \\
\midrule
$10^{-1}$ & 0.39  & 0.27 & 0.41&0.18
         & 0.34  & 0.17 & 0.18&0.09
         & 0.09   & 0.03 & 0.13&0.21 \\
$10^{-2}$ & 0.48  & 0.36 & 1.08&0.25
         & 0.51 & 0.31 & 0.34&0.17
         & 0.17  & 0.13 & 0.18&0.31 \\
$10^{-3}$ & 0.56 & 0.61 & 2.9&0.34
         & 0.64 & 0.45 & 0.62&0.34
         & 0.32  & 0.19 & 0.35&0.44 \\
$10^{-4}$ & 0.74 & 0.93 & ---&0.62
         & 0.85 & 0.6 & 1.47&0.59
         & 0.48 & 0.25 & ---&0.69 \\
$10^{-5}$ & 1.21 & 1.32 & ---&1.07
         & 1.21 & 0.79 & ---&0.95
         & 0.61 & 0.3 & ---&--- \\
$10^{-6}$ & 1.79 & --- & ---&1.63
         & 1.8 & --- & ---&1.15
         & 0.65 & 0.32 & ---&--- \\
\bottomrule
\end{tabular}
\end{table}

Across all three risk levels $\alpha\in\{0.90,0.95,0.98\}$, \Cref{tab:convergence_calls_merged}
shows that \textsc{EASIeST}, Shor's $r$-algorithm, and PD all substantially outperform the Baseline
method in terms of oracle calls. Overall, \textsc{EASIeST} is the most robust method, being the only
one that reaches gap $10^{-6}$ for all three values of $\alpha$. For $\alpha=0.90$,
\textsc{EASIeST} is the best performer across all reported gap levels, requiring, for instance,
$423$ oracle calls to reach gap $10^{-5}$ and $707$ calls to reach gap $10^{-6}$, compared with
$494$ and $767$ for PD, respectively. For $\alpha=0.95$, PD is particularly competitive: it is the
best performer at gaps $10^{-1}$ and $10^{-2}$, requiring only $26$ and $68$ oracle calls,
respectively, compared with $44$ and $117$ for \textsc{EASIeST}, and it also reaches gap $10^{-6}$
in $987$ calls, improving on the $1290$ calls required by \textsc{EASIeST}. At the same time,
\textsc{EASIeST} remains slightly better at the intermediate tighter levels $10^{-3}$--$10^{-5}$.
For the most risk-averse case\footnote{We remark that for $\alpha=0.98$ the PSG solver \textsc{van}
used to compute $f^\star$ reported only a feasible solution at the specified precision level $7$,
which may affect the absolute accuracy of the reported gaps in that regime.} $\alpha=0.98$,
\textsc{EASIeST} clearly outperforms the competing methods across all reported gap levels, reaching
gap $10^{-5}$ in $602$ oracle calls and gap $10^{-6}$ in $684$ calls, compared with $762$ and $809$
for the $r$-algorithm, while PD does not reach these tighter accuracies within the allotted budget.
Taken together, these results indicate that, in terms of oracle calls, all methods (except the baseline) perform at a broadly comparable level, with differences depending on the regime rather than revealing a uniformly superior method.

\Cref{tab:convergence_fun_merged,tab:convergence_grad_merged} provide a more detailed view of this
comparison in terms of cumulative (per-scenario) function and gradient evaluations (in units of
$10^6$). In terms of function evaluations, \textsc{EASIeST} is generally the most efficient method,
especially at tighter accuracies. For $\alpha=0.90$, it requires fewer function evaluations than
both PD and Shor's $r$-algorithm at every reported gap level; for example, at gap $10^{-5}$ the
counts are $3.7$ for \textsc{EASIeST}, $9.88$ for PD, and $13.2$ for the $r$-algorithm, and at
gap $10^{-6}$ they are $4.3$ and $15.34$ for \textsc{EASIeST} and PD, respectively. A similar
pattern is observed for $\alpha=0.98$, where \textsc{EASIeST} substantially improves on the
$r$-algorithm at the tightest reported levels (e.g., $3.77$ vs.\ $15.2$ at gap $10^{-5}$, and
$3.82$ vs.\ $16.2$ at gap $10^{-6}$). The main exception occurs for $\alpha=0.95$ at coarse
tolerances, where PD requires fewer function evaluations than \textsc{EASIeST} at gaps $10^{-1}$
and $10^{-2}$ ($0.52$ vs.\ $0.8$, and $1.36$ vs.\ $2.3$), although \textsc{EASIeST} becomes more
efficient from gap $10^{-3}$ onward. The lower function-evaluation counts of \textsc{EASIeST} are also partly explained by its block structure. Even in the present experiment, where the full block is used, \textsc{EASIeST}
employs block sampling \emph{without replacement} according to the current dual
probability vector $\bm q^k$. As the algorithm progresses, the probabilities
$q_i^k$ associated with non-tail scenarios become small and, in
practice, these scenarios are no longer sampled. Consequently, they stop
contributing to the cumulative function-evaluation count, whereas the competing
methods continue to evaluate the full sample at each iteration. Therefore, the savings observed in \Cref{tab:convergence_fun_merged} reflect not only convergence in oracle calls, but also the reduced effective evaluation cost induced by the sampling mechanism.

The gradient-evaluation counts present a more nuanced picture. PD is often quite competitive in this
metric and, for $\alpha=0.90$ and $\alpha=0.95$, it uses fewer gradient evaluations than
\textsc{EASIeST} at many of the reported gap levels; for instance, at $\alpha=0.95$ and gap
$10^{-6}$ the counts are $1.15$ for PD and $1.8$ for \textsc{EASIeST}. On the other hand, for
$\alpha=0.98$ the $r$-algorithm is the most economical in gradients, and \textsc{EASIeST} still
improves on PD whenever both methods reach the same target level. Thus, the tables suggest a clear
distinction between the two performance measures: \textsc{EASIeST} tends to be more efficient in
function evaluations and more reliable at tighter accuracies, whereas PD is often competitive in
gradient evaluations and can be very effective in oracle calls on some instances.

\subsubsection{Stochastic case}  
This section solves the CVAR-SVM problem \eqref{eq:cvar_svm_primal} using \Cref{alg:inexact-easi-stoch} and the baseline \Cref{alg:BaselineStoch}. Analogously to the previous section, as the optimal CVaR value $f^* = f(\bm x^*)$, we use the value reported by the PSG solver \textsc{van} with precision $7$. Note that, in the stochastic setting, PD does not admit the block-sampling mechanism used by \textsc{EASIeST}. The reason is that the iterates \(\bm{q}^k\) no longer remain in the interior of \(\mathcal{Q}\) (cf.\ \Cref{rem:PD_block_sampling}). Therefore, we exclude PD from the stochastic experiments.

To facilitate a fair comparison, we consider a \emph{constant} regime, where both \Cref{alg:inexact-easi-stoch} and the baseline \Cref{alg:BaselineStoch} are executed with the same fixed stepsize. 
In both cases, we fix $\alpha = 0.95$ and define
$
N_\alpha := (1-\alpha)n.
$
Both algorithms are then run with mini-batch size $\theta N_\alpha$, with
$
\theta \in \{0.5,0.6,0.7,0.8,0.9,1\}.
$
In this way, the batch size is scaled relative to the effective tail sample size.

The constant stepsize used in the experiments was chosen according to the standard stochastic subgradient scaling (cf. the notation in \Cref{th:convergencesubprob})
\begin{equation}\label{eq:constant_stepsize}
\beta  =  \frac{1}{C}\frac{R}{B\sqrt{T}}, \quad C \in [2,3]    
\end{equation}
where \(R\) is a problem-dependent distance scale, \(T\) is the prescribed iteration budget, and \(B\) is a bound on the second moment of the stochastic subgradients. In our implementation, we first computed \(R = \|x^0 - x^\star\|_2\) and fixed the total number of iterations to \(T=4400\). We then estimated \(B\) empirically by means of a pilot run of the baseline algorithm: specifically, we ran the baseline method for \(1000\) iterations with constant stepsize \(\beta = 0.1\) and batch size \(N_{\alpha}= (1-\alpha)n=1000\), and computed \(B\) as the root mean square of the observed stochastic subgradient norms over this run. Substituting the resulting estimate of \(B\) into the above expression yielded the constant stepsize used in the reported experiments. The constant \(C \in [2,3]\) is a stability factor introduced to make the nominal stepsize sufficiently conservative for use with smaller batch sizes.
\paragraph{Parameter settings for \Cref{alg:inexact-easi-stoch}} We set $\gamma_0=1, \ c_\gamma = 1.08, \ \varepsilon_g = 10^{-6}, \ \varepsilon_{TV} = 10^{-5}, \varepsilon_{q} = 10^{-10}, |\mathcal{B}|=8500,  \ K=20,  \ J=220, \ m = \theta N_\alpha, \ \rho_k^2 = 10^{-5}$.
As the subproblem solver, we use \eqref{eq:tupdate}--\eqref{eq:xupdate} with $\beta_j = \beta$ defined in \eqref{eq:constant_stepsize}. 

The \Cref{fig:stoch_const} shows that \textsc{EASIeST} consistently achieves a faster decay of the suboptimality gap and attains a lower final objective gap than the baseline method for all tested batch sizes. The gain is especially pronounced for larger batches, indicating that the method is particularly effective at exploiting informative tail samples.

However, this improved convergence behavior is accompanied by a higher computational cost in terms of function and gradient evaluations. Since \textsc{EASIeST} allocates a larger fraction of samples to the tail, each iteration becomes more expensive than in the baseline scheme. Consequently, the improvement observed in oracle-call complexity should be interpreted together with this increased evaluation cost. This tradeoff also suggests a natural improvement to the method in future work: \emph{adaptive batch-size} selection. Such a mechanism could reduce computational effort in the early stages of the algorithm and reserve larger, more accurate batches for later iterations, where precise tail estimation becomes more critical.

The \Cref{fig:block} indicates that block size $|\mathcal{B}|$ significantly affects convergence. While the method converges for block sizes close to $n$, reducing the block size initially accelerates convergence, with the most favorable behavior occurring around $|\mathcal B| \approx 0.4n$. In contrast, excessively small blocks lead to noticeably slower convergence. This behavior suggests that block size should not be chosen statically, and motivates the development of \emph{adaptive block-size} selection rules that could further improve the practical efficiency of the method. This observation also explains our choice of $|\mathcal B|=8500$ in the experiments reported in \Cref{fig:stoch_const}, as this value corresponds to an intermediate block size that provides a favorable trade-off between convergence speed and computational efficiency.
\begin{figure}[h]
    \centering
    \includegraphics[width=0.95\linewidth]{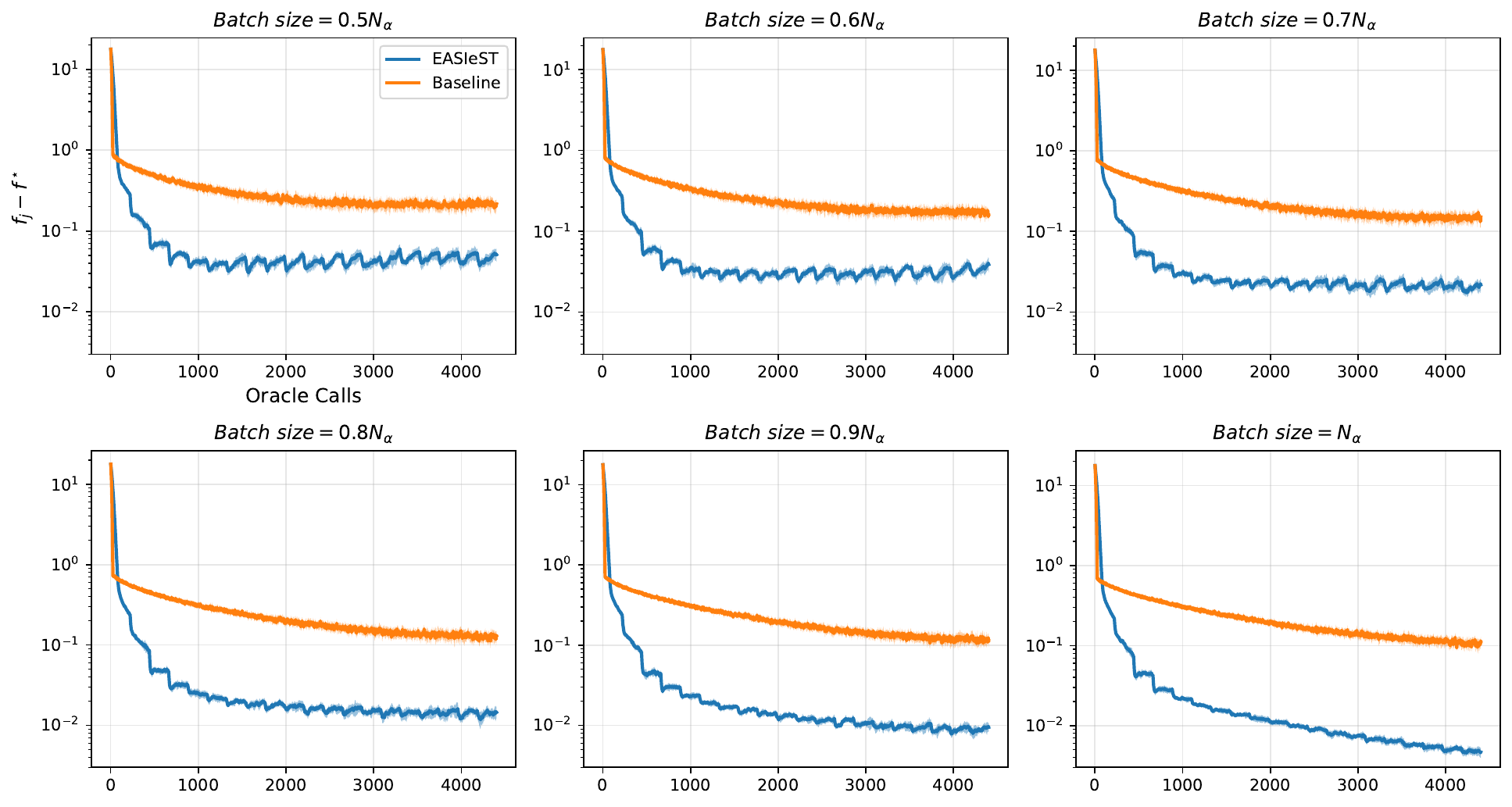}
    \caption{Comparison of the average suboptimality gap \(f_j-f^\star\), averaged over \(10\) independent runs, versus oracle calls for ``stochastic'' \textsc{EASIeST} and the baseline method under different batch sizes, ranging from \(0.5N_{\alpha}\) to \(N_{\alpha}, \ \alpha=0.95\).}
    \label{fig:stoch_const}
\end{figure}

\begin{figure}[h]
    \centering
    \includegraphics[width=0.6\linewidth]{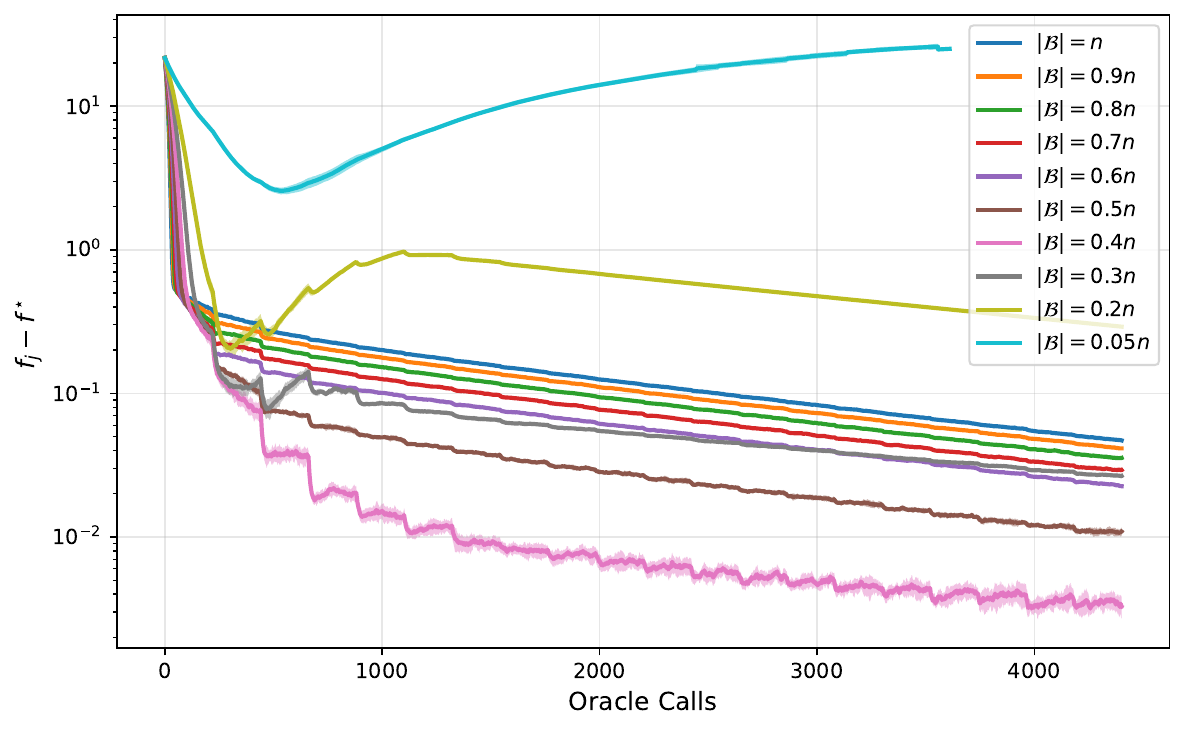}
    \caption{Effect of the block size $|\mathcal B|$ on the convergence of \Cref{alg:inexact-easi-stoch}. The figure reports the  suboptimality gap $f_j-f^*$ averaged over $10$ independent runs, versus oracle calls for for ``stochastic'' \textsc{EASIeST} with block sizes $|\mathcal B| \in \{n,0.9n,0.8n,0.7n,0.6n, 0.5n, 0.4n, 0.3n, 0.2n, (1-\alpha)n\}$. In all experiments, the mini-batch size is fixed at $N_\alpha, \ \alpha = 0.95$. }
    \label{fig:block}
\end{figure}
A possible explanation for this behavior is that moderate block sizes provide a better balance between adaptivity and stability in the update of the sampling distribution. When $|\mathcal B|$ is close to $n$, the update remains stable but may be overly conservative, since it distributes the correction across many scenarios, including those that are only weakly relevant to the active CVaR tail. In contrast, a moderately smaller block allows the method to adjust the distribution more selectively toward informative tail scenarios, which can accelerate convergence. If the block becomes too small, however, the update becomes excessively local and noisy, resulting in slower overall progress.
\subsection{Minimum-CVaR portfolio Optimization}\label{subsec:port}
We consider a portfolio of $d$ assets and a finite set of return scenarios
\(\{\bm z_i\}_{i=1}^n\), where each $\bm z_i\in\mathbb{R}^d$ represents the vector of asset returns in scenario $i$.
Let
\[
\bm x \coloneqq (x_1,\ldots,x_d)^\top\in\mathbb{R}^d
\]
denote the portfolio allocation vector, where $x_j$ is the proportion of wealth invested in asset $j$.
We restrict $\bm x$ to the simplex
\[
\cX \coloneqq \Big\{\bm x\in\mathbb{R}^d_+:\ \bm{1}^\top\bm{x} = 1\Big\},
\]
so that short selling is excluded and the budget constraint is satisfied.

For each scenario $i$, we define the portfolio loss
\[
F_i(\bm x)\coloneqq -\,\bm x^\top \bm z_i,
\qquad i=1,\ldots,n,
\]
and write $F(\bm x,\omega)$ as the random variable taking values $\{F_i(\bm x)\}_{i=1}^n$ under the baseline
distribution $\bm p=(p_1,\ldots,p_n)$ (in our experiments we take $p_i=1/n$).
To control downside risk, we minimize the conditional value-at-risk of the portfolio loss. This leads to the
minimum-CVaR portfolio optimization problem
\begin{equation}\label{eq:min_cvar_port_primal}
\min_{\bm x\in\cX}
\Big\{ f(\bm x):=
\CVaR_\alpha\big(F(\bm x,\omega)\big)
\Big\}.
\end{equation}
Unlike the classical mean--CVaR model, formulation \eqref{eq:min_cvar_port_primal} does not impose a lower bound on the expected portfolio return; instead, it seeks a portfolio that minimizes tail risk subject only to the budget and nonnegativity constraints.

\paragraph{Data}
In our numerical experiments, we fix the number of assets at $p$ and generate $n$ return scenarios $\bm z_i\in\mathbb{R}^p$, $i=1,\ldots,n$, independently from a multivariate normal distribution
\[
\bm z_i \stackrel{\mathrm{iid}}{\sim} \mathcal{N}(\bm \mu,\Sigma).
\]
The covariance matrix is constructed as
\[
\Sigma = A^\top A,
\]
where $A\in\mathbb{R}^{p\times p}$ is a random matrix with entries sampled independently from the uniform distribution on $[0,1]$. The mean vector $\bm \mu\in\mathbb{R}^p$ is generated componentwise from the uniform distribution on $[-0.1,10]$. The resulting sample
\(
\{\bm z_i\}_{i=1}^n
\)
is then used to define the portfolio loss scenarios in \eqref{eq:min_cvar_port_primal}.
\subsubsection{Deterministic case} This subsection studies the deterministic minimum-CVaR portfolio optimization problem. This experiment has two objectives. First, it demonstrates the efficiency of the proposed method in the presence of explicit constraints. Second, it compares \textsc{EASIeST} with a block version of the classical
proximal-point variant inspired by \cite{kouri2022primal}, obtained by replacing
the Bregman divergence $D_\psi(\bm q,\bm r)$ with
$\frac12\|\bm q-\bm r\|_2^2$ while imposing the same block-sampling structure
used by \textsc{EASIeST} in the full-block regime. We refer to this method as
block PD (BPD).

As the previous experiments show, the two methods are broadly comparable in terms of oracle calls. Nevertheless, PD attains this performance only at the cost of full function evaluations. In contrast, \textsc{EASIeST} retains the computational advantage of its block structure even in the full-block case: once the probabilities of certain scenarios fall to the level of machine precision, these scenarios are no longer sampled and thus stop contributing to function evaluations. PD does not possess this property. The experiment below shows that, in such a setting, BPD is not practically viable. Although this conclusion is consistent with the underlying theory, it is also important to illustrate it numerically.
As the optimal CVaR value $f^\star = f(\bm x^\star)$, we use the value reported by the PSG solver \textsc{van}~\cite{PSG} with precision parameter $10$.
\paragraph{Parameter settings for \Cref{alg:inexact-easi-det} and BPD} We set $\gamma_0=1 (\gamma_0=10^{-4} \text{ for PD}), \ c_\gamma = 1.08, \ \varepsilon_g = 10^{-6}, \ \varepsilon_{TV} = 10^{-6}, \varepsilon_{q} = 10^{-10}, |\mathcal{B}|=n,  \ K=100,  \ J=500, \ \rho_k = 10^{-4}$.
As the subproblem solver, we use Algorithm~3 (adaptive proximal gradient method) from \cite{malitsky2024adaptive}.

\begin{table}[h]
\scriptsize
\caption{Performance of BPD and \textsc{EASIeST} on the deterministic minimum-CVaR portfolio optimization problem for $\alpha=0.99$ with $p=100$ assets.}
\label{tab:port_det_pd_easi}
\centering
\begin{tabular}{ccccc}
\toprule
& \multicolumn{2}{c}{BPD} & \multicolumn{2}{c}{\textsc{EASIeST}} \\
\cmidrule(lr){2-3}\cmidrule(lr){4-5}
$n$ & Best Gap & Total Calls & Best Gap &  Total Calls \\
\midrule
1000   & $2.38 \times 10^{-3}$ &  659 & $4.82 \times 10^{-9}$ &  1509 \\
10000  & $1.69 \times 10^{-6}$ &  790 & $4.13 \times 10^{-7}$  & 892  \\
50000  & $1.68 \times 10^{-2}$ & 47  &  $1.44 \times 10^{-9}$ &  563  \\
100000 & $2.33 \times 10^{-2}$ & 52  & $5.18 \times 10^{-9}$ & 502  \\
\bottomrule
\end{tabular}
\end{table}

Table~\ref{tab:port_det_pd_easi} illustrates a limitation of BPD in the block-sampling setting. This behavior is consistent with the convergence theory developed in \Cref{th:blockproxcovfvalues,th:blockproxconvergence}, where the block-sampling distribution is required to satisfy $\pi_i^k>0$ for all scenarios $i$ and all iterations $k$. In the case of \textsc{EASIeST}, this condition is naturally satisfied by taking $\pi_i^k=q_i^k$, since the Bregman update preserves strict positivity of the dual weights. By contrast, in BPD the quadratic-prox update involves an $\ell_2$-projection, which does not guarantee positivity; i.e., some weights may be projected exactly to zero. Once a scenario receives zero weight, it is excluded from subsequent sampling, so the method can no longer revisit or correct it. This violates the requirement underlying the convergence theory and explains why BPD cannot be reliably implemented with the same block-sampling mechanism.

In contrast, \textsc{EASIeST} remains compatible with the block-sampling framework. Since its weights stay strictly positive, the induced sampling distribution remains well-defined throughout the run and continues to satisfy the assumptions of \Cref{th:blockproxcovfvalues,th:blockproxconvergence}. At the same time, scenarios whose probabilities become negligible up to machine precision effectively cease to contribute to the function-evaluation cost, so \textsc{EASIeST} retains the computational advantages of block sampling without losing convergence. This distinction is clearly reflected in \cref{tab:port_det_pd_easi}: \textsc{EASIeST} continues to attain high accuracy across all tested values of $n$, whereas BPD stagnates at relatively large gaps when block sampling is imposed.



\section{Conclusion}\label{sec:Conclusion}
We presented \textsc{EASIeST}, a new framework for CVaR optimization based on the dual representation of CVaR and a Bregman proximal point scheme on its risk envelope. The proposed approach combines two desirable features within a single algorithmic mechanism: adaptive smoothing of the nonsmooth CVaR objective and adaptive importance sampling that progressively concentrates the sampling distribution on tail scenarios while preserving interior feasibility. This yields a method that is both theoretically grounded and practically tailored to the structure of CVaR problems.

On the theoretical side, we derived the method from a saddle-point formulation of CVaR and showed that the generalized Fermi--Dirac entropy induces a natural geometry for the dual probability updates. This geometry leads to closed-form block Bregman proximal steps, keeps the iterates in the interior of the feasible risk envelope, and provides a built-in tail-learning mechanism. For convex problems, we established convergence guarantees for the inner stochastic subproblem solver, convergence in function values for the outer method, and almost sure convergence of the dual iterates under exact subproblem solutions.

Our numerical experiments demonstrate that the proposed framework is effective across representative applications in machine learning and quantitative finance. 
In the deterministic support-vector classification benchmark, \textsc{EASIeST} consistently outperformed the baseline method and was broadly competitive with Shor's $r$-algorithm in oracle calls, while requiring substantially fewer function evaluations at tighter accuracies. In the stochastic setting, \textsc{EASIeST} exhibited faster decay of the suboptimality gap than the baseline across all tested batch sizes, with especially strong performance for larger batches. At the same time, the experiments highlight an important tradeoff: the improved tail-focused sampling may increase per-iteration function and gradient costs, reflecting the price of the smoothing and importance-sampling mechanism.

These observations point to several promising directions for future work. In particular, the stochastic experiments suggest that adaptive batch-size and block-size selection rules could further improve practical efficiency by balancing early-stage exploration with later-stage tail refinement; cf.~\cite{beiser2023adaptive}.

\appendix
\section{Proof of Theorem~\ref{thm:BregmanConvergence}}
\label{app:thm_bregman_convergence}

\begin{proof}
We split the proof into two steps.

\smallskip
\noindent\textsl{Step 1.} Since $D_\psi(\bm{q}^{k+1},\bm{q}^k)\ge 0$, we have
\[
\varphi(\bm{q}^{k+1})
\le
\varphi(\bm{q}^{k+1})+\frac{1}{\gamma_k}D_\psi(\bm{q}^{k+1},\bm{q}^k).
\]
By optimality of $\bm{q}^{k+1}=\prox_{\gamma_k\varphi}(\bm{q}^k)$ in~\eqref{eq:ProxOperator},
\[
\varphi(\bm{q}^{k+1})+\frac{1}{\gamma_k}D_\psi(\bm{q}^{k+1},\bm{q}^k)
\le
\varphi(\bm{q}^{k})+\frac{1}{\gamma_k}D_\psi(\bm{q}^{k},\bm{q}^k)
=
\varphi(\bm{q}^{k}),
\]
hence $\varphi(\bm{q}^{k+1})\le \varphi(\bm{q}^k)$.

\smallskip
\noindent\textsl{Step 2.} Using the three-point identity \cite{chen1993convergence},
\[
D_\psi(\bm{s},\bm{q})+D_\psi(\bm{q},\bm{r})
=
D_\psi(\bm{s},\bm{r})
+
\big\langle \nabla\psi(\bm{r})-\nabla\psi(\bm{q}),\,\bm{s}-\bm{q}\big\rangle,
\]
and the first-order optimality condition for~\eqref{eq:ProxOperator},
\[
\big\langle \gamma g_\varphi(\bm{q})+\nabla\psi(\bm{q})-\nabla\psi(\bm{r}),\,\bm{s}-\bm{q}\big\rangle\ge 0
\quad
\text{for all }\bm{s}\in\mathcal{Q},
\]
for some $g_\varphi(\bm{q})\in\partial\varphi(\bm{q})$, we set
$\gamma=\gamma_k$, $\bm{q}=\bm{q}^{k+1}$, $\bm{r}=\bm{q}^k$, $\bm{s}=\bm{q}^\star$ to obtain
\[
D_\psi(\bm{q}^\star,\bm{q}^{k+1})+D_\psi(\bm{q}^{k+1},\bm{q}^k)
\le
D_\psi(\bm{q}^\star,\bm{q}^{k})
+
\gamma_k\langle g_\varphi(\bm{q}^{k+1}),\,\bm{q}^\star-\bm{q}^{k+1}\rangle.
\]
By convexity of $\varphi$,
$\langle g_\varphi(\bm{q}^{k+1}),\,\bm{q}^\star-\bm{q}^{k+1}\rangle
\le
\varphi(\bm{q}^\star)-\varphi(\bm{q}^{k+1})$.
Dropping the term $D_\psi(\bm{q}^{k+1},\bm{q}^k)\ge 0$ gives
\[
D_\psi(\bm{q}^\star,\bm{q}^{k+1})-D_\psi(\bm{q}^\star,\bm{q}^{k})
\le
\gamma_k\big(\varphi(\bm{q}^\star)-\varphi(\bm{q}^{k+1})\big).
\]
Summing from $k=0$ to $N-1$ and using Step 1 (non-increasing $\varphi(\bm{q}^k)$) yields
\[
D_\psi(\bm{q}^\star,\bm{q}^{N})-D_\psi(\bm{q}^\star,\bm{q}^{0})
\le
\big(\varphi(\bm{q}^\star)-\varphi(\bm{q}^{N})\big)\sum_{k=0}^{N-1}\gamma_k.
\]
Rearranging and using $D_\psi(\bm{q}^\star,\bm{q}^{N})\ge 0$ proves~\eqref{eq:BregmanConvergence}.
\end{proof}
\section{Baseline algorithms and CVaR subgradient constructions}\label{app:baselines}
For completeness, we collect here the baseline methods used in the numerical experiments together with the corresponding CVaR subgradient constructions.

\subsection{Deterministic baseline}\label{app:det_baseline}

For a given $\bm{x} \in \mathcal{X}$, let $t^\star(\bm{x}) = \var_\alpha(F(\bm{x},\omega))$ and recall that $F(\bm{x},\omega)$ is represented by the $n$-dimensional vector $(F_1(\bm{x}),\ldots, F_n(\bm{x}))$. Define
\[
I_{\diamond}(\bm x):=\{\,i:\ F_i(\bm x)\ \diamond\ t^\star(\bm x)\,\},
\qquad \diamond\in\{>,<,=\}
\]
and
\[
q_i(\bm{x}) \;=\;
\begin{cases}
\dfrac{p_i}{1-\alpha}, & i\in I_{>}(\bm{x}),\\[6pt]
\dfrac{\theta_i\,p_i}{1-\alpha}, & i\in I_{=}(\bm{x}),\\[6pt]
0, & i\in I_{<}(\bm{x}),
\end{cases}
\qquad\text{so that}\qquad
\sum_{i=1}^n q_i(\bm{x}) = 1.
\]
Then
\begin{equation}\label{eq:CVaR_subgrad}
 \partial_{\bm{x}} \cvar_\alpha(F(\bm{x},\omega))
 :=
 \Big\{
 g(\bm{x}) = \sum_{i=1}^n q_i(\bm{x})g_{F_i(\bm{x})}
 \ \text{for all feasible choices of $0 \leq \theta_i \leq 1$}
 \Big\},
\end{equation}
where $g_{F_i(\bm{x})} \in \partial F_i(\bm{x})$.

\begin{algorithm2e}[h]
\DontPrintSemicolon
\caption{\label{alg:BaselineDet}The deterministic baseline.}
\SetKwInOut{Input}{Input}
\SetKwInOut{Output}{Output}
\Input{$\bm{x}^0,\bm{x}^{-1} \in \mathcal{X}, \ u>1, \ d\in (0,1), \ \beta_0, \varepsilon >0, \ K \in \mathbb{N}$.}
\Output{an approximate solution $\mathbold{x}^\star\in\mathcal{X}$.}
\For{$k=0,1,2,\ldots,K$}{
\textit{Step 1.} Compute $g(\bm{x}^k) \in \partial_{\bm{x}} \cvar_\alpha(F(\bm{x}^k,\omega))$; \quad (cf. \eqref{eq:CVaR_subgrad})\;

\BlankLine
\textit{Step 2.} Compute $Q_k = \beta_k \|g(\bm{x}^k)\|_2$;\;

\BlankLine
\If(\tcp*[f]{termination}){$Q_k \leq \varepsilon$}{
      \textbf{break}
    }\;

\BlankLine
\textit{Step 3.} Compute $T_k = \langle g(\bm{x}^k),\bm{x}^{k-1} - \bm{x}^k \rangle$;\;

\BlankLine
\textit{Step 4.} Adjust the stepsize
\[
\beta_{k+1}=
\begin{cases}
u \beta_k, &\text{if } T_k>0,\\[4pt]
d \beta_k, &\text{if } T_k\le 0.
\end{cases}
\]
\;

\BlankLine
\textit{Step 5.} Find the next approximation
\[
\bm{x}^{k+1} = \operatorname{proj}_{\mathcal{X}}\Big[\bm{x}^{k} - \beta_kg(\bm{x}^k)\Big].
\]
\;
}
\Return $\bm x^\star \leftarrow \bm x^{k}, \ \cvar_\alpha(F(\bm x^\star,\omega))$\;
\end{algorithm2e}

\subsection{Stochastic baseline}\label{app:stoch_baseline}
\begin{algorithm2e}[h!]
\DontPrintSemicolon
\caption{\label{alg:BaselineStoch}The stochastic baseline.}
\SetKwInOut{Input}{Input}
\SetKwInOut{Output}{Output}
\Input{$\bm{x}^0,\bm{x}^{-1} \in \mathcal{X}, \ u>1, \ d,D\in (0,1), \ \beta_0, \varepsilon >0, \ K \in \mathbb{N}, \ A_0=z_0=0$.}
\Output{an approximate solution $\mathbold{x}^\star\in\mathcal{X}$.}
\For{$k=0,1,2,\ldots,K$}{
\emph{Step 1.} Sample indices $\xi_1,\ldots,\xi_m \stackrel{\mathrm{iid}}{\sim} \bm p$;

\BlankLine
\textit{Step 2.} Compute $g(\bm{x}^k; \xi) = \sum_{j=1}^m \hat q_j(\bm x^k)\,g(\bm x^k;\xi_j);$ \quad (cf. \eqref{eq:CVaR_sub_stoch})\;

\BlankLine
\emph{Step 3.} Average the norm: $A_{k+1} = A_k + \bigl(\|g(\bm{x}^k; \xi)\|_2-A_k\bigr)D$ and set $Q_k = \beta_kA_{k+1};$

\BlankLine
\If(\tcp*[f]{termination}){$Q_k \leq \varepsilon$}{
      \textbf{break}
    }\;

\BlankLine
\textit{Step 4.} Compute $T_k = \langle g(\bm{x}^k;\xi),\bm{x}^{k-1} - \bm{x}^k \rangle$;\;

\BlankLine
\emph{Step 5.} Average the inner product: $z_{k+1} = z_k + (T_k-z_k)D;$

\BlankLine
\textit{Step 6.} Adjust the stepsize
\[
\beta_{k+1}=
\begin{cases}
u \beta_k, &\text{if } z_{k+1}>0,\\[4pt]
d \beta_k, &\text{if } z_{k+1}\le 0.
\end{cases}
\]
\;

\BlankLine
\textit{Step 7.} Find the next approximation
\[
\bm{x}^{k+1} = \operatorname{proj}_{\mathcal{X}}\Big[\bm{x}^{k} - \beta_kg(\bm{x}^k; \xi)\Big].
\]
\;
}
\Return $\bm x^\star \leftarrow \bm x^{k}, \ \cvar_\alpha(F(\bm x^\star,\omega))$\;
\end{algorithm2e}
Let $\xi_1,\ldots,\xi_m \stackrel{\mathrm{iid}}{\sim} \bm p$ be scenario indices (sampled with replacement) and define
\[
F^{(m)}(\bm x)\coloneqq \big(F_{\xi_1}(\bm x),\ldots,F_{\xi_m}(\bm x)\big)^\top .
\]
Define the empirical $\alpha$-quantile
\[
t_m^\star(\bm x)\in\arg\min_{t\in\mathbb{R}}
\left\{
t+\frac{1}{(1-\alpha)m}\sum_{j=1}^m\big(F_{\xi_j}(\bm x)-t\big)_+
\right\},
\]
equivalently, $t_m^\star(\bm x)$ is an $\alpha$-quantile of $\{F_{\xi_j}(\bm x)\}_{j=1}^m$.
Define the batch index sets
\[
J_{\diamond}(\bm x)\coloneqq\big\{\,j\in\{1,\ldots,m\}:\ F_{\xi_j}(\bm x)\ \diamond\ t_m^\star(\bm x)\,\big\},
\qquad \diamond\in\{>,<,=\}.
\]
Set the batch weights
\[
\hat q_j(\bm x)=
\begin{cases}
\dfrac{1}{(1-\alpha)m}, & j\in J_{>}(\bm x),\\[6pt]
\dfrac{\theta_j}{(1-\alpha)m}, & j\in J_{=}(\bm x),\\[6pt]
0, & j\in J_{<}(\bm x),
\end{cases}
\qquad\text{so that}\qquad
\sum_{j=1}^m \hat q_j(\bm x)=1.
\]
Compute subgradients $g(\bm x;\xi_j)\in\partial F_{\xi_j}(\bm x)$. Then, a mini-batch stochastic CVaR subgradient is
\begin{equation}\label{eq:CVaR_sub_stoch}
g(\bm x; \xi)\in \partial_{\bm x}\,\cvar_\alpha^{(m)}\big(F(\bm x,\omega)\big),
\qquad
g(\bm x, \xi)=\sum_{j=1}^m \hat q_j(\bm x)\,g(\bm x;\xi_j),
\end{equation}
for any feasible choice of the weights $\{\theta_j\}_{j\in J_{=}(\bm x)}$.
\section{Subroutines}





The following subroutine \Cref{alg:block_prox_update} is used in both the stochastic and deterministic versions of the algorithm.

\begin{algorithm2e}[h!]
\DontPrintSemicolon
\caption{\label{alg:block_prox_update}\textsc{ProxUpdate}}
\SetKwInOut{Input}{Input}
\SetKwInOut{Define}{Define}
\SetKwInOut{Output}{Output}

\Input{$\bm x^{k,j}\in\cX$, reference $\bm s^k\in \mathbb{R}^n$ and $\bm q^k \in \mathcal{Q}$, block $\mathcal B_k\subseteq\{1,\ldots,n\}$, stepsize $\gamma_k>0$, block mass $\delta_k=\sum_{i\in\mathcal B_k}q_i^k$.}
\Define{
Sigmoid map: \( \sigma_i(s)\coloneqq \dfrac{p_i}{1-\alpha}\,\dfrac{e^{s}}{1+e^{s}} .\)
}
\Output{Updated logits $\bm s^{k,j+1}$ and weights $\bm q^{k,j+1}$.}

\BlankLine
Evaluate $\{F_i(\bm x^{k,j})\}_{i\in\mathcal B_k}$\;

\BlankLine
Set
\[
\tilde s^{k,j+1}_i=
\begin{cases}
s^{k}_i+\gamma_k F_i(\bm x^{k,j}), & i\in\mathcal B_k,\\
s^{k}_i, & i\notin\mathcal B_k.
\end{cases}
\]

\BlankLine
Find $t\in\mathbb R$ such that
\[
\sum_{i\in\mathcal B_k}\sigma_i\!\big(\tilde s_i^{k,j+1}+t\big)=\delta_k.
\]

\BlankLine
Set
\[
s^{k,j+1}_i=
\begin{cases}
\tilde s^{k,j+1}_i+t, & i\in\mathcal B_k,\\
s^{k}_i, & i\notin\mathcal B_k,
\end{cases}
\qquad
q^{k,j+1}_i=
\begin{cases}
\sigma_i(s_i^{k,j+1}), & i\in\mathcal B_k,\\
q_i^k, & i\notin\mathcal B_k.
\end{cases}
\]

\BlankLine
\Return $(\bm s^{k,j+1},\bm q^{k,j+1})$\;
\end{algorithm2e}
\phantomsection
\bibliographystyle{plain}
\newpage
\bibliography{main.bib}
\end{document}